\documentclass[11pt]{amsart}

\usepackage{amssymb,amsmath,amsthm}
\usepackage{enumerate}
\usepackage{booktabs}
\usepackage{tikz}
\usetikzlibrary{arrows.meta,calc,positioning,decorations.pathreplacing}
\usepackage[colorlinks=true,linkcolor=blue,citecolor=blue,urlcolor=blue]{hyperref}

\newtheorem{theorem}{Theorem}[section]
\newtheorem{lemma}[theorem]{Lemma}
\newtheorem{proposition}[theorem]{Proposition}

\newtheorem{conjecture}[theorem]{Conjecture}
\newtheorem{mainthm}{Theorem}

\newtheorem{maincor}[mainthm]{Corollary}
\theoremstyle{definition}
\newtheorem{definition}[theorem]{Definition}
\newtheorem{example}[theorem]{Example}
\newtheorem{problem}{Problem}
\theoremstyle{remark}
\newtheorem{remark}[theorem]{Remark}

\newcommand{\I}{I}
\newcommand{\eps}{\varepsilon}
\newcommand{\Int}{\operatorname{Int}}
\newcommand{\Lat}{\Lambda}
\newcommand{\Leb}{\operatorname{Leb}}

\begin{document}

\title[Connected $\omega$-limit sets of Cournot maps]{Connected $\omega$-limit sets of Cournot maps:\\
periodic orbits, realization and almost every point}

\author{Juan Luis Garc\'{\i}a Guirao}
\address{Departamento de Matem\'atica Aplicada y Estad\'{\i}stica,
Universidad Polit\'ecnica de Cartagena, Hospital de Marina,
30203 Cartagena (Regi\'on de Murcia), Spain}
\email{juan.garcia@upct.es}

\subjclass[2020]{Primary 37E05, 37B20, 37A05; Secondary 54H20, 91A50}
\keywords{Cournot map, antitriangular map, $\omega$-limit set, connectedness, realization, metric attractor,
absolutely continuous invariant measure}

\begin{abstract}
Let $F(x,y)=(f_2(y),f_1(x))$ be a Cournot map of the unit square, $\varphi=f_2\circ f_1$ and $\chi=f_1\circ f_2$.
Linero Bas and Mu\~noz Guillermo proved that an $\omega$-limit set of $F$ with nonempty interior is connected if
and only if \textup{(C1)} its two projections are connected and \textup{(C2)}
$\omega_{F^2}(z)\cap\omega_{F^2}(F(z))\neq\emptyset$, and conjectured that this holds for every $\omega$-limit set.
We prove the conjecture for every point $z=(x,y)$ such that $\omega_\varphi(x)$ or $\omega_\chi(y)$ is finite, and
for Lebesgue almost every point $z$ of the square whenever $\varphi$ is metrically regular (every metric attractor
is a periodic orbit, a solenoid, or a cycle of intervals carrying a well-behaved absolutely continuous invariant
measure). Over a periodic orbit the $\omega$-limit set is a \emph{comb}, whose sections are rigid; when one
projection is connected it is a point, a \emph{lattice} $(P\times J)\cup(K\times Q)$ with $P\subset K$, $Q\subset J$
and $\#P=\#Q$, or an \emph{alternating comb} of $2m$ disjoint segments. Conversely, every lattice and every
alternating comb is an $\omega$-limit set of a Cournot map with piecewise linear reaction functions; the
construction rests on a lemma producing exact compositions of piecewise linear maps with prescribed values. This
answers, for this class, the question of Balibrea, Linero and C\'anovas of which nowhere dense sets are
$\omega$-limit sets of Cournot maps. For instance, the letter H is not an $\omega$-limit set at any such point, and
for metrically regular $\varphi$ the points whose $\omega$-limit set is connected, infinite, nowhere dense and not a
lattice form a Lebesgue null set.
\end{abstract}

\maketitle

\section{Introduction}\label{sec:intro}

A \emph{Cournot map} (or \emph{antitriangular map}) of the square $\I^2$, $\I=[0,1]$, is a continuous map
\[
F(x,y)=\bigl(f_2(y),\,f_1(x)\bigr),\qquad f_1,f_2\colon\I\to\I \text{ continuous}.
\]
These maps model the Cournot duopoly, in which each firm chooses its output as a reaction to the output of its
rival in the previous period \cite{Co,DM,Pu}; $f_1$ and $f_2$ are the \emph{reaction functions}. Their dynamics is
governed by the two compositions
\[
\varphi=f_2\circ f_1,\qquad \chi=f_1\circ f_2,
\]
since $F^{2n}(x,y)=(\varphi^n(x),\chi^n(y))$ for all $n\ge0$ \cite{BCL01,BLC,DM}. In particular $F^2$ is the
product map $\varphi\times\chi$, and the attractors of $F^2$ are built from attractors of $\varphi$ and $\chi$;
this observation is the basis of the study of multistability in duopoly games \cite{ABG,DM}.

For interval maps the topology of $\omega$-limit sets is completely understood: a nonempty closed set $A\subset\I$
is an $\omega$-limit set of some continuous map $\I\to\I$ if and only if $A$ is nowhere dense or a finite union of
nondegenerate closed intervals \cite{ABCP,Sh}. Balibrea, Linero and C\'anovas \cite{BLC} proved that every
$\omega$-limit set of a Cournot map is finite, infinite and nowhere dense, or a finite union of nondegenerate
periodic rectangles with pairwise disjoint interiors; they observed that, unlike the interval case, a nowhere
dense $\omega$-limit set can be connected, and left open the question of \emph{which} nowhere dense sets occur.
Linero Bas and Mu\~noz Guillermo \cite{LM} described completely the $\omega$-limit sets with nonempty interior,
constructed connected $\omega$-limit sets with empty interior (circles, chains of circles, lattices
$(\I\times P)\cup(P\times\I)$), and characterized connectedness when the interior is nonempty.

\begin{theorem}[{Linero Bas and Mu\~noz Guillermo \cite[Prop.~5]{LM}}]\label{thm:LM}
Let $F$ be a Cournot map and $z\in\I^2$, and let $\pi_1,\pi_2$ be the coordinate projections.
If $\omega_F(z)$ is connected, then
\begin{enumerate}[\upshape(C1)]
\item $\pi_1(\omega_F(z))$ and $\pi_2(\omega_F(z))$ are connected, and
\item $\omega_{F^2}(z)\cap\omega_{F^2}(F(z))\neq\emptyset$.
\end{enumerate}
Conversely, if $\Int\omega_F(z)\neq\emptyset$, then \textup{(C1)} and \textup{(C2)} imply that $\omega_F(z)$ is connected.
\end{theorem}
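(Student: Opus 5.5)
The necessity part is elementary; for the converse I would combine the Balibrea--Linero--C\'anovas structure theorem with an analysis of how $F$ permutes the connected components of $\omega_F(z)$. Throughout write $A=\omega_{F^2}(z)$ and $B=\omega_{F^2}(F(z))$. Splitting the orbit of $z$ into even and odd iterates gives $\omega_F(z)=A\cup B$. By continuity $F(A)=B$ and $F(B)=A$. Both sets are closed and nonempty.

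\emph{Necessity.} Suppose $\omega_F(z)$ is connected. Then (C1) holds because $\pi_1$ and $\pi_2$ are continuous, and continuous images of connected sets are connected. For (C2), suppose instead that $A\cap B=\emptyset$. Then $\omega_F(z)=A\cup B$ is a union of two disjoint nonempty closed sets, so it is disconnected, a contradiction. Thus this direction needs nothing beyond the decomposition above.

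\emph{Sufficiency when $\Int\omega_F(z)\neq\emptyset$.} By the result of \cite{BLC} recalled above, $\omega_F(z)$ is a finite union $R_1\cup\dots\cup R_k$ of nondegenerate periodic rectangles $R_i=I_i\times J_i$ with pairwise disjoint interiors.

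\begin{enumerate}[\upshape(1)]
\item \emph{$F$ permutes components.} Each rectangle lies in a single component, so $\omega_F(z)$ has finitely many components $C_1,\dots,C_r$, each a union of some of the $R_i$. Since $F(\omega_F(z))=\omega_F(z)$ and $F$ maps connected sets to connected sets, $F$ permutes $C_1,\dots,C_r$. Because $\omega_F(z)$ is the $\omega$-limit set of a single orbit, this permutation is a single cycle.
\item \emph{Parity of the cycle.} The even iterates of the orbit accumulate on $A$ and the odd ones on $B$. Hence $A$ is the union of the components in the orbit of some $C_1$ under $F^2$, and $B=F(A)$.
\item \emph{Use of (C2).} Suppose $r$ is even. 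Then the $F^2$-orbit of $C_1$ consists of $r/2$ components, so $A$ and $B$ are unions of disjoint families of components and $A\cap B=\emptyset$, contradicting (C2). So $r$ is odd, and a component of $A$ meets a component of $B$; as both lie in the one component family, this is an intersection rather than a new component.
\item \emph{Use of (C1).} The goal is to show $r=1$. The projection $\pi_1(\omega_F(z))=\omega_\varphi(x)\cup f_2(\omega_\chi(y))$ is an interval covered by the finitely many intervals $\pi_1(C_j)$, and similarly for $\pi_2$. Since $F(x,y)=(f_2(y),f_1(x))$, we have $\pi_1(F(C_j))=f_2(\pi_2(C_j))$, and symmetrically for $\pi_2$. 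The key geometric step is that two distinct components that are images of one another under powers of $F$ must have disjoint first projections, or disjoint second projections. The idea is that the $\omega$-limit sets of $\varphi$ and $\chi$ carried by $A$ are interval cycles whose pieces are permuted, and a rectangle component is a full product of such pieces. If that step holds, connectedness of both projections forces the projection intervals to chain together. This forces adjacent rectangles to share boundary points, so $r=1$ and $\omega_F(z)$ is connected.
\end{enumerate}

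I expect step (4) to be the main obstacle. It requires understanding precisely how $A\subset\omega_\varphi(x)\times\omega_\chi(y)$ sits inside this product when $A$ has interior. The first thing I would try is to use that $F^2=\varphi\times\chi$ restricted to $A$ is transitive, so its interior rectangles project onto the periodic intervals of the interval cycles of $\varphi$ and $\chi$.
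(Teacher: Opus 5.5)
The paper does not prove this statement; it quotes it from \cite{LM}, so your argument has to stand on its own. Necessity is complete and correct. Steps (1)--(3) of the converse are also sound: $F$ permutes the finitely many components $C_1,\dots,C_r$ in one cycle, and for even $r$ the sets $A$ and $B$ lie in complementary families of components, so (C2) forces $r$ to be odd. There is one small inaccuracy. For odd $r$ you only get $A\subset\bigcup_kF^{2k}(C_1)$, not equality. For instance, take $f_2=\mathrm{id}$ and $f_1$ transitive but exchanging $[0,\frac12]$ and $[\frac12,1]$. A generic point then gives $A=[0,\frac12]^2\cup[\frac12,1]^2$ and $B=([0,\frac12]\times[\frac12,1])\cup([\frac12,1]\times[0,\frac12])$, with $r=1$.

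\emph{The gap: step (4).} This step carries all the content of the converse, and the mechanism you propose cannot supply it.
\begin{itemize}
\item For components that are rectangles, ``disjoint first or disjoint second projections'' just restates disjointness.
\item Chaining of projections does not make components touch, even for an odd number of them. The squares $[k-1,k]\times J_k$, $k=1,\dots,5$, with $(J_1,\dots,J_5)=([2,3],[0,1],[3,4],[1,2],[4,5])$, are pairwise disjoint, yet both projections of their union are $[0,5]$.
\end{itemize}
So the geometry of finitely many rectangles, together with (C1) and the oddness of $r$, cannot yield $r=1$. You need an arithmetic restriction on $r$ coming from the dynamics.

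\emph{One way to obtain it.}
\begin{enumerate}
\item[(i)] Suppose, say, $\Int A\neq\emptyset$. Then $\omega_\varphi(x)$ and $\omega_\chi(y)$ are transitive cycles of intervals, with $a$ and $b$ mixing pieces respectively (Theorem~\ref{thm:BM}). Moreover $A$ is the $F^2$-orbit of a single product $X\times Y$ of such pieces. Put $N=\operatorname{lcm}(a,b)$. Then $F^{2N}$ fixes each of the connected sets $F^j(X\times Y)$, whose union is $\omega_F(z)$. So $F^{2N}$ fixes every component, and $r\mid 2N$.
\item[(ii)] The sets $\omega_\varphi(x)$ and $\omega_\varphi(f_2(y))$ are transitive cycles of intervals of the same map, so they either coincide or have disjoint interiors.
\begin{itemize}
\item If they coincide, (C1) makes $\omega_\varphi(x)$ and $\omega_\chi(y)=f_1(\omega_\varphi(x))$ single intervals.
\item If their interiors are disjoint, let $K_0,\dots,K_{\alpha-1}$ be the components of $\omega_\varphi(x)$ and $K^*_0,\dots,K^*_{\beta-1}$ those of $\omega_\varphi(f_2(y))$. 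By (C1) they alternate along an interval. Hence $|\alpha-\beta|\le1$, and there are exactly $\alpha+\beta-1$ adjacent pairs $(K_i,K^*_j)$.
\item Since $\varphi$ maps adjacent pairs to adjacent pairs, these pairs form a union of orbits of the diagonal shift of $\mathbb{Z}_\alpha\times\mathbb{Z}_\beta$. Each orbit has size $\operatorname{lcm}(\alpha,\beta)$, and this forces $\alpha,\beta\le2$.
\item Finally, $\omega_\chi(y)$ has $\beta$ components, because $f_2$ and $f_1$ map $\omega_\chi(y)$ and $\omega_\varphi(f_2(y))$ onto each other.
\end{itemize}
\end{enumerate}
Hence $a,b\in\{1,2,4\}$, so $r$ divides $8$, and oddness gives $r=1$.
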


\begin{conjecture}[{\cite[\S6]{LM}}]\label{conj:LM}
For every Cournot map $F$ and every $z\in\I^2$, $\omega_F(z)$ is connected if and only if \textup{(C1)} and
\textup{(C2)} hold.
\end{conjecture}

By Theorem~\ref{thm:LM} only sufficiency for $\omega$-limit sets with empty interior is open. We address this
problem from three sides: we prove the conjecture for all points lying over a periodic orbit, we determine exactly
which sets occur there, and we prove the conjecture for Lebesgue almost every point under a metric regularity
assumption.

\subsection*{Points over periodic orbits}

\begin{mainthm}\label{thm:A}
Let $F(x,y)=(f_2(y),f_1(x))$ be a Cournot map and let $z=(x,y)\in\I^2$ be such that $\omega_{f_2\circ f_1}(x)$
or $\omega_{f_1\circ f_2}(y)$ is finite. Then $\omega_F(z)$ is connected if and only if \textup{(C1)} and
\textup{(C2)} hold.
\end{mainthm}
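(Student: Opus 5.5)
By Theorem~\ref{thm:LM}, necessity holds in general, so only sufficiency needs proof: assuming \textup{(C1)} and \textup{(C2)}, I will show $\omega_F(z)$ is connected. By symmetry (swap the roles of $f_1,f_2$ and of the coordinates) I assume $P:=\omega_\varphi(x)$ is finite, hence a periodic orbit of $\varphi$ of some period $p$.

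\emph{Step 1: the $\chi$-side limit set is finite or has controlled structure.} The plan is to write the relevant sets explicitly. Since $F^2=\varphi\times\chi$,
\[
\omega_F(z)=\omega_{F^2}(z)\cup\omega_{F^2}(F(z)),
\]
and $F(z)=(f_2(y),f_1(x))$. Because $f_1\circ\varphi=\chi\circ f_1$, the set $f_1(P)$ is a finite $\chi$-invariant set and attracts $f_1(x)$, so $Q:=\omega_\chi(f_1(x))=f_1(P)$ is a periodic orbit with $\#Q=\#P$. Similarly $K:=\omega_\varphi(f_2(y))=f_2(\omega_\chi(y))$ and $J:=\omega_\chi(y)$. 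Consequently $\omega_{F^2}(z)\subset P\times J$ and $\omega_{F^2}(F(z))\subset K\times Q$.

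\emph{Step 2: identify the sections.} Along the subsequence $n\equiv i\pmod p$, $\varphi^n(x)\to a_i\in P$. The section of $\omega_{F^2}(z)$ over $a_i$ is therefore the $\omega$-limit set of $y$ under $\chi^p$ along that residue class, i.e.\ $\omega_{\chi^p}(\chi^{i}y)$. These $p$ sets are permuted cyclically by $\chi$, and their union is $J$. Symmetrically, the section of $\omega_{F^2}(F(z))$ over $b_i=f_1(a_i)\in Q$ is $f_2$ applied to the appropriate piece. This gives the ``comb'' shape: vertical fibres over $P$ and horizontal fibres over $Q$.

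\emph{Step 3: use \textup{(C1)}.} Suppose the first projection $P\cup K$ is connected. If $K$ is finite, then $P\cup K$ is a point, so $\omega_F(z)$ is a vertical set over one point with connected projection and is hence connected. Otherwise $P\cup K=K$ is an interval. The second projection $J\cup Q$ is likewise connected, so $J$ is an interval or a point. I then want each $\chi^p$-piece $J_i$ to be an interval. The $J_i$ are $\omega$-limit sets of $\chi^p$ forming a cycle whose union is an interval. By the interval-map structure (cf.\ \cite{ABCP,Sh}), either they are all equal to $J$, or they are intervals meeting only at endpoints, or they are nowhere dense. The last case is excluded because their union is an interval, and a finite union of nowhere dense sets is nowhere dense.

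\emph{Step 4: use \textup{(C2)} to glue.} By Step 3 the set is a union of vertical segments $\{a_i\}\times J_i$ and horizontal segments $K_i\times\{b_i\}$ (with degenerate cases). Connectivity reduces to connectivity of the bipartite intersection graph, where a vertical and a horizontal segment are joined when they meet. \textup{(C2)} provides one intersection point, which joins one vertical to one horizontal segment. Applying $F$, which maps vertical fibres to horizontal ones and cyclically permutes the indices, propagates this intersection around the cycle. Since $F$ acts transitively on the components of each kind, the graph becomes connected.

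The main obstacle is Step 3: showing that every section $J_i$ is an interval rather than a disconnected or Cantor-like set. Connectedness of the projection only controls the union $J$, and this is exactly where empty interior makes the argument differ from Theorem~\ref{thm:LM}. I would first try the interval-map fact that $\omega$-limit sets of $\chi^p$ which are finite unions of intervals with union an interval permute their components. Failing that, I would handle the case where $J$ is a single interval split into $J_i$ directly, by examining how $f_2$ maps the pieces onto $K$.
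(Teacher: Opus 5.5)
Your reduction to the comb and to the bipartite intersection graph is sound, but Step~4 fails as stated. Applying $F$ to an incidence $V_i\cap H_j\neq\emptyset$ gives exactly the two moves of Lemma~\ref{lem:propagation}, $(i,j)\mapsto(j+1,i)$ and $(i,j)\mapsto(i+1,j+1)$. With $d=j-i$ this produces only the edges $V_a$--$H_{a+d}$ and $V_a$--$H_{a-d-1}$, so $V_a$ is joined to $V_{a+2d+1}$ and to nothing else. The orbit of one edge under a transitive cyclic action is a union of components permuted transitively, not necessarily a single component. For $m=3$ the pattern $\{(a,a+1):a\in\mathbb{Z}_3\}$ is closed under both moves and gives three components $V_a\cup H_{a+1}$. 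So (C2) together with the cyclic symmetry of $F$ does not yield connectedness. You need to know which incidences the dynamics actually forces, and this depends on the shape of the sections.

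This is where the paper's argument goes: Theorem~\ref{thm:A} is deduced from the dichotomy of Theorem~\ref{thm:B}. Your Step~3 must be sharpened to Proposition~\ref{prop:shapes}: $Q\subset J$, and either $M_i=J$ for all $i$, or $m$ is even and $M_i$ alternates between the two Barge--Martin halves $J_0,J_1$.
\begin{itemize}
\item The alternation comes from Lemma~\ref{lem:noinvariant} applied to $\chi^m|_J$, respectively $\chi^{2m}|_{J_k}$. Your ``intervals meeting only at endpoints'' is too vague to use.
\item The uniformity of type across the $M_i$ also needs proof (Proposition~\ref{prop:rigidity}).
\end{itemize}

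In the first case every $(i,j)$ is an incidence directly, since $q_j\in Q\subset J=M_i$ and $p_i=f_2(q_{i-1})\in f_2(J)=N_j$; (C2) is not even needed.

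In the alternating case the missing ingredient is parity invariance. Since $M_{i+2}=M_i$ and $N_{j+2}=N_j$, incidences are invariant under $(i,j)\mapsto(i+2,j)$ and $(i,j)\mapsto(i,j+2)$. Combined with the two propagation moves, a single incidence then fills all four parity classes. The paper deduces $q_i\in M_i\cap M_{i-1}=\{c\}$ for all $i$, hence $P=\{f_2(c)\}$, contradicting that $m$ is even. So (C2) actually fails there and sufficiency is vacuous in that case. Without these two ingredients, Step~4 has no valid justification.
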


If $\omega_\varphi(x)$ is finite, $\omega_F(z)$ lies in finitely many vertical and horizontal lines, so it has
empty interior; Theorem~\ref{thm:A} concerns only the open case of the conjecture. It follows from a finer
description. Suppose that $\omega_\varphi(x)=P$ is a periodic orbit $\{p_0,\dots,p_{m-1}\}$ with
$\varphi(p_i)=p_{i+1}$ (indices modulo $m$), labelled so that $\varphi^{km+i}(x)\to p_i$. Put
\[
M_i=\omega_{\chi^m}\bigl(\chi^i(y)\bigr),\quad q_i=f_1(p_i),\quad N_i=f_2(M_i),\quad Q=\{q_0,\dots,q_{m-1}\}.
\]
We show in Proposition~\ref{prop:comb} that
\begin{equation}\label{eq:comb}
\omega_F(z)=\bigcup_{i=0}^{m-1}V_i\ \cup\ \bigcup_{j=0}^{m-1}H_j,\qquad V_i=\{p_i\}\times M_i,\quad H_j=N_j\times\{q_j\},
\end{equation}
a union of $m$ \emph{vertical} and $m$ \emph{horizontal} pieces that we call a \emph{comb}. The sections $M_i$
are rigid: they are all finite of the same cardinality, all infinite and nowhere dense, or all finite unions of
the same number of intervals (Proposition~\ref{prop:rigidity}). This contrasts with triangular maps
$(x,y)\mapsto(f(x),g(x,y))$, where consecutive sections over a periodic orbit can be a Cantor set and a Cantor set
together with an interval \cite{Gu26}.

\begin{mainthm}\label{thm:B}
In the setting above, assume that $\pi_2(\omega_F(z))$ is connected. Then exactly one of the following holds.
\begin{enumerate}[\upshape(I)]
\item \textup{(Lattice)} There is a closed interval $J$, possibly degenerate, with $M_0=\dots=M_{m-1}=J$.
Then $Q\subset J$, $P\subset f_2(J)$, every vertical piece meets every horizontal piece,
\[
\omega_F(z)=(P\times J)\cup\bigl(f_2(J)\times Q\bigr)
\]
is connected, and \textup{(C2)} holds.
\item \textup{(Alternating comb)} $m$ is even and there are nondegenerate closed intervals $J_0,J_1$ with
$J_0\cap J_1=\{c\}$, $\chi(J_0)=J_1$, $\chi(J_1)=J_0$, such that the sections alternate,
$M_i=J_{(i+k)\bmod 2}$ for some $k$. Then \textup{(C2)} fails, and $\omega_F(z)$ is not connected.
\end{enumerate}
\end{mainthm}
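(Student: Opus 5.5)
The plan is to read everything off the comb description \eqref{eq:comb} (Proposition~\ref{prop:comb}) and the rigidity of the sections (Proposition~\ref{prop:rigidity}). First I record the elementary relations. Since $\chi^{km+i+1}(y)=\chi(\chi^{km+i}(y))$, we get $\chi(M_i)=M_{i+1}$. Next, $\chi(q_i)=f_1(\varphi(p_i))=q_{i+1}$, so $Q$ is a $\chi$-periodic orbit. Also $p_{i+1}=\varphi(p_i)=f_2(q_i)$. Because the $p_i$ are $m$ distinct points and $p_{i+1}=f_2(q_i)$, the $q_i$ are distinct as well. Even iterates of $z$ accumulate exactly on $\bigcup_i V_i$ and odd ones on $\bigcup_j H_j$, so $\omega_{F^2}(z)=\bigcup_i V_i$ and $\omega_{F^2}(F(z))=\bigcup_j H_j$ (up to relabelling). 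Finally, $\pi_2(\omega_F(z))=\bigcup_i M_i\cup Q$.

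Next I classify the sections using connectedness of $\pi_2$. By Proposition~\ref{prop:rigidity} there are three cases.
\begin{itemize}
\item All $M_i$ are finite. Then $\pi_2$ is a finite connected set, hence a single point $c$. So $M_i=\{c\}$ for all $i$ and $Q=\{c\}$, which is case (I) with $J$ degenerate.
\item All $M_i$ are infinite and nowhere dense. Then $\pi_2$ is nowhere dense and infinite, so it cannot be connected; this case is excluded.
\item All $M_i$ are finite unions of nondegenerate intervals. Adding finitely many points of $Q$ cannot close a gap of positive length, so $J:=\bigcup_i M_i=\omega_\chi(y)$ is a nondegenerate interval. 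By the interval-map theory this $\omega$-limit set is a cycle of intervals $K_1,\dots,K_s$ with disjoint interiors, cyclically permuted by $\chi$.
\end{itemize}

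The key combinatorial step is to show $s\le 2$. Adjacent intervals share an endpoint, so their images intersect and are again adjacent. Hence $\chi$ induces an automorphism of the path graph $K_1-\dots-K_s$ that is also a cyclic permutation of order $s$. A path has only the identity and the reversal as automorphisms, which forces $s\le2$. If $s=1$, each $M_i$ is an $\omega_{\chi^m}$-set with interior inside $J$. I expect the equality $M_i=J$ to follow because $\chi(M_i)=M_{i+1}$ and the $M_i$ cover $J$; I would confirm this via the structure of $\omega_{\chi^m}$-sets inside a transitive cycle. If $s=2$, write $K_1=J_0$, $K_2=J_1$ with $J_0\cap J_1=\{c\}$. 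Each $M_i$ is either $J$ or some $J_\ell$. Since $\chi(J)=J$ and $\chi(M_i)=M_{i+1}$ cyclically, either all $M_i=J$ (case (I)) or they alternate between $J_0$ and $J_1$, and $M_m=M_0$ forces $m$ even (case (II)).

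In case (I), any $q\in Q\setminus J$ would be an isolated point of $\pi_2$, so $Q\subset J$. Then $p_{i+1}=f_2(q_i)\in f_2(J)$ gives $P\subset f_2(J)$. Therefore $(p_i,q_j)\in V_i\cap H_j$ for all $i,j$. Every piece is connected and each vertical piece meets each horizontal piece, so $\omega_F(z)=(P\times J)\cup(f_2(J)\times Q)$ is connected, and (C2) holds.

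In case (II) the main obstacle is to prove $V_i\cap H_j=\emptyset$ for all $i,j$. This gives failure of (C2), and disconnectedness then follows from Theorem~\ref{thm:LM}, or directly because the two unions are disjoint closed sets.
\begin{itemize}
\item First, $c\notin Q$. The point $c$ is fixed by $\chi$, since $\chi$ swaps $J_0$ and $J_1$ and so $\chi(c)\in J_0\cap J_1=\{c\}$. If $q_j=c$, then $q_{j+1}=\chi(q_j)=c=q_j$, which contradicts distinctness of the $q$'s because $m\ge2$.
\item Next, $Q\subset J$ as in case (I), and $\chi$ swaps $J_0\setminus\{c\}$ and $J_1\setminus\{c\}$. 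Hence $q_j\in J_{(j+a)\bmod2}$ for a fixed $a$.
\item Now suppose $(p_i,q_j)\in V_i\cap H_j$. Then $q_j\in M_i=J_{i+k}$ gives $i+k\equiv j+a \pmod 2$.
\item Also $p_i\in N_j=f_2(M_j)$ gives $q_i=f_1(p_i)\in f_1(N_j)=M_{j+1}=J_{j+1+k}$, using $f_1(N_j)=\chi(M_j)=M_{j+1}$. Thus $i+a\equiv j+1+k \pmod 2$.
\item Adding the two congruences yields $0\equiv1\pmod 2$, a contradiction.
\end{itemize}
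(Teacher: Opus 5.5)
There is a genuine gap in the classification of the sections of type (c). The whole dichotomy rests on showing that each $M_i$ is $J$, $J_0$ or $J_1$, and that $M_i=J$ when $\chi|_J$ does not split. Your argument establishes neither.

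The bound $s\le2$ from the path-automorphism argument concerns some decomposition of the set $J=\omega_\chi(y)$ into intervals permuted by $\chi$. It says nothing about the $\omega$-limit sets $M_i$ of the power $\chi^m$. Moreover, the decomposition is not canonical. Suppose $\chi|_J$ exchanges $J_0=[0,\frac12]$ and $J_1=[\frac12,1]$ with $\chi^2$ mixing on each. Then both $\{J\}$ and $\{J_0,J_1\}$ are cycles of intervals with disjoint interiors, so your case split ``$s=1$ versus $s=2$'' does not separate (I) from (II).

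In that example, take $m=2$ and $\omega_{\chi^2}(y)=J_0$. Then $M_0=J_0$ and $M_1=J_1$ satisfy $\chi(M_i)=M_{i+1}$ and $M_0\cup M_1=J$, yet $M_i\neq J$. So your expectation that $M_i=J$ follows ``because $\chi(M_i)=M_{i+1}$ and the $M_i$ cover $J$'' is false. Likewise, for $s=2$ the claim ``each $M_i$ is either $J$ or some $J_\ell$'' is unjustified. A priori $M_i$ could be a proper subset of $J_\ell$ with nonempty interior.

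The missing ingredient is dynamical, not combinatorial.
\begin{enumerate}
\item First, $\chi|_J$ is transitive, since some iterate $w$ of $y$ lies in $J$ with $\omega_\chi(w)=J$.
\item Then Theorem~\ref{thm:BM} gives two cases. If $\chi|_J$ is mixing, then $\chi^m|_J$ is transitive. Each $M_i$ is closed, $\chi^m$-invariant and has nonempty interior, so $M_i=J$ by Lemma~\ref{lem:noinvariant}.
\item Otherwise $J=J_0\cup J_1$ with $\chi^2|_{J_\ell}$ mixing, so $\chi^{2m}|_{J_\ell}$ is transitive. Apply Lemma~\ref{lem:noinvariant} to the closed $\chi^{2m}$-invariant set $M_i\cap J_\ell$ whenever it has nonempty interior. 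This gives $M_i\in\{J,J_0,J_1\}$, after which your cyclic argument finishes.
\end{enumerate}
This is exactly how Proposition~\ref{prop:shapes} proceeds, and it makes the $s\le 2$ argument unnecessary.

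The rest of your proposal is sound, namely:
\begin{itemize}
\item the exclusion of the nowhere dense cases;
\item the lattice case;
\item case (II).
\end{itemize}
For case (II), you locate $Q$ via $c\notin Q$ and alternation, then add two parity congruences. This is a valid and shorter alternative to the paper's route through the propagation Lemma~\ref{lem:propagation} and parity invariance; it is essentially the computation of Lemma~\ref{lem:disjoint}. Your remark that disconnectedness follows directly is also correct: $\omega_{F^2}(z)$ and $\omega_{F^2}(F(z))$ are disjoint nonempty closed sets, so Theorem~\ref{thm:LM} is not needed.
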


\subsection*{Realization}
The next result determines exactly which sets occur in Theorem~\ref{thm:B}.

\begin{definition}\label{def:lattice}
Let $m\ge1$. A \emph{lattice of order $m$} is a set $\Lat=(P\times J)\cup(K\times Q)\subset\I^2$, where $J,K\subset\I$
are nondegenerate compact intervals and $P\subset K$, $Q\subset J$ are sets with exactly $m$ points each.
\end{definition}

A lattice is connected: the vertical segment $\{p\}\times J$ meets the horizontal segment $K\times\{q\}$ at $(p,q)$.
For $m=1$ the lattices are the crosses, the letters T and L and their rotations (Figure~\ref{fig:m1}).

\begin{definition}\label{def:alternating}
Let $m\ge2$ be even. An \emph{alternating comb of order $m$} is a set
\[
\Gamma=\bigcup_{i=0}^{m-1}\bigl(\{p_i\}\times J_{i}\bigr)\ \cup\ \bigcup_{i=0}^{m-1}\bigl(K_{i}\times\{q_i\}\bigr)
\qquad(\text{indices of }J,K\text{ modulo }2),
\]
where $J_0,J_1$ are nondegenerate compact intervals with $J_0\cap J_1=\{c\}$, $K_0,K_1$ are nondegenerate compact
intervals with $K_0\cap K_1=\{d\}$, and for some $s\in\{0,1\}$ the points
$p_i\in K_{i+s+1}\setminus\{d\}$ and $q_i\in J_{i+s}\setminus\{c\}$ $(0\le i<m)$
are such that $p_0,\dots,p_{m-1}$ are pairwise distinct and $q_0,\dots,q_{m-1}$ are pairwise distinct.
\end{definition}

The $2m$ segments of an alternating comb are pairwise disjoint (Lemma~\ref{lem:disjoint}), so $\Gamma$ is not
connected although its projections are intervals (Figure~\ref{fig:altcomb}).

\begin{mainthm}\label{thm:C}
Let $L\subset\I^2$ be a set with more than one point.
\begin{enumerate}[\upshape(1)]
\item $L=\omega_F(z)$ for a Cournot map $F$ and a point $z=(x,y)$ with $\omega_{f_2\circ f_1}(x)$ finite and $L$
connected if and only if $L$ is a lattice.
\item $L=\omega_F(z)$ for a Cournot map $F$ and a point $z=(x,y)$ with $\omega_{f_2\circ f_1}(x)$ finite,
$\pi_2(L)$ connected and $L$ not connected if and only if $L$ is an alternating comb.
\end{enumerate}
In both cases the reaction functions can be chosen piecewise linear, with $f_2\circ f_1$ and $f_1\circ f_2$
topologically exact on the relevant intervals, and $z$ with $x$ periodic.
\end{mainthm}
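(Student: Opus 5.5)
The proof splits into necessity, which is read off from Theorem~\ref{thm:B}, and sufficiency, which is an explicit construction; the construction is the real work.

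\emph{Necessity.} Suppose $L=\omega_F(z)$ has more than one point and $\omega_\varphi(x)$ is finite. Then $\omega_\varphi(x)$ is a periodic orbit $P$ of some period $m$, and Proposition~\ref{prop:comb} gives the comb \eqref{eq:comb}.

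In case (1), $L$ is connected, so (C1) holds by Theorem~\ref{thm:LM} and $\pi_2(L)$ is connected. Theorem~\ref{thm:B} then puts us in case (I), so $L=(P\times J)\cup(f_2(J)\times Q)$. It remains to check that this is a lattice, i.e.\ that $J$ and $K=f_2(J)$ are nondegenerate and $\#Q=m$.
\begin{itemize}
\item If $J$ were a point $\{c\}$, then $Q\subset J$ forces $Q=\{c\}$, and $P\subset f_2(J)$ forces $P$ to be a single point. So $L$ would be a single point, contradicting $\#L>1$.
\item If $J$ is nondegenerate but $f_2(J)$ is a point, then $\chi=f_1\circ f_2$ is constant on $J$. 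Since $M_i=J$ is invariant under $\chi^m$, this gives $M_i$ degenerate, a contradiction.
\item For $\#Q=m$: the points $q_i=f_1(p_i)$ are pairwise distinct, because $f_2(q_i)=p_{i+1}$ and the $p_j$ are distinct.
\end{itemize}

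In case (2), $L$ is disconnected and $\pi_2(L)$ is connected, so Theorem~\ref{thm:B} gives case (II). Take $K_\epsilon=f_2(J_\epsilon)$. Then $q_i=f_1(p_i)$ lies in $M_{i+1}$, using $\chi(M_i)=M_{i+1}$ and the comb structure. Matching the parity shift $s$ and checking $p_i\neq d$ and $q_i\neq c$ should follow from the disjointness argument already used in Theorem~\ref{thm:B} (cf.\ Lemma~\ref{lem:disjoint}): if $q_i=c$, a vertical and a horizontal piece would meet and (C2) would hold.

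\emph{Sufficiency.} Here I would construct piecewise linear $f_1,f_2$ realizing a given lattice $(P\times J)\cup(K\times Q)$. The requirements are:
\begin{itemize}
\item $f_1$ maps $p_i\mapsto q_i$ and $f_2$ maps $q_i\mapsto p_{i+1}$, for a cyclic ordering of $P$ and any bijection to $Q$. This makes $P$ a periodic orbit of $\varphi$, and we take $x=p_0$.
\item $f_2(J)=K$ and $f_1(K)=J$, with $\chi|_J=f_1\circ f_2|_J$ topologically exact on $J$.
\item The point $y$ is chosen so that $\omega_{\chi^m}(\chi^i(y))=J$ for every $i$. This is possible because $\chi^m$ is also exact, hence transitive, on $J$, so a residual set of points has dense orbit.
\end{itemize}
Proposition~\ref{prop:comb} then yields exactly the lattice. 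For the alternating comb the plan is the same, except that $\chi$ must swap $J_0$ and $J_1$ with $\chi^2$ exact on each of them, $f_2$ must map $J_\epsilon$ onto $K_\epsilon$, and the interpolation conditions on $p_i,q_i$ must follow the parity pattern.

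\emph{Main obstacle.} The hard step is building $f_1,f_2$ that interpolate finitely many prescribed values (the $p_i\mapsto q_i$, the endpoints, and $c,d$) while the composition stays topologically exact with the correct image intervals. This is the ``exact compositions of piecewise linear maps with prescribed values'' lemma. I would prove it as follows.
\begin{enumerate}
\item Choose $f_2$ on $J$ as a piecewise linear surjection onto $K$ that hits the required values and has slopes of absolute value at least $3$, using extra zigzags as needed.
\item Likewise choose $f_1$ on $K$ with slopes of absolute value at least $3$ and $f_1(K)=J$.
\item The composition $\chi$ is then piecewise linear with all slopes at least $9$ in absolute value and maps $J$ onto itself. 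A standard expansion argument then gives exactness: for any subinterval, iterates grow until they contain two adjacent turning points, and full zigzags guarantee covering.
\end{enumerate}
To avoid conflicts with the prescribed values, I would make each map cover its target fully on every small subinterval (a ``full-branch'' piecewise linear design). That already suffices for exactness.

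Finally, the points of $P$ outside $J$ must be handled by extending $f_1$ and $f_2$ piecewise linearly off these intervals, and this extension has to be compatible with the values already fixed inside them.
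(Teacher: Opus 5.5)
Your necessity argument for (1) is fine. There are two genuine gaps, however: one in the exactness step of the sufficiency construction, and one in the necessity direction of (2).

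\emph{Sufficiency: exactness of the compositions.} You impose piecewise linear surjections with the prescribed values, slopes of modulus $\ge3$, and every cell (``every small subinterval'') mapped onto the target. These conditions do not make $\chi$ exact. Your expansion argument stops when an iterate contains a full lap $[e,e']$ of $\chi$. But nothing you impose controls $\chi([e,e'])$: its endpoints are values of $f_1$ at its turning points or at turning values of $f_2$, and these can be arbitrary. Here is a counterexample. Take a short interval $A$ inside a cell of $J$, avoiding $Q$. Let $f_2$ be increasing affine of slope $3$ on $A$, with $f_2(A)$ inside a cell of $K$ avoiding $P$. Let $f_1$ map $f_2(A)$ onto $A$ by nine laps of slope $\pm3$ starting at $\min A$. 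Complete both maps outside $A$ and $f_2(A)$ with zigzags, so that every cell is still mapped onto the target. All your requirements hold, yet $\chi(A)=A$, so $\chi|_J$ is not even transitive (Lemma~\ref{lem:noinvariant}).

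What is missing is the Markov condition (G1) of Definition~\ref{def:good}. The grid consists of the prescribed points, the endpoints, and $c,d$ in the alternating case. Every grid point must be a breakpoint, and every breakpoint must be sent into the grid of the target. Then the composition maps the endpoints of each of its laps to two distinct grid points. So the image of a lap contains a whole cell, which (G3) sends onto the whole interval. This is Step~2 of Proposition~\ref{prop:exact}, and only with it does the expansion step close. A strictly full-branch design would avoid the issue. It is impossible in general, though, for example when some $p_i$ is an endpoint of $K$ and $q_i\in\Int J$: the end lap of $f_1$ cannot then cover $J$.

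\emph{Necessity in (2).} First, the claim $q_i=f_1(p_i)\in M_{i+1}$ is false in general. With $M_i=J_i$ it amounts to $p_i\in N_i$, which holds only when $s=1$. In the comb of Figure~\ref{fig:altcomb} ($s=0$) one has $q_i\in J_i\setminus\{c\}$, so $q_i\notin M_{i+1}$. The correct argument runs as follows.
\begin{enumerate}
\item $Q\subset J$ is a $\chi$-periodic orbit of period $m\ge2$, while $\chi(c)=c$. Hence $c\notin Q$ and the $q_i$ alternate, $q_i\in J_{i+s}\setminus\{c\}$.
\item Then $p_{i+1}=f_2(q_i)\in K_{i+s}$, which gives the matching parity for $P$.
\end{enumerate}

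Second, and more seriously, you never show that $K_0=f_2(J_0)$ and $K_1=f_2(J_1)$ are nondegenerate and meet in a single point $d$. This is part of Definition~\ref{def:alternating} and is not automatic, since a priori the two images could overlap in an interval. Lemma~\ref{lem:disjoint} runs in the converse direction and cannot supply it. Nondegeneracy follows from $f_1(K_a)=J_{a+1}$. The single intersection point needs dynamics:
\begin{enumerate}
\item $K=f_2(J)=\omega_\varphi(f_2(y))$ has nonempty interior, so $\varphi|_K$ is transitive.
\item The paper then applies Theorem~\ref{thm:BM}. Alternatively, $K_0\cap K_1$ is closed and $\varphi$-invariant, because $\varphi$ exchanges $K_0$ and $K_1$.
\item If that intersection were nondegenerate, Lemma~\ref{lem:noinvariant} would give $K_0=K_1$. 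Then $J_1=f_1(K_0)=f_1(K_1)=J_0$, which is absurd.
\end{enumerate}
Finally, $\varphi(d)=d$ yields $p_i\neq d$.
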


Thus the $\omega$-limit sets $\omega_F(z)$ with $\omega_\varphi(x)$ finite and $\pi_2(\omega_F(z))$ connected are
exactly the points, the lattices and the alternating combs (Table~\ref{tab:summary}). By the symmetry
$(x,y)\mapsto(y,x)$ (Remark~\ref{rem:swap}), which maps lattices to lattices, part (1) also holds with
$\omega_{f_1\circ f_2}(y)$ in place of $\omega_{f_2\circ f_1}(x)$.

\begin{maincor}\label{cor:D}
Let $F$ be a Cournot map and $z=(x,y)$ a point such that $\omega_{f_2\circ f_1}(x)$ or $\omega_{f_1\circ f_2}(y)$ is
finite. If $\omega_F(z)$ is connected and not a point, then it has finitely many maximal vertical segments and the
same number of maximal horizontal segments; all vertical segments have the same projection, all horizontal
segments have the same projection, and every vertical segment meets every horizontal one. In particular the
letter H, $(\{0,1\}\times\I)\cup(\I\times\{\frac12\})$, and the letters E and F are not $\omega$-limit sets of such
points.
\end{maincor}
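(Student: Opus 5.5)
We first reduce to the case where $\omega_{\varphi}(x)$ is finite. If instead $\omega_{\chi}(y)$ is finite, we apply the symmetry $(x,y)\mapsto(y,x)$ of Remark~\ref{rem:swap}. It conjugates $F$ to the Cournot map $(x,y)\mapsto(f_1(y),f_2(x))$ and interchanges the roles of $\varphi$ and $\chi$. The description in the statement is invariant under swapping the axes: vertical and horizontal segments are exchanged, and so are their projections. So we may assume $\omega_\varphi(x)=P$ is finite, hence a periodic orbit $\{p_0,\dots,p_{m-1}\}$. Proposition~\ref{prop:comb} then gives the comb decomposition \eqref{eq:comb}.

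Next we check that Theorem~\ref{thm:B} applies. Since $\omega_F(z)$ is connected, (C1) holds by Theorem~\ref{thm:LM}, so $\pi_2(\omega_F(z))$ is connected. Theorem~\ref{thm:B} therefore puts us in case (I) or case (II). Case (II) is excluded because there $\omega_F(z)$ is not connected. Hence we are in the lattice case:
\[
\omega_F(z)=(P\times J)\cup\bigl(f_2(J)\times Q\bigr),\qquad Q\subset J,\quad P\subset f_2(J).
\]
If $J$ is degenerate, say $J=\{c\}$, then $Q=\{c\}$, so $f_1$ is constant on $P$. Moreover $f_2(J)$ is a single point containing $P$, so $P$ is a single point, $m=1$, and $\omega_F(z)$ is a point, which is excluded by hypothesis. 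So $J$ is nondegenerate. The same argument applied to $f_2(J)$ shows it is nondegenerate, unless the set degenerates to vertical segments only.

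We now read off the structure. The maximal vertical segments are the sets $\{p\}\times J$ with $p\in P$. There are exactly $\#P=m$ of them and all have projection $J$. The maximal horizontal segments are $f_2(J)\times\{q\}$ with $q\in Q$, all with projection $f_2(J)$. Every vertical segment meets every horizontal one at the point $(p,q)$, since $q\in J$ and $p\in f_2(J)$.

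The main obstacle is the count of horizontal segments: we need $\#Q=m$, which requires $f_1$ to be injective on $P$. We argue as follows.
\begin{itemize}
\item If $q_i=q_j$, then $p_{i+1}=f_2(q_i)=f_2(q_j)=p_{j+1}$, because $\varphi=f_2\circ f_1$.
\item Since the points of the orbit are distinct, this forces $i=j$.
\end{itemize}
We must also check that the segments are genuinely maximal. If $f_2(J)$ were degenerate, the horizontal pieces would be single points lying on the vertical lines, and they would not count as segments. In that case $f_2(J)=\{p\}$ forces $P=\{p\}$ and $m=1$, so $\omega_F(z)=\{p\}\times J$. This is a single vertical segment with no horizontal segment, and it does not fit the claimed equal count. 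We handle it by appealing to Theorem~\ref{thm:C}(1), which says connected sets here with more than one point are lattices, and lattices have $K$ nondegenerate by Definition~\ref{def:lattice}. So this degenerate configuration does not arise. If it were needed directly, we would derive it from the rigidity in Proposition~\ref{prop:rigidity}.

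Finally, each of the letters H, E and F fails one of the listed properties:
\begin{itemize}
\item H, the set $(\{0,1\}\times\I)\cup(\I\times\{\frac12\})$, has two vertical segments and one horizontal segment, so the counts differ.
\item E has one vertical segment and three horizontal segments, so the counts differ.
\item F has two horizontal segments of different projections.
\end{itemize}
Hence none of these letters is an $\omega$-limit set of such a point.
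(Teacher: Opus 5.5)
Your proposal is correct and follows essentially the paper's route. The paper invokes Theorem~\ref{thm:C}(1) (with Remark~\ref{rem:swap}) at the outset, which already packages what you re-derive from Theorem~\ref{thm:B}: $\#Q=m$ and the nondegeneracy of $J$ and of $K=f_2(J)$. Note that the nondegeneracy of $K$ follows directly from $f_1(K)=f_1(N_0)=M_1=J$ (Lemma~\ref{lem:relations}(c)), not from Proposition~\ref{prop:rigidity}, so your detour through the degenerate case is unnecessary.

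The one step you assert without proof is maximality, which the paper checks in one line. A vertical line $\{t\}\times\I$ with $t\notin P$ meets $K\times Q$ in at most $m$ points. Since $Q\subset J$, the line $\{p\}\times\I$ with $p\in P$ meets the lattice exactly in $\{p\}\times J$. The horizontal case is symmetric, using $P\subset K$.
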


The letter H is connected, has connected projections and is a finite union of segments, like a lattice; the
obstruction is dynamical. The proof of Theorem~\ref{thm:C} rests on a lemma (Lemma~\ref{lem:good} and
Proposition~\ref{prop:exact}) producing piecewise linear maps between intervals, with prescribed values at
finitely many points, whose compositions are topologically exact.

\subsection*{Almost every point}
For points in general position both $\omega_\varphi(x)$ and $\omega_\chi(y)$ are usually infinite, and
Theorem~\ref{thm:A} does not apply. We show that, nevertheless, the conjecture holds for Lebesgue almost every
point when $\varphi$ is \emph{metrically regular} in the sense of Definition~\ref{def:regular}: Lebesgue almost
every $x$ has $\omega_\varphi(x)$ equal to one of finitely many attractors, each of which is a periodic orbit, a
solenoid, or a cycle of intervals carrying an absolutely continuous invariant probability equivalent to Lebesgue
measure and exact up to a finite cycle. By the work of Blokh and Lyubich, van Strien and Vargas, Ledrappier and
others \cite{BL91,BPP,Le81,vSV}, this describes the typical behaviour of smooth interval maps with non-flat
critical points, apart from wild attractors \cite{BKNS} and interval attractors without such a measure.
Hyperbolic maps, which are dense \cite{KSvS}, are metrically regular; so are many maps with chaotic attractors,
for instance $f_1=f_2=4t(1-t)$, and piecewise linear examples with coexisting periodic and chaotic attractors
(Example~\ref{ex:mixed}).

\begin{mainthm}\label{thm:E}
Let $F(x,y)=(f_2(y),f_1(x))$ be a Cournot map whose reaction functions are nonsingular
(Definition~\ref{def:nonsingular}), and assume that $\varphi=f_2\circ f_1$ is metrically regular. Then for Lebesgue
almost every $z=(x,y)\in\I^2$ at least one of the following holds:
\begin{enumerate}[\upshape(a)]
\item $\omega_\varphi(x)$ or $\omega_\chi(y)$ is finite, and $\omega_F(z)$ is a comb as in \eqref{eq:comb} or its
image under $(x,y)\mapsto(y,x)$;
\item $\pi_1(\omega_F(z))$ is not connected;
\item $\Int\omega_F(z)\neq\emptyset$.
\end{enumerate}
Consequently Conjecture~\ref{conj:LM} holds for Lebesgue almost every $z\in\I^2$, and for almost every $z$, if
$\omega_F(z)$ is connected, then it is a point, a lattice, or a finite union of periodic rectangles.
\end{mainthm}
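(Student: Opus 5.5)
We would prove Theorem~\ref{thm:E} by partitioning $\I^2$ according to which metric attractor of $\varphi$ captures $x$, and showing that on each piece one of (a)--(c) holds off a null set. By metric regularity, for a.e.\ $x$ the set $\omega_\varphi(x)$ equals one of finitely many attractors $A_1,\dots,A_r$, each a periodic orbit, a solenoid, or a cycle of intervals with an ergodic acip equivalent to Lebesgue and exact up to a finite cycle. Nonsingularity of $f_1,f_2$ (preimages of null sets are null) shows that $\chi=f_1\circ f_2$ has a corresponding description for a.e.\ $y$: since $\chi\circ f_1=f_1\circ\varphi$ and $\varphi\circ f_2=f_2\circ\chi$, we get $\omega_\chi(y)=f_1(\omega_\varphi(f_2(y)))$. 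Moreover $\{y: f_2(y)\in E\}$ is null whenever $E$ is null, so a.e.\ $y$ has $\omega_\chi(y)=f_1(A_j)$ for some $j$. By Fubini it then suffices to treat, for each pair $(i,j)$, the set of $z=(x,y)$ with $\omega_\varphi(x)=A_i$ and $\omega_\chi(y)=f_1(A_j)$.

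\emph{Case 1.} If $A_i$ is a periodic orbit, or $f_1(A_j)$ is finite, then $\omega_\varphi(x)$ or $\omega_\chi(y)$ is finite. Proposition~\ref{prop:comb}, or its swap via Remark~\ref{rem:swap}, gives (a).

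\emph{Case 2.} If $A_i$ is a solenoid, it is a Cantor set, so $\pi_1(\omega_F(z))\supseteq\omega_\varphi(x)\cup f_2(\omega_\chi(y))$ fails to be connected in general. We argue directly instead. From $F^{2n}=\varphi^n\times\chi^n$ and $F^{2n+1}(z)=(f_2(\chi^n y),f_1(\varphi^n x))$ we get
\[
\pi_1(\omega_F(z))=\omega_\varphi(x)\cup f_2(\omega_\chi(y)).
\]
Both pieces are compact, and $f_2(\omega_\chi(y))=\omega_\varphi(f_2 y)$ is an $\omega$-limit set of $\varphi$. If $f_2 y$ lies a.e.\ in the basin of $A_j$, this union is $A_i\cup A_j$. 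For a solenoid together with any attractor this union is nowhere dense and infinite, hence not connected, which gives (b); if $A_j$ is a periodic orbit we are already in Case 1.

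\emph{Case 3.} Both $A_i$ and $A_j$ are interval cycles. If $\pi_1(\omega_F(z))=A_i\cup A_j$ is disconnected we have (b). Otherwise we must show $\Int\omega_F(z)\ne\emptyset$ for a.e.\ such $z$. Here we use the acips: take $\mu$ on $A_i$ and the pushforward $\nu=(f_1)_*\mu_j$ on the cycle $f_1(A_j)$ for $\chi$. Exactness up to a finite cycle gives that the restriction of $\varphi^{k}\times\chi^{k}$ to each component pair of the cycles is ergodic with respect to $\mu\times\nu$, since a product of exact maps is exact, hence ergodic. Since both measures are equivalent to Lebesgue, Birkhoff's theorem shows that for a.e.\ $(x,y)$ the $F^{2}$-orbit is dense in a product of intervals of positive measure. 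Hence $\omega_{F^2}(z)$ contains a rectangle, and so does $\omega_F(z)$, which gives (c).

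The consequence follows immediately: in case (a) Theorem~\ref{thm:A} applies, in case (b) both sides of the conjecture fail, and in case (c) Theorem~\ref{thm:LM} applies. The classification of connected sets comes from Theorem~\ref{thm:B} and \cite{BLC}.

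The main obstacle is Case 3, and specifically controlling $\omega_\chi(y)$ for Lebesgue-typical $y$. This needs nonsingularity to transfer both the basin structure and the absolute continuity through $f_1,f_2$. We also need the finite-cycle bookkeeping to guarantee that some component pair of the product cycle is visited with positive-measure density.
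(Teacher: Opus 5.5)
\textbf{Gap in Case 2.} Your claim that a solenoid $A_i$ together with \emph{any} attractor $A_j$ gives a nowhere dense union is false when $A_j$ is a cycle of intervals. Your case split also never treats a solenoid paired with a type (I) attractor in either order, since Case 3 assumes both are interval cycles.

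This case cannot be waved away. Suppose the solenoid sat inside a single interval component of $A_j$. Then $\pi_1(\omega_F(z))=A_i\cup A_j$ could be connected, and $\omega_\varphi(x)$ and $\omega_\chi(y)=f_1(A_j)$ would both be infinite. Moreover $\omega_F(z)\subset(A_i\times f_1(A_j))\cup(A_j\times f_1(A_i))$ would have empty interior. Here $f_1(A_i)$ is nowhere dense, because $f_2(f_1(A_i))=A_i$ and $f_2$ is not constant on any interval. So none of (a)--(c) would hold.

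The missing ingredient is that a solenoid cannot meet the interior of a transitive cycle of intervals $K$:
\begin{enumerate}
\item Take $a\in A_i\cap\Int K$ in the interior of one of the intervals $L\subset K$ on which $\varphi^{2r}$ is mixing, where $r$ is the number of components of $K$.
\item Condition (S) gives a nondegenerate interval $W'=\varphi^i(W)\ni a$ of length $<\eps$ with $\varphi^p(W')\subset W'$. Nondegeneracy uses that $\varphi$ is nonsingular, and for small $\eps$ we get $W'\subset\Int L$.
\item Then $\varphi^{2rnp}(W')\subset W'$ for all $n$, while mixing forces $\varphi^{2rnp}(W')$ to contain an interval of length at least $|L|/3$ for large $n$. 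This is a contradiction.
\end{enumerate}
Hence $A_i\cap\Int K=\emptyset$. Since $A_i$ is infinite and $\partial K$ is finite, two points of $A_i$ lie in one component of $\I\setminus K$. As $A_i$ is nowhere dense, some point between them lies outside $A_i\cup K$, so $A_i\cup K$ is disconnected. Without this lemma (b) is not established in the mixed case.

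\textbf{Smaller gaps in Case 3.}
\begin{itemize}
\item Birkhoff's theorem only yields the conclusion for $(\mu\times\nu)$-almost every point of $A_i\times f_1(A_j)$. You need Lebesgue-almost every point of the much larger basin product. To pass from one to the other, note that the $F^2$-orbit of such a point enters $A_i\times f_1(A_j)$ at some time $n$. Since $F^{2n}=\varphi^n\times\chi^n$ is nonsingular on $\I^2$, the preimage of the exceptional null set is null.
\item You assert, rather than prove, that $\nu=(f_1)_*\mu_j$ is equivalent to Lebesgue measure on $f_1(A_j)$ and splits into $\chi^\ell$-exact components whose supports have nonempty interior. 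Both directions of nonsingularity are needed. $\Leb(f_1^{-1}N)=0$ gives $\nu\ll\Leb$. $\Leb(f_1(N))=0$ gives $\nu(E)>0$ for every $E\subset f_1(A_j)$ of positive measure. Exactness passes to factors.
\end{itemize}
Your use of ``product of exact is exact'' in place of the paper's ``weakly mixing times ergodic is ergodic'' is a legitimate alternative, once you pass to the common power $\ell\ell'$ and work with the component measures $\mu_t\times\nu_{t'}$ rather than $\mu\times\nu$.
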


In particular, for metrically regular $\varphi$ the connected $\omega$-limit sets with empty interior that are
neither points nor lattices, such as the circles of \cite{LM} or the letter H, can only occur on a Lebesgue null set of points. In the
language of duopoly games: for almost every pair of initial outputs, the long-run regime of the market is either a
lattice-type regime, in which one firm settles on a cycle while the other fluctuates on an interval, or a regime
of positive area, or it splits into separated pieces.

\subsection*{Organization}
Section~\ref{sec:prelim} collects preliminaries. Section~\ref{sec:comb} proves the comb structure and the rigidity of
sections, Section~\ref{sec:prop} the propagation lemma, and Section~\ref{sec:proofs} Theorems~\ref{thm:A}
and~\ref{thm:B}. Section~\ref{sec:lemma} contains the construction lemma, and Sections~\ref{sec:lattices}
and~\ref{sec:alternating} prove Theorem~\ref{thm:C} and Corollary~\ref{cor:D}. Theorem~\ref{thm:E} is proved in
Section~\ref{sec:ae}. Section~\ref{sec:examples} contains examples and Section~\ref{sec:problems} open problems.

\begin{figure}[t]
\centering
\begin{tikzpicture}[scale=2.3]
  \foreach \ox/\lab in {0/{cross},1.35/{T},2.7/{L},4.05/{H}}{
    \draw[gray!50] (\ox,0) rectangle (\ox+1,1);
    \node[below] at (\ox+0.5,-0.05) {\small \lab};}
  \draw[line width=1.8pt,blue!70] (0.45,0.1)--(0.45,0.9);
  \draw[line width=1.8pt,teal] (0.1,0.55)--(0.9,0.55);
  \draw[line width=1.8pt,blue!70] (1.85,0.1)--(1.85,0.85);
  \draw[line width=1.8pt,teal] (1.45,0.85)--(2.25,0.85);
  \draw[line width=1.8pt,blue!70] (2.8,0.1)--(2.8,0.9);
  \draw[line width=1.8pt,teal] (2.8,0.1)--(3.6,0.1);
  \draw[line width=1.8pt,gray] (4.15,0.1)--(4.15,0.9);
  \draw[line width=1.8pt,gray] (4.95,0.1)--(4.95,0.9);
  \draw[line width=1.8pt,gray] (4.15,0.5)--(4.95,0.5);
  \draw[red!70!black,line width=1pt] (4.05,0)--(5.05,1); \draw[red!70!black,line width=1pt] (4.05,1)--(5.05,0);
\end{tikzpicture}
\caption{The lattices of order $1$ are crosses, T's and L's (and their rotations): one vertical segment
$\{p\}\times J$ and one horizontal segment $K\times\{q\}$ with $p\in K$, $q\in J$. All of them are $\omega$-limit
sets of Cournot maps (Theorem~\ref{thm:C}). The letter H is not an $\omega$-limit set at any point with
$\omega_{f_2\circ f_1}(x)$ or $\omega_{f_1\circ f_2}(y)$ finite (Corollary~\ref{cor:D}), and for metrically regular
maps it is the $\omega$-limit set of at most a null set of points (Theorem~\ref{thm:E}).}
\label{fig:m1}
\end{figure}

\section{Preliminaries}\label{sec:prelim}

Let $X$ be a compact metric space and $\psi\colon X\to X$ continuous. For every $u\in X$ the set $\omega_\psi(u)$
is nonempty, closed and strongly invariant, $\psi(\omega_\psi(u))=\omega_\psi(u)=\omega_\psi(\psi(u))$, and, for every
$n\ge1$ \cite{BC},
\begin{equation}\label{eq:split}
\omega_\psi(u)=\bigcup_{j=0}^{n-1}\omega_{\psi^n}\bigl(\psi^j(u)\bigr).
\end{equation}
For interval maps, $\omega_\psi(u)$ is finite if and only if $u$ is asymptotically periodic, and then
$\omega_\psi(u)$ is a periodic orbit \cite{BC}.

The map $\psi$ is \emph{transitive} if for all nonempty open $U,V\subset X$ there is $n\ge1$ with
$\psi^n(U)\cap V\neq\emptyset$, \emph{mixing} if this holds for all large $n$, and \emph{(topologically) exact}, or
locally eventually onto, if for every nonempty open $U$ there is $n$ with $\psi^n(U)=X$. Exact maps are mixing, and
every power of an exact map is exact. If $X$ has no isolated points, $\psi$ is transitive whenever some $u$
satisfies $\omega_\psi(u)=X$, and if $\psi$ is transitive the points $u$ with $\omega_\psi(u)=X$ form a residual set
\cite{KS97}. A mixing map $\psi$ of a compact interval $L=[l,l']$ has the following property \cite{Ru}: for every
nondegenerate interval $V\subset L$ and every $\delta>0$ there is $N$ such that $\psi^n(V)\supset[l+\delta,l'-\delta]$
for all $n\ge N$.

\begin{lemma}\label{lem:noinvariant}
Let $\psi$ be a transitive map of a nondegenerate compact interval $J$, and let $A\subset J$ be closed, with
$\psi(A)\subset A$ and nonempty interior. Then $A=J$.
\end{lemma}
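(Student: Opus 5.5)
The plan is to use transitivity through a point with dense orbit. Since $\Int A\neq\emptyset$, pick a nondegenerate open interval $U\subset A$. Set $D=\bigcup_{n\ge0}\psi^n(U)$. By forward invariance, $\psi^n(U)\subset\psi^n(A)\subset A$ for every $n$, so $D\subset A$. Because $A$ is closed, it then suffices to show that $D$ is dense in $J$.

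Density of $D$ follows directly from transitivity. Let $V\subset J$ be any nonempty open set. Transitivity gives some $n\ge1$ with $\psi^n(U)\cap V\neq\emptyset$, so $D$ meets $V$. Hence $D$ is dense, and
\[
J=\overline{D}\subset A\subset J,
\]
which gives $A=J$.

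I expect no real obstacle here. The one point to watch is that the openness used in the definition of transitivity is relative to $J$, and our sets $U$ and $V$ are open in $J$, so the definition applies as stated. No result on periodic orbits or exactness is needed; in particular the mixing property quoted from \cite{Ru} is not required.
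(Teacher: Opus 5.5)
Your proof is correct and is essentially the paper's argument: the paper takes $U=\Int A$ and $V=J\setminus A$ and gets a contradiction from $\psi^n(U)\cap V\neq\emptyset$ together with $\psi^n(U)\subset A$, which is your density argument phrased by contradiction. Your opening sentence announces a point with dense orbit, but what you actually use, correctly, is the open-set definition of transitivity.
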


\begin{proof}
If $A\neq J$, then $U=\Int A$ and $V=J\setminus A$ are nonempty and open in $J$. By transitivity
$\psi^n(U)\cap V\neq\emptyset$ for some $n$, but $\psi^n(U)\subset A$.
\end{proof}

\begin{theorem}[{\cite{BM}; see also \cite{BLC,KS97}}]\label{thm:BM}
Let $\psi$ be a transitive map of a compact interval $J=[a,b]$. Then either $\psi$ is mixing, or there is
$c\in(a,b)$ such that, with $J_0=[a,c]$ and $J_1=[c,b]$, $\psi(J_0)=J_1$, $\psi(J_1)=J_0$ and $\psi^2|_{J_k}$
is mixing for $k=0,1$. In the second case $\psi(c)=c$.
\end{theorem}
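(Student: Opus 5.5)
The statement is Theorem~\ref{thm:BM} (Blokh's dichotomy for transitive interval maps). I would prove it by studying how $\psi^2$ behaves on $J$, starting from the general fact that for a transitive $\psi$ one of two things happens: either $\psi^n$ is transitive for every $n$, or some iterate permutes finitely many closed sets with disjoint interiors. In the interval setting this reduces to the following claim.

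\emph{Either every power of $\psi$ is transitive, or $\psi^2$ is not transitive and $J$ splits into two $\psi$-swapped halves.}

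To justify the claim, suppose $\psi^2$ is not transitive. Choose a nonempty open $U$ and let $A$ be the closure of $\bigcup_{n\ge0}\psi^{2n}(U)$. Then $A$ is $\psi^2$-invariant, and so is $\psi(A)$. Since $A\cup\psi(A)$ is closed, $\psi$-invariant and has nonempty interior, Lemma~\ref{lem:noinvariant} applied to $\psi$ gives $A\cup\psi(A)=J$.

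Next I would show $\operatorname{Int}(A\cap\psi(A))=\emptyset$. If not, the set $\bigcup_n\psi^{2n}(\operatorname{Int}(A\cap\psi(A)))$ would be dense in both $A$ and $\psi(A)$, and one can then check that $\psi^2$ would be transitive, a contradiction. I expect this step to be the main obstacle, because the density argument has to be organised carefully.

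Since $\psi$ is continuous and transitive on an interval, each of $A$ and $\psi(A)$ is then a single interval. The point is that components of $A$ are permuted by $\psi^2$, and a transitive map has no wandering intervals. So $A=[a,c]$ and $\psi(A)=[c,b]$, or the same with the two halves exchanged. This gives $\psi(J_0)=J_1$ and $\psi(J_1)=J_0$. Moreover $c=\psi(c')$ for the endpoint $c'$ of the opposite side, and $c\in J_0\cap J_1$ is mapped into $J_1\cap J_0=\{c\}$, so $\psi(c)=c$.

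It remains to get mixing in each case.

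\textbf{$\psi^2|_{J_k}$ is mixing in the split case.} First, $\psi^2|_{J_k}$ is transitive: $\psi$ is transitive and $\psi^2$ maps $J_k$ to itself, so for open $U,V\subset J_k$ the first hitting time must be even. Next I would show $\psi^{2n}|_{J_k}$ is transitive for all $n$. If some $\psi^4|_{J_k}$ were not transitive, we could split $J_k$ again, and $\psi$ would then cyclically permute four intervals $[a,c_1],[c_1,c],[c,c_2],[c_2,b]$. The fixed point $c$ lies in the two middle intervals, so its image must lie in both their images. This is incompatible with a 4-cycle in which the two middle intervals are not mapped to each other. Hence no further splitting occurs.

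\textbf{Every-power-transitive implies mixing.} Here I would use the interval fact that if all powers of a map are transitive, the map is mixing. The key ingredient is that for an interval map with a dense set of periodic points (which transitive maps have), total transitivity forces $\psi^n(V)$ to eventually cover any compact subinterval of $(a,b)$. This is the standard Ruette/Block–Coppel argument, using $\psi^n(V)$ intervals and the covering property.

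Finally, to see that the two alternatives are mutually exclusive: in the split case $\psi$ is not mixing, since $\psi^{n}(\operatorname{Int}J_0)\subset J_{n \bmod 2}$.
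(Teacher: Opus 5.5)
The paper does not prove this dichotomy: it quotes it from Barge--Martin and proves only $\psi(c)=c$, via $\psi(c)\in\psi(J_0)\cap\psi(J_1)=\{c\}$, exactly as you do. Your self-contained proof of the dichotomy has a genuine gap at the ``reduction'' you announce but never carry out.

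Your argument treats only the case ``$\psi^2$ not transitive''. You enter the branch ``every power is transitive, hence mixing'' as soon as $\psi^2$ is transitive. Nothing you wrote excludes $\psi^2$ transitive while, say, $\psi^3$ is not; a Banks-type decomposition would then give three cyclically permuted pieces. The same hole appears in the split case: excluding a second halving shows only that $\psi^4|_{J_k}$ is transitive. Your mixing step needs every $\psi^{2n}|_{J_k}$ to be transitive.

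A related weak point is your claim that $A$ and $\psi(A)$ are single intervals. ``Components of $A$ are permuted by $\psi^2$ and there are no wandering intervals'' gives at best finitely many components permuted cyclically. A priori $A$ could consist of $r\ge2$ intervals alternating along $J$ with $r$ intervals of $\psi(A)$.

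A smaller point: $U$ must be chosen together with an open $V$ that its $\psi^2$-orbit misses, so that $A\neq J$. With that choice, $\operatorname{Int}(A\cap\psi(A))=\emptyset$ follows simply by pulling $V$ back under an even or odd iterate. So the step you flagged as the main obstacle is actually easy.

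All three holes are closed by one combinatorial lemma, of which your four-interval argument with the fixed point $c$ is a special case. Suppose $J$ is the union of $N$ nondegenerate closed intervals with pairwise disjoint interiors, and $\psi$ maps each of them into one of them.
\begin{itemize}
\item The induced map $\tau$ on these intervals is a bijection, because $\psi(J)=J$.
\item $\tau$ is a single cycle, by Lemma~\ref{lem:noinvariant}.
\item $\tau$ preserves adjacency, since $I\cap I'\neq\emptyset$ implies $\psi(I)\cap\psi(I')\neq\emptyset$.
\end{itemize}
An adjacency-preserving bijection of a path is the identity or the reversal, so $N\le 2$.

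To apply it, take the decomposition $A_0,\dots,A_{k-1}$ into regular closed pieces for a non-transitive power $\psi^n$ (with $k\ge2$, $k\mid n$). Let $C$ be a nondegenerate component of $A_0$. Transitivity and the disjointness of interiors give some $j$ with $k\mid j$ and $\psi^j(C)\subset C$. Then $C\cup\psi(C)\cup\dots\cup\psi^{j-1}(C)=J$ by Lemma~\ref{lem:noinvariant}, so every piece is a finite union of nondegenerate intervals.

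The lemma then yields $k=2$, so $\psi^2$ is not transitive, and shows that each piece is a single interval. Applied to $\psi^2|_{J_k}$, it gives total transitivity on the halves. After that, your dense-periodic-points argument does give mixing.
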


The last assertion holds because $\psi(c)\in\psi(J_0)\cap\psi(J_1)=\{c\}$. A \emph{cycle of intervals} of an interval
map $\psi$ is a finite union $K=K_0\cup\dots\cup K_{r-1}$ of pairwise disjoint nondegenerate compact intervals with
$\psi(K_i)=K_{i+1}$ (indices modulo $r$); it is \emph{transitive} if $\psi|_K$ is. Then $\psi^r|_{K_0}$ is transitive,
and by Theorem~\ref{thm:BM} the interval $K_0$ is either itself, or the union of its two halves, a nondegenerate
interval $L$ with $\psi^{2r}(L)=L$ and $\psi^{2r}|_L$ mixing. The same holds for every component $K_i$, so $K$ is the
union of finitely many nondegenerate intervals $L$ on which $\psi^{2r}$ is mixing. An $\omega$-limit set of an interval
map with nonempty interior is a transitive cycle of intervals \cite{ABCP,Sh}.

For a Cournot map we write $\varphi=f_2\circ f_1$ and $\chi=f_1\circ f_2$. Then
\begin{equation}\label{eq:semiconj}
f_1\circ\varphi=\chi\circ f_1,\qquad f_2\circ\chi=\varphi\circ f_2 .
\end{equation}

\begin{proposition}[{\cite[Prop.~7]{BLC}}]\label{prop:BLC}
For every $z=(x,y)\in\I^2$:
\begin{enumerate}[\upshape(a)]
\item $\omega_F(z)=\omega_{F^2}(z)\cup\omega_{F^2}(F(z))$ and $\omega_{F^2}(F(z))=F(\omega_{F^2}(z))$;
\item $\pi_1(\omega_F(z))=\omega_\varphi(x)\cup\omega_\varphi(f_2(y))$ and
$\pi_2(\omega_F(z))=\omega_\chi(y)\cup\omega_\chi(f_1(x))$;
\item $f_1(\omega_\varphi(u))=\omega_\chi(f_1(u))$ and $f_2(\omega_\chi(u))=\omega_\varphi(f_2(u))$ for every $u\in\I$.
\end{enumerate}
\end{proposition}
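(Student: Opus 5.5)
We prove the three items in the order (a), (c), (b), since the argument for (b) uses (c). Throughout we rely on two facts: $F^2=\varphi\times\chi$, and $F^2(x,y)=(\varphi(x),\chi(y))$, so that $\omega_{F^2}(z)$ is the set of limits of $(\varphi^{n_k}(x),\chi^{n_k}(y))$.

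For (a), write the orbit of $F$ as the even iterates $F^{2n}(z)$ together with the odd iterates $F^{2n+1}(z)=F^{2n}(F(z))$. A limit of $F^{n_k}(z)$ has infinitely many $n_k$ of one parity, so it lies in $\omega_{F^2}(z)$ or in $\omega_{F^2}(F(z))$. The reverse inclusion is clear, and this is also \eqref{eq:split} with $n=2$. For the second equality we use continuity of $F$: if $F^{2n_k}(z)\to w$, then $F^{2n_k}(F(z))=F(F^{2n_k}(z))\to F(w)$, which gives $F(\omega_{F^2}(z))\subset\omega_{F^2}(F(z))$. For the converse, take $F^{2n_k+1}(z)\to w'$. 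Passing to a subsequence by compactness of $\I^2$, we may assume $F^{2n_k}(z)\to w$, and then $w'=F(w)$.

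For (c), the inclusion $f_1(\omega_\varphi(u))\subset\omega_\chi(f_1(u))$ follows from \eqref{eq:semiconj}. Indeed, $f_1(\varphi^n(u))=\chi^n(f_1(u))$, so if $\varphi^{n_k}(u)\to a$ then $\chi^{n_k}(f_1(u))\to f_1(a)$. For the reverse inclusion, let $\chi^{n_k}(f_1(u))\to b$. Extracting a subsequence, we may take $\varphi^{n_k}(u)\to a$, and then $b=f_1(a)$. The statement for $f_2$ is symmetric.

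For (b), we project (a). We have $\pi_1(\omega_{F^2}(z))=\{\lim\varphi^{n_k}(x)\}$ along subsequences for which the second coordinates also converge, and by compactness this set is exactly $\omega_\varphi(x)$. Similarly $\pi_1(\omega_{F^2}(F(z)))=\omega_\varphi(f_2(y))$, since $F(z)=(f_2(y),f_1(x))$. The second projection is handled in the same way. The only point requiring care throughout is this subsequence extraction, which justifies that the projection of $\omega_{\varphi\times\chi}$ equals $\omega_\varphi$; it is routine by compactness, so we expect no real obstacle.
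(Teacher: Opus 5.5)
Your proof is correct. Splitting the orbit by parity, together with continuity and a compactness extraction, gives (a); the semiconjugacy $f_1\circ\varphi^n=\chi^n\circ f_1$ with a subsequence extraction gives (c); and projecting $\omega_{F^2}$ using $F^{2n}=\varphi^n\times\chi^n$ gives (b). This is the standard argument behind \cite[Prop.~7]{BLC}; the paper gives no proof of its own, but its proof of Lemma~\ref{lem:relations}(c) points to exactly this route for (c).

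A minor point: your argument for (b) never actually uses (c), so the announced order (a), (c), (b) is unnecessary, though harmless.
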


\begin{remark}\label{rem:swap}
With $\sigma(x,y)=(y,x)$, the map $G=\sigma\circ F\circ\sigma$, $G(u,v)=(f_1(v),f_2(u))$, is a Cournot map whose
compositions are $f_1\circ f_2$ and $f_2\circ f_1$, in this order. We have $\omega_G(\sigma(z))=\sigma(\omega_F(z))$,
$\omega_{G^2}(\sigma(z))=\sigma(\omega_{F^2}(z))$ and $\omega_{G^2}(G(\sigma(z)))=\sigma(\omega_{F^2}(F(z)))$, and $\sigma$
exchanges the projections. So every statement about points with $\omega_{f_2\circ f_1}(x)$ finite has a mirror
statement for points with $\omega_{f_1\circ f_2}(y)$ finite, and conditions (C1), (C2) are invariant under $\sigma$.
\end{remark}

\section{The comb and the rigidity of its sections}\label{sec:comb}

Throughout this section $F(x,y)=(f_2(y),f_1(x))$, $z=(x,y)$, and $\omega_\varphi(x)$ is finite. Then $x$ is
asymptotically periodic and $\omega_\varphi(x)=P=\{p_0,\dots,p_{m-1}\}$ is a periodic orbit of period $m$;
we label it so that $\varphi(p_i)=p_{i+1}$ and $\varphi^{km+i}(x)\to p_i$ as $k\to\infty$ (indices modulo $m$).
We use the notation $M_i$, $q_i$, $N_i$, $Q$, $V_i$, $H_j$ of the Introduction.

\begin{lemma}\label{lem:relations}
For all $i$:
\begin{enumerate}[\upshape(a)]
\item $\chi(q_i)=q_{i+1}$ and $f_2(q_i)=p_{i+1}$;
\item $\chi(M_i)=M_{i+1}$, $\chi^m(M_i)=M_i$ and $\bigcup_iM_i=\omega_\chi(y)$;
\item $N_i=\omega_{\varphi^m}\bigl(f_2(\chi^i(y))\bigr)$, $\varphi(N_i)=N_{i+1}$ and $f_1(N_i)=M_{i+1}$.
\end{enumerate}
\end{lemma}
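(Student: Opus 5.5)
The plan is to verify each item directly from the semiconjugacies \eqref{eq:semiconj}, continuity, and the splitting formula \eqref{eq:split}. Throughout, indices are taken modulo $m$, and I use that the $p_i$ form a periodic orbit of $\varphi$ with $\varphi(p_i)=p_{i+1}$.

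For (a), I would apply \eqref{eq:semiconj} at $p_i$:
\[
\chi(q_i)=f_1(f_2(f_1(p_i)))=f_1(\varphi(p_i))=f_1(p_{i+1})=q_{i+1},
\]
and $f_2(q_i)=f_2(f_1(p_i))=\varphi(p_i)=p_{i+1}$. Nothing more is needed here.

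For (b), the first claim is $\chi(M_i)=M_{i+1}$. Since $\chi$ commutes with $\chi^m$, continuity gives $\chi(\omega_{\chi^m}(u))=\omega_{\chi^m}(\chi(u))$. The inclusion $\subset$ is immediate. For $\supset$, take a limit of $\chi^{km+1}(u)$ along a subsequence, pass to a further subsequence along which $\chi^{km}(u)$ converges, and use continuity of $\chi$. Applying this with $u=\chi^i(y)$ gives $\chi(M_i)=\omega_{\chi^m}(\chi^{i+1}(y))=M_{i+1}$. This is valid for $i=m-1$ as well, because $\omega_{\chi^m}(\chi^m(y))=\omega_{\chi^m}(y)$, so the index wrap-around is consistent. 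Iterating $m$ times gives $\chi^m(M_i)=M_i$. Finally, \eqref{eq:split} with $\psi=\chi$, $n=m$ and $u=y$ gives $\bigcup_iM_i=\omega_\chi(y)$.

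For (c), I would again use the same commuting-and-continuity argument, now with a semiconjugacy in place of commutation. From $f_2\circ\chi^m=\varphi^m\circ f_2$ one gets
\[
f_2(\omega_{\chi^m}(u))=\omega_{\varphi^m}(f_2(u)).
\]
The inclusion $\subset$ is by continuity. For $\supset$, take a limit of $\varphi^{km}(f_2(u))=f_2(\chi^{km}(u))$ along a subsequence, extract a convergent subsequence of $\chi^{km}(u)$ in the compact interval, and use continuity of $f_2$. This is essentially the argument behind Proposition~\ref{prop:BLC}(c), applied to the powers. With $u=\chi^i(y)$ it yields $N_i=\omega_{\varphi^m}(f_2(\chi^i(y)))$.

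The remaining two identities in (c) follow formally from (b):
\[
\varphi(N_i)=\varphi f_2(M_i)=f_2\chi(M_i)=f_2(M_{i+1})=N_{i+1},
\qquad
f_1(N_i)=f_1f_2(M_i)=\chi(M_i)=M_{i+1}.
\]

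No step is a real obstacle. The only point needing care is the surjectivity direction in the image-of-$\omega$-limit identities, which is the compactness-and-subsequence argument above, together with checking that index wrap-around is consistent, which holds because $\omega_{\chi^m}(\chi^m(y))=\omega_{\chi^m}(y)$.
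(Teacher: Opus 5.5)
Your proposal is correct and follows essentially the same route as the paper: the direct computation for (a), the identity $\chi(\omega_{\chi^m}(u))=\omega_{\chi^m}(\chi(u))$ together with \eqref{eq:split} for (b), and the semiconjugacy identity $f_2(\omega_{\chi^m}(u))=\omega_{\varphi^m}(f_2(u))$ for (c), with the last two identities of (c) obtained formally from (b). The only cosmetic difference is how you get $\chi^m(M_i)=M_i$: you iterate $\chi(M_i)=M_{i+1}$ around the cycle after checking the wrap-around, whereas the paper cites the strong invariance of $\omega_{\chi^m}$-limit sets directly.
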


\begin{proof}
(a) $\chi(q_i)=f_1(f_2(f_1(p_i)))=f_1(\varphi(p_i))=f_1(p_{i+1})=q_{i+1}$ and $f_2(q_i)=\varphi(p_i)=p_{i+1}$.
(b) The first two identities follow from $\chi(\omega_{\chi^m}(u))=\omega_{\chi^m}(\chi(u))$ and strong invariance,
the third from \eqref{eq:split}.
(c) The first identity follows from $f_2(\omega_{\chi^m}(u))=\omega_{\varphi^m}(f_2(u))$, which is proved like
Proposition~\ref{prop:BLC}(c) using \eqref{eq:semiconj}; then
$\varphi(N_i)=f_2(\chi(M_i))=f_2(M_{i+1})=N_{i+1}$ and $f_1(N_i)=\chi(M_i)=M_{i+1}$.
\end{proof}

\begin{proposition}[Comb structure]\label{prop:comb}
$\omega_{F^2}(z)=\bigcup_iV_i$, $\omega_{F^2}(F(z))=\bigcup_jH_j$, and \eqref{eq:comb} holds. In particular
$\Int\omega_F(z)=\emptyset$ and $\pi_2(\omega_F(z))=\omega_\chi(y)\cup Q$.
\end{proposition}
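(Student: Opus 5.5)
The plan is to compute $\omega_{F^2}(z)$ directly from the product structure $F^2=\varphi\times\chi$, and then get $\omega_{F^2}(F(z))$ as its image under $F$ using Proposition~\ref{prop:BLC}(a).

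First we find $\omega_{F^2}(z)$. By \eqref{eq:split} applied to $F^2$ with $n=m$,
\[
\omega_{F^2}(z)=\bigcup_{i=0}^{m-1}\omega_{F^{2m}}\bigl(F^{2i}(z)\bigr),\qquad F^{2i}(z)=(\varphi^i(x),\chi^i(y)),
\]
and $F^{2m}=\varphi^m\times\chi^m$. Fix $i$ and consider the orbit $F^{2mk}(F^{2i}z)=(\varphi^{km+i}(x),\chi^{km+i}(y))$. Its first coordinate converges to $p_i$, so every limit point has the form $(p_i,v)$ with $v\in\omega_{\chi^m}(\chi^i(y))=M_i$. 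Conversely, take $v\in M_i$ and a subsequence $k_j$ with $\chi^{k_jm+i}(y)\to v$. Along this subsequence the first coordinate still tends to $p_i$, so $(p_i,v)$ is a limit point. This gives $\omega_{F^{2m}}(F^{2i}z)=\{p_i\}\times M_i=V_i$, and therefore $\omega_{F^2}(z)=\bigcup_iV_i$. The main point of the argument is that convergence of the first coordinate lets the two limits decouple; the only care needed is that the subsequence is chosen from the $\chi$-orbit.

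Next we find $\omega_{F^2}(F(z))$. Proposition~\ref{prop:BLC}(a) gives $\omega_{F^2}(F(z))=F(\bigcup_iV_i)=\bigcup_iF(V_i)$. Moreover
\[
F(\{p_i\}\times M_i)=\{(f_2(v),f_1(p_i)):v\in M_i\}=f_2(M_i)\times\{q_i\}=N_i\times\{q_i\}=H_i .
\]
Combining the two pieces through Proposition~\ref{prop:BLC}(a) yields \eqref{eq:comb}.

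For the remaining assertions, $\omega_F(z)$ is contained in the finite union of lines $\bigcup_i\{p_i\}\times\I\cup\bigcup_j\I\times\{q_j\}$. Each of these lines is closed and nowhere dense, and a finite union of closed nowhere dense sets has empty interior, so $\Int\omega_F(z)=\emptyset$. Finally,
\[
\pi_2(\omega_F(z))=\bigcup_iM_i\cup Q=\omega_\chi(y)\cup Q
\]
by Lemma~\ref{lem:relations}(b). Alternatively, Proposition~\ref{prop:BLC}(b) gives this as $\omega_\chi(y)\cup\omega_\chi(f_1(x))$ with $\omega_\chi(f_1(x))=f_1(P)=Q$ by Proposition~\ref{prop:BLC}(c).
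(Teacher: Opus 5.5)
Your proof is correct and follows essentially the same route as the paper: you compute $\omega_{F^2}(z)$ from $F^{2n}(z)=(\varphi^n(x),\chi^n(y))$ using the convergence of the first coordinate along each residue class mod $m$, and then obtain $\omega_{F^2}(F(z))$ as $F(\omega_{F^2}(z))$ via Proposition~\ref{prop:BLC}(a). The only differences are cosmetic: you package the residue-class argument through \eqref{eq:split} applied to $F^2$, and you spell out the two final claims, which the paper calls immediate.
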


\begin{proof}
We have $F^{2n}(z)=(\varphi^n(x),\chi^n(y))$. Let $(a,b)=\lim_kF^{2n_k}(z)$ with $n_k\to\infty$. Passing to a
subsequence we may assume $n_k\equiv i\pmod m$; then $a=p_i$ and $b\in\omega_{\chi^m}(\chi^i(y))=M_i$.
Conversely, if $b\in M_i$, then $b=\lim_k\chi^{km_k+i}(y)$ along some $m_k\to\infty$, and along the same
subsequence $\varphi^{km_k+i}(x)\to p_i$. Hence $\omega_{F^2}(z)=\bigcup_i\{p_i\}\times M_i$. By
Proposition~\ref{prop:BLC}(a),
\[
\omega_{F^2}(F(z))=F\Bigl(\bigcup_i\{p_i\}\times M_i\Bigr)=\bigcup_i f_2(M_i)\times\{f_1(p_i)\}=\bigcup_iH_i .
\]
The last two claims are immediate.
\end{proof}

\begin{figure}[t]
\centering
\begin{tikzpicture}[scale=5.2,>=Stealth]
  \draw[gray!60] (0,0) rectangle (1,1);
  \node[below left] at (0,0) {$0$}; \node[below] at (1,0) {$1$}; \node[left] at (0,1) {$1$};
  \draw[line width=2pt,orange!80!black] (-0.03,0.15)--(-0.03,0.85);
  \node[orange!70!black,left] at (-0.04,0.5) {$J$};
  \draw[line width=2pt,orange!80!black] (0.1,-0.03)--(0.9,-0.03);
  \node[orange!70!black,below] at (0.5,-0.04) {$K=f_2(J)$};
  \foreach \p/\l in {0.3/p_0,0.65/p_1}{
    \draw[line width=2.2pt,blue!70] (\p,0.15)--(\p,0.85);
    \node[blue!70!black,above] at (\p,0.86) {$\{\l\}\times J$};}
  \foreach \q/\l in {0.35/q_0,0.72/q_1}{
    \draw[line width=2.2pt,teal] (0.1,\q)--(0.9,\q);
    \node[teal!70!black,right] at (0.91,\q) {$K\times\{\l\}$};}
  \foreach \p in {0.3,0.65}{\foreach \q in {0.35,0.72}{\fill[red!70!black] (\p,\q) circle (0.012);}}
\end{tikzpicture}
\caption{A lattice comb (case (I) of Theorem~\ref{thm:B}) over a $2$-cycle $P=\{p_0,p_1\}$ of $\varphi=f_2\circ f_1$.
All vertical sections equal $J$, the heights $q_i=f_1(p_i)$ lie in $J$, and the abscissae $p_i$ lie in $f_2(J)$,
so every vertical piece meets every horizontal piece (red points).}
\label{fig:lattice}
\end{figure}

\begin{proposition}[Rigidity of the sections]\label{prop:rigidity}
Exactly one of the following holds:
\begin{enumerate}[\upshape(a)]
\item all $M_i$ are finite, with the same cardinality;
\item all $M_i$ are infinite and nowhere dense;
\item all $M_i$ are finite unions of nondegenerate closed intervals, with the same number of components.
\end{enumerate}
The same holds for the horizontal sections $N_0,\dots,N_{m-1}$.
\end{proposition}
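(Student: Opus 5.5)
The plan is to exploit that each $M_i$ is an $\omega$-limit set of the single interval map $\psi=\chi^m$, and that $\chi$ maps these sets onto each other cyclically (Lemma~\ref{lem:relations}(b): $\chi(M_i)=M_{i+1}$). Since $M_i=\omega_{\psi}(\chi^i(y))$ is an $\omega$-limit set of an interval map, the trichotomy of \cite{ABCP,Sh,BC} applies to it individually. Either $M_i$ is finite, and then it is a periodic orbit of $\psi$. Or it is infinite and nowhere dense. Or it has nonempty interior, and then it is a transitive cycle of intervals of $\psi$, in particular a finite union of nondegenerate closed intervals. So the task reduces to showing that the type, and the count within each type, is preserved by passing from $M_i$ to $M_{i+1}=\chi(M_i)$. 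Cyclicity ($\chi^m(M_i)=M_i$) then closes the loop.

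For finite sets, $\chi$ maps $M_i$ onto $M_{i+1}$, so $\#M_{i+1}\le \#M_i$. Going once around the cycle gives $\#M_0\ge\#M_1\ge\dots\ge\#M_{m-1}\ge\#M_0$, so all cardinalities are equal. Since images of finite sets are finite, finiteness propagates forward, and by cyclicity it holds for all $i$ once it holds for one.

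For sets with nonempty interior, suppose $M_i$ has nonempty interior. Then $M_i$ is a transitive cycle of intervals for $\psi$, with components $K^{(1)},\dots,K^{(r)}$ permuted cyclically by $\psi$. Its image $\chi(M_i)=M_{i+1}$ is a finite union of the intervals $\chi(K^{(s)})$. I need to check that these are nondegenerate. If $\chi(K^{(s)})$ were a point, then applying $\chi^{m-1}$ would make $\psi(K^{(s)})=K^{(s+1)}$ a point, a contradiction. Hence $M_{i+1}$ has nonempty interior, and this type also propagates around the cycle. The remaining type, infinite nowhere dense, is then forced to be common to all $i$ by exclusion, since each $M_i$ has exactly one of the three types.

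The main obstacle is equality of the number of components in case (c). Write $c_i$ for the number of components of $M_i$. Because $\chi$ is continuous, each component of $M_i$ maps into a single component of $M_{i+1}$, and the map $\chi\colon M_i\to M_{i+1}$ is onto. So each component of $M_{i+1}$ is a union of images of components of $M_i$, which gives $c_{i+1}\le c_i$. Cyclicity again yields equality. I must make sure components of $M_{i+1}$ are exactly the maximal intervals and that surjectivity gives the inequality, not the reverse. Here continuity gives "image of a connected set is connected", and surjectivity gives "every component of $M_{i+1}$ receives at least one component of $M_i$". Together these define a surjection from the components of $M_i$ onto those of $M_{i+1}$, so $c_{i+1}\le c_i$.

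For the horizontal sections, Lemma~\ref{lem:relations}(c) gives $N_i=\omega_{\varphi^m}(f_2(\chi^i(y)))$ with $\varphi(N_i)=N_{i+1}$ and $\varphi^m(N_i)=N_i$. The identical argument applies with $\varphi$ in place of $\chi$. Alternatively, one can invoke Remark~\ref{rem:swap}, but the direct repetition is simplest.
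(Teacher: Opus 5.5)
Your proof is correct and follows the paper's argument: each $M_i$ falls into the trichotomy as an $\omega$-limit set of $\chi^m$, cardinality and number of components cannot increase under the surjection $\chi\colon M_i\to M_{i+1}$, and $\chi^m(M_i)=M_i$ closes the cycle, with the $N_i$ handled in the same way. The only minor variation is that you propagate nonempty interior forward (no component of the cycle of intervals $M_i$ can collapse under $\chi$, because $\chi^m$ maps it onto another nondegenerate component), whereas the paper propagates it backward using $M_i=\chi^{m-1}(M_{i+1})$ and the finite case; both arguments work.
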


\begin{proof}
Each $M_i$ is an $\omega$-limit set of the interval map $\chi^m$, so it is of one of the three types
\cite{ABCP,Sh}. Since $\chi(M_i)=M_{i+1}$, $\#M_{i+1}\le\#M_i$; going around the cycle, if one $M_i$ is finite
all are, with equal cardinalities.

Suppose that some $M_{i+1}$ has nonempty interior, so it is a finite union of nondegenerate intervals. Then
$M_i=\chi^{m-1}(M_{i+1})$ is a finite union of intervals and points. If $M_i$ were nowhere dense it would be
finite, and then so would $M_{i+1}=\chi(M_i)$, a contradiction. Hence $M_i$ has nonempty interior, and going
around the cycle all $M_i$ are of type (c). For type (c), $\chi$ maps each component of $M_i$ into a single
component of $M_{i+1}$, and $\chi(M_i)=M_{i+1}$ makes the induced map between components onto; so the number of
components cannot increase along the cycle and is therefore constant.

For the $N_i$ use $\varphi(N_i)=N_{i+1}$ and that $N_i$ is an $\omega$-limit set of $\varphi^m$ (Lemma~\ref{lem:relations}).
\end{proof}

\begin{remark}[Contrast with triangular maps]\label{rem:contrast}
For a triangular map $T(x,y)=(f(x),g(x,y))$ whose base orbit converges to an $m$-cycle, the $\omega$-limit set
is also contained in finitely many fibres, but the fibre maps depend on the base point. This non-autonomous
freedom allows sections of different types in a single $\omega$-limit set: for instance
$\{0\}\times\mathcal C\cup\{1\}\times(\frac13\mathcal C\cup[\frac23,1])$, with $\mathcal C$ the middle-thirds Cantor set,
is an $\omega$-limit set of a triangular map \cite[Thm.~D and Ex.~6.1]{Gu26}. By
Proposition~\ref{prop:rigidity}, no such mixing of types occurs among the vertical sections of a Cournot map,
because the second coordinate of $F^2$ evolves autonomously under $\chi$.
\end{remark}

\section{The propagation lemma}\label{sec:prop}

\begin{lemma}\label{lem:incidence}
$V_i\cap H_j\neq\emptyset$ if and only if $q_j\in M_i$ and $p_i\in N_j$; in that case $V_i\cap H_j=\{(p_i,q_j)\}$.
Moreover, $\omega_{F^2}(z)\cap\omega_{F^2}(F(z))\neq\emptyset$ if and only if $V_i\cap H_j\neq\emptyset$ for some $i,j$.
\end{lemma}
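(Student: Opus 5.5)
The plan is to compute the intersection $V_i\cap H_j$ directly from the product descriptions $V_i=\{p_i\}\times M_i$ and $H_j=N_j\times\{q_j\}$, and then to read off the second assertion from Proposition~\ref{prop:comb}.

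\emph{Step 1: the incidence criterion.} A point $(a,b)$ belongs to $V_i$ if and only if $a=p_i$ and $b\in M_i$. It belongs to $H_j$ if and only if $a\in N_j$ and $b=q_j$. Hence $(a,b)\in V_i\cap H_j$ forces $(a,b)=(p_i,q_j)$ together with $q_j\in M_i$ and $p_i\in N_j$. Conversely, if both memberships hold, then $(p_i,q_j)$ lies in both pieces. This gives the equivalence and shows that the intersection is the single point $\{(p_i,q_j)\}$ whenever it is nonempty.

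\emph{Step 2: the second assertion.} By Proposition~\ref{prop:comb}, $\omega_{F^2}(z)=\bigcup_iV_i$ and $\omega_{F^2}(F(z))=\bigcup_jH_j$. Distributing the intersection over the finite unions gives
\[
\omega_{F^2}(z)\cap\omega_{F^2}(F(z))=\bigcup_{i,j}(V_i\cap H_j).
\]
The left-hand side is nonempty exactly when some $V_i\cap H_j$ is nonempty.

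I do not expect a genuine obstacle here. The only point needing care is to use the precise identification of $\omega_{F^2}(z)$ and $\omega_{F^2}(F(z))$ from Proposition~\ref{prop:comb}, rather than just the decomposition of $\omega_F(z)$ in \eqref{eq:comb}. The union in \eqref{eq:comb} would not by itself tell us which pieces come from which $F^2$-limit set.
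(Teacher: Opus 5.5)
Your proposal is correct and follows essentially the paper's own argument. You determine $V_i\cap H_j$ from the fixed first coordinate $p_i$ of $V_i$ and the fixed second coordinate $q_j$ of $H_j$, and you get the second assertion by distributing the intersection over the identifications $\omega_{F^2}(z)=\bigcup_iV_i$ and $\omega_{F^2}(F(z))=\bigcup_jH_j$ of Proposition~\ref{prop:comb}.
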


\begin{proof}
A point of $V_i$ has first coordinate $p_i$ and a point of $H_j$ has second coordinate $q_j$. The second
statement follows from Proposition~\ref{prop:comb}.
\end{proof}

We say that $(i,j)\in\mathbb{Z}_m^2$ is an \emph{incidence} if $V_i\cap H_j\neq\emptyset$.

\begin{lemma}[Propagation]\label{lem:propagation}
If $(i,j)$ is an incidence, then so are $(i+1,j+1)$ and $(j+1,i)$.
\end{lemma}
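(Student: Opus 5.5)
By Lemma~\ref{lem:incidence}, the hypothesis that $(i,j)$ is an incidence means exactly $q_j\in M_i$ and $p_i\in N_j$. I will verify the analogous pair of membership conditions for $(i+1,j+1)$ and for $(j+1,i)$, using only the relations of Lemma~\ref{lem:relations}.

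For $(i+1,j+1)$ I need $q_{j+1}\in M_{i+1}$ and $p_{i+1}\in N_{j+1}$. By Lemma~\ref{lem:relations}(a),(b),
\[
q_{j+1}=\chi(q_j)\in\chi(M_i)=M_{i+1}.
\]
For the second condition, $f_1(p_{i+1})=q_{i+1}$ gives $\varphi(p_i)=p_{i+1}$. By Lemma~\ref{lem:relations}(c),
\[
p_{i+1}=\varphi(p_i)\in\varphi(N_j)=N_{j+1}.
\]
So $(i+1,j+1)$ is an incidence.

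For $(j+1,i)$ I need $q_i\in M_{j+1}$ and $p_{j+1}\in N_i$. The first follows by applying $f_1$ to $p_i\in N_j$:
\[
q_i=f_1(p_i)\in f_1(N_j)=M_{j+1},
\]
using Lemma~\ref{lem:relations}(c). The second follows by applying $f_2$ to $q_j\in M_i$:
\[
p_{j+1}=f_2(q_j)\in f_2(M_i)=N_i,
\]
using Lemma~\ref{lem:relations}(a) and the definition $N_i=f_2(M_i)$. Hence $(j+1,i)$ is an incidence as well.

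There is no real obstacle. The only care needed is with the indices: the second implication swaps the roles of vertical and horizontal pieces, because $F$ maps $V_i$ into $H_i$ and $H_j$ into $V_{j+1}$. Geometrically, $F$ sends the intersection point $(p_i,q_j)$ to $(f_2(q_j),f_1(p_i))=(p_{j+1},q_i)$, which is a point of $V_{j+1}\cap H_i$. This observation gives the second implication directly, and the first follows by applying it twice.
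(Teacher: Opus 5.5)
Your proof is correct and follows the paper's argument essentially line for line, deriving the same four memberships from Lemma~\ref{lem:relations}. One small slip: $\varphi(p_i)=p_{i+1}$ holds by the labelling of the periodic orbit $P$, not as a consequence of $f_1(p_{i+1})=q_{i+1}$; your closing observation that $F(V_i)=H_i$ and $F(H_j)=V_{j+1}$, so that applying the map $(i,j)\mapsto(j+1,i)$ twice yields the first implication, is correct and a tidy alternative packaging.
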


\begin{proof}
By Lemma~\ref{lem:incidence}, $q_j\in M_i$ and $p_i\in N_j$. By Lemma~\ref{lem:relations},
\[
q_{j+1}=\chi(q_j)\in\chi(M_i)=M_{i+1},\qquad p_{i+1}=\varphi(p_i)\in\varphi(N_j)=N_{j+1},
\]
so $(i+1,j+1)$ is an incidence. Also
\[
q_i=f_1(p_i)\in f_1(N_j)=M_{j+1},\qquad p_{j+1}=f_2(q_j)\in f_2(M_i)=N_i,
\]
so $(j+1,i)$ is an incidence.
\end{proof}

\begin{figure}[t]
\centering
\begin{tikzpicture}[scale=0.9,>=Stealth]
  \foreach \i in {0,...,3}{
    \node at (\i+0.5,4.35) {$H_{\i}$};
    \node at (-0.45,3.5-\i) {$V_{\i}$};}
  \draw[gray!50] (0,0) grid (4,4);
  \fill[red!60] (1,3) rectangle (2,4);
  \node[font=\small] at (1.5,3.5) {$(0,1)$};
  \fill[red!30] (2,2) rectangle (3,3); \node[font=\small] at (2.5,2.5) {$(1,2)$};
  \fill[red!30] (3,1) rectangle (4,2); \node[font=\small] at (3.5,1.5) {$(2,3)$};
  \fill[red!30] (0,0) rectangle (1,1); \node[font=\small] at (0.5,0.5) {$(3,0)$};
  \fill[blue!25] (0,1) rectangle (1,2); \node[font=\small] at (0.5,1.5) {$(2,0)$};
  \fill[blue!15] (1,0) rectangle (2,1); \node[font=\small] at (1.5,0.5) {$(3,1)$};
  \fill[blue!15] (2,3) rectangle (3,4); \node[font=\small] at (2.5,3.5) {$(0,2)$};
  \fill[blue!15] (3,2) rectangle (4,3); \node[font=\small] at (3.5,2.5) {$(1,3)$};
  \draw[thick] (0,0) rectangle (4,4);
\end{tikzpicture}
\caption{The propagation lemma for $m=4$. One incidence $(0,1)$ (dark red) generates, by the shift
$(i,j)\mapsto(i+1,j+1)$, a diagonal of incidences (light red), and by $(i,j)\mapsto(j+1,i)$ a second diagonal
(blue). In an alternating comb the incidences depend only on parities, and the two diagonals then cover all
cells, which is impossible (Proposition~\ref{prop:alternating}).}
\label{fig:propagation}
\end{figure}

\section{Proofs of the main results}\label{sec:proofs}

\begin{proposition}[Two shapes]\label{prop:shapes}
Assume $\pi_2(\omega_F(z))$ is connected. Then $\omega_\chi(y)=\bigcup_iM_i$ is a closed interval $J$, possibly
degenerate, and $Q\subset J$. If $J$ is nondegenerate, then either $M_i=J$ for all $i$, or $m$ is even and the
sections alternate between the two intervals $J_0,J_1$ of Theorem~\ref{thm:BM} for $\chi|_J$.
\end{proposition}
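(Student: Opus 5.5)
By Proposition~\ref{prop:comb}, $\pi_2(\omega_F(z))=\omega_\chi(y)\cup Q$. First I will show $Q\subset\omega_\chi(y)$, so that this union is just $\omega_\chi(y)$. By Proposition~\ref{prop:BLC}(c), $\omega_\chi(f_1(x))=f_1(\omega_\varphi(x))=f_1(P)=Q$. Proposition~\ref{prop:BLC}(b) gives $\pi_2(\omega_F(z))=\omega_\chi(y)\cup Q$, which is consistent with this. Now suppose $\pi_2(\omega_F(z))$ is a connected set, say an interval $I_0$.

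If $I_0$ is a single point, then $\omega_\chi(y)=Q=I_0$ and we are done with $J=I_0$. Otherwise $I_0$ is nondegenerate and $Q$ is finite, so $\omega_\chi(y)$ must contain $I_0\setminus Q$. Since $\omega_\chi(y)$ is closed, it then contains all of $I_0$. Hence $\omega_\chi(y)=I_0=J$ and $Q\subset J$. This settles the first assertion.

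Now let $J$ be nondegenerate. Then $J=\omega_\chi(y)$ is an $\omega$-limit set with nonempty interior, so it is a transitive cycle of intervals. Being connected, it is a single interval with $\chi(J)=J$ and $\chi|_J$ transitive. Each $M_i\subset J$ satisfies $\chi^m(M_i)=M_i$ and $\bigcup_i M_i=J$ (Lemma~\ref{lem:relations}). Since finitely many closed sets cover $J$, Baire's theorem gives some $M_{i_0}$ with nonempty interior. By Proposition~\ref{prop:rigidity}(c), every $M_i$ is then a finite union of nondegenerate intervals.

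Apply Theorem~\ref{thm:BM} to $\chi|_J$. In the mixing case, $\chi^m|_J$ is mixing, hence transitive. Lemma~\ref{lem:noinvariant}, applied to $\psi=\chi^m$ and $A=M_i$ (closed, invariant, with nonempty interior), gives $M_i=J$ for all $i$.

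In the non-mixing case we have $J=J_0\cup J_1$, $\chi$ swaps $J_0$ and $J_1$, and $\chi^2|_{J_k}$ is mixing. If $m$ is odd, then $\chi^m$ still swaps $J_0$ and $J_1$, and $\chi^m|_J$ is transitive. Indeed, $\chi^{2m}|_{J_k}$ is mixing, and $\chi^m$ interchanges the two halves, so any open sets $U,V$ in $J$ are linked by some iterate of $\chi^m$. Lemma~\ref{lem:noinvariant} again gives $M_i=J$.

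If $m$ is even, $\chi^m$ preserves each $J_k$ and is mixing there. Take a component of $M_i$ with interior. Its interior meets some $J_k$ in a nondegenerate interval, so $A=M_i\cap J_k$ is closed, $\chi^m$-invariant, and has nonempty interior in $J_k$. Lemma~\ref{lem:noinvariant} gives $J_k\subset M_i$. Thus each $M_i$ is $J_0$, $J_1$, or $J$.

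The point I expect to be the main obstacle is excluding mixed patterns. Since $\chi(M_i)=M_{i+1}$ and $\chi(J)=J$, $\chi(J_k)=J_{1-k}$, the sequence $(M_i)$ is either constantly $J$ or alternates between $J_0$ and $J_1$. A mixed pattern would need $\chi$ to map $J$ onto some $J_k$, which is impossible. Hence either all $M_i=J$, or $M_i=J_{(i+k)\bmod 2}$.

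I will also have to handle the subtle case where $M_i\cap J_k$ has empty interior in $J_k$ but meets it, for example only at the point $c$. This is harmless. If $M_i\cap J_k$ is not all of $J_k$, Lemma~\ref{lem:noinvariant} shows its interior in $J_k$ is empty. But $M_i$ is a finite union of nondegenerate intervals, so $M_i\cap J_k$ then consists only of points. These can only be $c$, since a nondegenerate component of $M_i$ meeting $J_k$ at another point would give interior in $J_k$. Therefore $M_i\in\{J_0,J_1,J\}$ exactly.
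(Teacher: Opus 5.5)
Your proof is correct and follows essentially the paper's argument: Theorem~\ref{thm:BM} applied to the transitive map $\chi|_J$, Lemma~\ref{lem:noinvariant} applied to the $\chi^m$-invariant sections, a split on the parity of $m$, and the observation that $\chi(J)=J$ and $\chi(J_k)=J_{1-k}$ force the pattern of sections to be either constant or alternating. The differences are local. You obtain $\omega_\chi(y)=I_0\supset Q$ by a direct closure argument, avoiding the paper's case split on the rigidity types. For odd $m$ you prove that $\chi^m|_J$ is transitive, instead of deducing $J_{k+m}\subset M_i$ from $J_k\subset M_i$. Both variants are valid, and your explicit check that $M_i\cap J_{1-k}\subset\{c\}$ fills in a step the paper leaves terse.
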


\begin{proof}
By Proposition~\ref{prop:comb}, $\pi_2(\omega_F(z))=U\cup Q$ with $U=\omega_\chi(y)$ and $Q$ finite.

If the $M_i$ are of type (a) or (b) of Proposition~\ref{prop:rigidity}, $U$ is closed and nowhere dense, hence so
is $U\cup Q$, and a connected nowhere dense subset of $\I$ is a point. So $U\cup Q=\{c\}$ and $J=\{c\}$.

Assume type (c). Then $U$ is a finite union of nondegenerate closed intervals. Finitely many points cannot fill a
gap of positive length, and a point of $Q$ outside $U$ would be an isolated point of $U\cup Q$. Hence $U$ is an
interval $J$ and $Q\subset J$. Since $J=\omega_\chi(y)$ has nonempty interior, some $w=\chi^{n_0}(y)$ lies in $J$
with $\omega_\chi(w)=J$, so $\chi|_J$ is transitive.

If $\chi|_J$ is mixing, then $\chi^m|_J$ is transitive, and each $M_i$ is a closed $\chi^m$-invariant subset of $J$ with
nonempty interior; by Lemma~\ref{lem:noinvariant}, $M_i=J$.

Otherwise let $J_0,J_1$ be as in Theorem~\ref{thm:BM}, so $\chi^{2m}|_{J_k}$ is transitive. The set $M_i$ has a
nondegenerate component, which meets the interior of some $J_k$; then $M_i\cap J_k$ is a closed
$\chi^{2m}$-invariant subset of $J_k$ with nonempty interior, so $J_k\subset M_i$ by Lemma~\ref{lem:noinvariant}.
Applying $\chi^m$ gives $J_{k+m}\subset M_i$ (index modulo $2$). If $m$ is odd, $M_i=J$. If $m$ is even,
the same argument applied to $J_{1-k}$ shows that either $M_i=J$ or $M_i=J_k$. In the first case
$M_{i'}=\chi^{i'-i}(J)=J$ for all $i'$; in the second, $M_{i+1}=\chi(J_k)=J_{1-k}$, and the sections alternate.
\end{proof}

\begin{proposition}[Lattice case]\label{prop:lattice}
If $\pi_2(\omega_F(z))$ is connected and $M_0=\dots=M_{m-1}=J$, then every $(i,j)$ is an incidence,
$\omega_F(z)=(P\times J)\cup(f_2(J)\times Q)$ is connected, and \textup{(C2)} holds.
\end{proposition}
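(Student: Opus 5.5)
The plan is to use the incidence criterion of Lemma~\ref{lem:incidence}: $(i,j)$ is an incidence exactly when $q_j\in M_i$ and $p_i\in N_j$. With $M_i=J$ for every $i$, Lemma~\ref{lem:relations}(c) and the definition of $N_i$ give $N_j=f_2(M_j)=f_2(J)$ for all $j$. So it is enough to prove two inclusions: $Q\subset J$ and $P\subset f_2(J)$.

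The first inclusion is contained in Proposition~\ref{prop:shapes}, which applies because $\pi_2(\omega_F(z))$ is connected; it gives $\omega_\chi(y)=J$ and $Q\subset J$. For the second, Lemma~\ref{lem:relations}(a) gives $p_{i+1}=f_2(q_i)$, and $q_i\in J$, so $p_{i+1}\in f_2(J)$ for every $i$. Running $i$ over $\mathbb{Z}_m$ yields $P\subset f_2(J)$. Hence every pair $(i,j)$ is an incidence, and the intersection point of $V_i$ and $H_j$ is $(p_i,q_j)$.

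Next we substitute into \eqref{eq:comb}. This gives $\omega_F(z)=\bigcup_i\{p_i\}\times J\cup\bigcup_j f_2(J)\times\{q_j\}=(P\times J)\cup(f_2(J)\times Q)$. Each piece $\{p_i\}\times J$ and $f_2(J)\times\{q_j\}$ is connected, because $J$ and $f_2(J)$ are intervals (possibly degenerate). Every vertical piece meets every horizontal piece, so the union is connected: any two pieces are linked through a piece of the other orientation. Finally, (C2) follows from the last sentence of Lemma~\ref{lem:incidence}, since $(0,0)$ is an incidence, or from Proposition~\ref{prop:comb} directly.

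The main obstacle is making sure $Q\subset J$ is really available and not circular. It is stated in Proposition~\ref{prop:shapes}, whose proof treats the nowhere dense case separately; in that case $J=\{c\}$ and $Q\subset\{c\}$ still holds. So the degenerate case needs no extra work. If $J$ is a point, the set is $P\times\{c\}\cup f_2(c)\times Q$ with $P=\{f_2(c)\}$, a single point, which is consistent with the conclusion.
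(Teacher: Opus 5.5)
Your proposal is correct and follows the paper's proof essentially step for step. You get $Q\subset J$ from Proposition~\ref{prop:shapes}, then $N_j=f_2(J)$, then $p_i=f_2(q_{i-1})\in f_2(J)$ by Lemma~\ref{lem:relations}(a); Lemma~\ref{lem:incidence} then gives all incidences and (C2), and connectedness follows because every vertical piece meets every horizontal one.
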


\begin{proof}
By Proposition~\ref{prop:shapes}, $Q\subset J$, so $q_j\in M_i$ for all $i,j$. Also $N_j=f_2(J)$ and, by
Lemma~\ref{lem:relations}(a), $p_i=f_2(q_{i-1})\in f_2(J)$. By Lemma~\ref{lem:incidence} every $(i,j)$ is an
incidence. Each piece $V_i=\{p_i\}\times J$ and $H_j=f_2(J)\times\{q_j\}$ is connected, and every vertical
piece meets every horizontal piece, so the union is connected. Finally (C2) holds by Lemma~\ref{lem:incidence}.
\end{proof}

\begin{proposition}[Alternating case]\label{prop:alternating}
If $m$ is even and $M_i=J_{(i+k)\bmod2}$ as in Proposition~\ref{prop:shapes}, then there are no incidences.
Consequently \textup{(C2)} fails and $\omega_F(z)$ is not connected.
\end{proposition}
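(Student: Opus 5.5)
The plan is to show that in the alternating situation the incidence condition of Lemma~\ref{lem:incidence} forces a parity relation between $i$ and $j$. The propagation Lemma~\ref{lem:propagation} would then produce a second incidence with the opposite parity relation, which is impossible. Since $m$ is even, parities of indices taken modulo $m$ are well defined, so we may work with the classes of $i,j$ modulo $2$ throughout.

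First locate the heights $q_j$. By Proposition~\ref{prop:shapes} we have $Q\subset J=J_0\cup J_1$, and by Lemma~\ref{lem:relations}(a) $\chi(q_j)=q_{j+1}$.
\begin{itemize}
\item Suppose some $q_j=c$. Since $\chi(c)=c$ (Theorem~\ref{thm:BM}), $q_{j+1}=c$, and by $m$-periodicity of the indices all $q_j$ equal $c$. Then Lemma~\ref{lem:relations}(a) gives $p_{j+1}=f_2(q_j)=f_2(c)$ for every $j$. So all $p_i$ coincide and $m=1$, contradicting that $m$ is even.
\item Hence no $q_j$ equals $c$. Each $q_j$ then lies in exactly one of $J_0\setminus\{c\}$ and $J_1\setminus\{c\}$. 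If $q_j\in J_a$, then $q_{j+1}=\chi(q_j)\in\chi(J_a)=J_{a+1}$ and $q_{j+1}\neq c$.
\item Consequently there is $b\in\{0,1\}$ with $q_j\in J_{(j+b)\bmod 2}\setminus\{c\}$ for all $j$. This is consistent around the cycle because $m$ is even.
\end{itemize}

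Now suppose $(i,j)$ were an incidence. Lemma~\ref{lem:incidence} gives $q_j\in M_i=J_{i+k}$. Since $q_j\in J_{j+b}\setminus\{c\}$, this forces $i+k\equiv j+b\pmod 2$, that is, $i-j\equiv b-k\pmod 2$. By Lemma~\ref{lem:propagation}, $(j+1,i)$ is also an incidence, and the same reasoning gives $(j+1)-i\equiv b-k\pmod 2$. Adding the two congruences yields $1\equiv 2(b-k)\equiv 0\pmod 2$, a contradiction. Hence there are no incidences, and (C2) fails by the second part of Lemma~\ref{lem:incidence}.

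For disconnectedness, set $A=\omega_{F^2}(z)=\bigcup_iV_i$ and $B=\omega_{F^2}(F(z))=\bigcup_jH_j$. By Propositions~\ref{prop:comb} and~\ref{prop:BLC}(a), $\omega_F(z)=A\cup B$. Both $A$ and $B$ are nonempty and closed, being $\omega$-limit sets. They are disjoint because there are no incidences. So $\omega_F(z)$ is the union of two disjoint nonempty closed sets and is not connected.

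The only delicate point is the first step: ruling out $q_j=c$ and checking that the parity assignment $j\mapsto j+b$ is consistent around the cycle. This is exactly where the evenness of $m$ and the fact that $c$ is a fixed point of $\chi$ are used.
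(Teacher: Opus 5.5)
Your proof is correct, but it is organized differently from the paper's.

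The paper does not locate $Q$ in advance. Its steps are:
\begin{enumerate}
\item It shows that incidences depend only on parities, using $M_{i+2}=M_i$, $N_{j+2}=N_j$ and the $\chi^2$-, $\varphi^2$-invariance of the sections.
\item It applies Lemma~\ref{lem:propagation} twice to a single incidence, which reaches all four parity classes, so every $(i,j)$ is an incidence.
\item This forces $q_i\in M_i\cap M_{i-1}=\{c\}$ for all $i$, and $P$ collapses to one point.
\end{enumerate}

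You instead pin down the $q_j$ first. None of them can equal the fixed point $c$, so they alternate between $J_0\setminus\{c\}$ and $J_1\setminus\{c\}$. After that, an incidence $(i,j)$ forces $i-j\equiv b-k\pmod 2$, while a single application of $(i,j)\mapsto(j+1,i)$ flips the parity of $i-j$.

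What your route buys:
\begin{itemize}
\item It uses only the vertical half $q_j\in M_i$ of the incidence condition.
\item It needs neither the horizontal sections $N_j$ nor the parity-invariance step.
\item It uses propagation only once.
\item It is essentially the argument of Lemma~\ref{lem:disjoint}, with the propagation lemma taking the place of the information on the $p_i$. It also anticipates the step on the points $q_i$ that the paper carries out later, in the necessity part of Theorem~\ref{thm:C}(2).
\end{itemize}

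What the paper's route buys: it never uses $\chi(c)=c$ or $Q\subset J$. It works purely from the incidence combinatorics together with $J_0\cap J_1=\{c\}$.

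Finally, you split $\omega_F(z)$ directly into the disjoint nonempty closed sets $\omega_{F^2}(z)$ and $\omega_{F^2}(F(z))$. This is a self-contained replacement for the paper's appeal to the necessity direction of Theorem~\ref{thm:LM}, and it is precisely the reason that direction holds.
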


\begin{proof}
Here $M_{i+2}=M_i$, hence $N_{i+2}=N_i$ for all $i$. We first show that incidences depend only on parities.
If $(i,j)$ is an incidence, then $q_j\in M_i$ and $p_i\in N_j$, and
\[
q_{j+2}=\chi^2(q_j)\in\chi^2(M_i)=M_i,\qquad p_{i+2}=\varphi^2(p_i)\in\varphi^2(N_j)=N_j;
\]
together with $M_{i+2}=M_i$ and $N_{j+2}=N_j$ this shows that $(i+2,j)$ and $(i,j+2)$ are incidences.

Since $m$ is even, parities are well defined on $\mathbb{Z}_m$. Suppose that some $(i_0,j_0)$ is an
incidence, with parities $(a,b)$. Lemma~\ref{lem:propagation}, applied to $(i_0,j_0)$ and then to $(j_0+1,i_0)$,
gives incidences with parities $(a+1,b+1)$, $(b+1,a)$ and $(b,a+1)$. Whether $a=b$ or $a\neq b$, these four pairs
exhaust the four parity classes, so by parity invariance every $(i,j)\in\mathbb{Z}_m^2$ is an incidence. In
particular $(i,i)$ and $(i,i+1)$ are incidences for every $i$, that is, $q_i\in M_i$ and $q_{i+1}\in M_i$. Hence $q_i\in M_i\cap M_{i-1}=J_0\cap J_1=\{c\}$ for every $i$. By Lemma~\ref{lem:relations}(a),
$p_{i+1}=f_2(q_i)=f_2(c)$ for every $i$, so $P$ is a single point and $m=1$, contradicting that $m$ is even.

So there are no incidences, (C2) fails by Lemma~\ref{lem:incidence}, and $\omega_F(z)$ is not connected by
Theorem~\ref{thm:LM}.
\end{proof}

\begin{proof}[Proof of Theorem~\ref{thm:B}]
By Proposition~\ref{prop:shapes}, either all $M_i$ equal an interval $J$ (degenerate or not), or $m$ is even and
the sections alternate. The first case is Proposition~\ref{prop:lattice}, which also gives $P\subset f_2(J)$;
the second is Proposition~\ref{prop:alternating}. The two cases are exclusive, because in the second $M_i\neq M_{i+1}$.
\end{proof}

\begin{proof}[Proof of Theorem~\ref{thm:A}]
Necessity is Theorem~\ref{thm:LM}. For sufficiency assume (C1) and (C2).

If $\omega_{f_2\circ f_1}(x)$ is finite, $\pi_2(\omega_F(z))$ is connected by (C1), so Theorem~\ref{thm:B} applies.
Case (II) is excluded because (C2) holds, so we are in case (I) and $\omega_F(z)$ is connected.

If $\omega_{f_1\circ f_2}(y)$ is finite, let $\sigma(x,y)=(y,x)$. Then $G=\sigma\circ F\circ\sigma$,
$G(u,v)=(f_1(v),f_2(u))$, is a Cournot map whose compositions are $f_1\circ f_2$ and $f_2\circ f_1$ in this order. We
have $\omega_G(\sigma(z))=\sigma(\omega_F(z))$, $\omega_{G^2}(\sigma(z))=\sigma(\omega_{F^2}(z))$ and
$\omega_{G^2}(G(\sigma(z)))=\sigma(\omega_{F^2}(F(z)))$, and $\sigma$ exchanges the projections. So
(C1) and (C2) hold for $G$ and $\sigma(z)=(y,x)$, whose first coordinate has a finite $\omega$-limit set under
the first composition of $G$. By the first case $\omega_G(\sigma(z))$ is connected, hence so is $\omega_F(z)$.
\end{proof}

\begin{remark}
The proof shows that, when $\omega_{f_2\circ f_1}(x)$ is finite, (C2) and the connectedness of $\pi_2(\omega_F(z))$
alone imply connectedness; the connectedness of $\pi_1(\omega_F(z))$ then follows.
\end{remark}

\section{Piecewise linear maps with prescribed values}\label{sec:lemma}

A \emph{grid} of a nondegenerate compact interval $A$ is a finite subset $G\subset A$ containing both endpoints;
the closures of the components of $A\setminus G$ are the \emph{cells} of $G$. A continuous map $h\colon A\to B$ is
\emph{piecewise linear} if there is a finite set $\operatorname{Br}(h)\subset A$, containing the endpoints of $A$, such
that $h$ is affine and nonconstant on the closure of each component of $A\setminus\operatorname{Br}(h)$; these closures
are the \emph{laps} of $h$, and the points of $\operatorname{Br}(h)$ its \emph{breakpoints}. We always take
$\operatorname{Br}(h)$ minimal except that it may contain the points of a given grid.

\begin{definition}\label{def:good}
Let $A,B$ be nondegenerate compact intervals with grids $G_A,G_B$, and $\lambda>0$. A continuous map $h\colon A\to B$
is \emph{$\lambda$-good} (with respect to $G_A,G_B$) if it is piecewise linear and
\begin{enumerate}[\upshape(G1)]
\item $G_A\subset\operatorname{Br}(h)$ and $h(\operatorname{Br}(h))\subset G_B$;
\item $|h'|\ge\lambda$ on every lap;
\item $h(C)=B$ for every cell $C$ of $G_A$.
\end{enumerate}
\end{definition}

\begin{lemma}[Prescribed values]\label{lem:good}
Let $A,B$ be nondegenerate compact intervals with grids $G_A,G_B$, let $v\colon G_A\to G_B$ be any map and
$\lambda>0$. Then there is a $\lambda$-good map $h\colon A\to B$ with $h|_{G_A}=v$.
\end{lemma}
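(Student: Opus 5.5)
The map will be built cell by cell and the pieces glued together. Write $B=[b,b']$, so $b,b'\in G_B$ because a grid contains the endpoints. Let $C=[s,t]$ be a cell of $G_A$, with prescribed values $v(s),v(t)\in G_B$. On $C$ we want a piecewise linear map $h_C\colon C\to B$ satisfying four conditions: $h_C(s)=v(s)$ and $h_C(t)=v(t)$; every breakpoint is sent into $G_B$; every lap has slope of absolute value at least $\lambda$; and $h_C(C)=B$. Once each cell has such a map, $h$ is defined as $h_C$ on each cell $C$. Adjacent cells share only a grid point, and both pieces take the value $v$ there, so $h$ is continuous. Taking $\operatorname{Br}(h)$ to be the union of the breakpoints of the pieces, it contains $G_A$ and maps into $G_B$, which gives (G1). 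Conditions (G2) and (G3) hold cell by cell.

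On one cell $C=[s,t]$ the map is a zigzag through a list of target values in $G_B$. I would use
\[
v(s),\ b,\ b',\ b,\ b',\ \dots,\ b,\ b',\ v(t),
\]
with $2N$ intermediate values alternating between $b$ and $b'$, deleting any consecutive repetition. Split $C$ into as many equal subintervals as there are consecutive pairs in the cleaned list. On the $k$-th subinterval, $h_C$ is the affine map from the $k$-th value to the $(k+1)$-st. Each consecutive pair is distinct, so every lap is nonconstant. All breakpoints are sent to $v(s)$, $b$, $b'$ or $v(t)$, and these lie in $G_B$. The list contains both $b$ and $b'$, so by the intermediate value theorem $h_C(C)=B$.

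The only real constraint is the slope bound (G2), which I expect to be the main obstacle. Any lap whose endpoint values are distinct points of $G_B$ has a rise of at least $\delta=\min\{|g-g'|: g\neq g'\in G_B\}>0$; this minimum is positive because $G_B$ is finite with at least two points. If the cleaned list has $r+1$ entries, each lap has length $|C|/r$. So the slope is at least $\delta r/|C|$, and this is at least $\lambda$ once $r\ge\lambda|C|/\delta$.

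Increasing $N$ increases $r$: each extra pair $b,b'$ adds two laps, and deleting repetitions removes at most two entries in total, those at the two ends. So a large enough $N$ works on every cell, and since there are finitely many cells a single $N$ works for all of them. Finally, $\operatorname{Br}(h)$ may be taken minimal apart from $G_A$. Removing a breakpoint where two adjacent laps happen to have equal slope keeps every remaining breakpoint in the original set, so (G1) and (G2) still hold.
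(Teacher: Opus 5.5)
Your proposal is correct and follows the paper's proof essentially verbatim. On each cell you zigzag through $v(u),\gamma,\delta,\dots,\gamma,\delta,v(u')$ with consecutive repetitions deleted, use laps of equal length, and get the slope bound $\eta(2r-1)/|C|\ge\lambda$ once there are enough oscillations; your extra remark about trimming $\operatorname{Br}(h)$ to a minimal set containing $G_A$ is a harmless detail the paper leaves implicit.
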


\begin{proof}
Write $B=[\gamma,\delta]$ and let $\eta>0$ be the minimal distance between two distinct points of $G_B$. Let
$C=[u,u']$ be a cell of $G_A$ and let $r\ge1$ be an integer. Consider the finite sequence
\[
v(u),\ \gamma,\ \delta,\ \gamma,\ \delta,\ \dots,\ \gamma,\ \delta,\ v(u')
\]
containing $r$ pairs $\gamma,\delta$, and delete every term that equals the preceding one. We obtain a sequence
$w_0=v(u),w_1,\dots,w_\ell=v(u')$ of points of $G_B$ with $w_{k}\neq w_{k+1}$ and $\ell\ge 2r-1$. Divide $C$ into
$\ell$ intervals of equal length by points $u=t_0<t_1<\dots<t_\ell=u'$ and let $h$ be affine on $[t_k,t_{k+1}]$ with
$h(t_k)=w_k$. On each of these intervals $|h'|=|w_{k+1}-w_k|\,\ell/(u'-u)\ge\eta(2r-1)/(u'-u)$, which is at least
$\lambda$ if $r$ is large. The values $\gamma$ and $\delta$ are attained on $C$, so $h(C)=B$. Doing this on every
cell, the definitions agree at the points of $G_A$, so $h$ is continuous, and (G1)--(G3) hold with
$\operatorname{Br}(h)=\{t_k\}\supset G_A$ for all cells.
\end{proof}

Figure~\ref{fig:good} shows such a map. The values at the grid points are arbitrary; the fast oscillations inside
each cell provide expansion and surjectivity.

\begin{figure}[t]
\centering
\begin{tikzpicture}[xscale=7.5,yscale=3.4]
  \draw[gray!60] (0,0) rectangle (1,1);
  \foreach \g in {0,0.3,0.55,1}{\draw[gray!50,dashed] (\g,0)--(\g,1); \fill (\g,-0.02) circle (0.006);}
  \foreach \g in {0,0.4,1}{\draw[gray!35,dotted] (0,\g)--(1,\g);}
  \node[left] at (0,0) {$\gamma$}; \node[left] at (0,1) {$\delta$}; \node[left] at (0,0.4) {$v(u)$};
  \node[below] at (0,-0.04) {$u$}; \node[below] at (0.3,-0.04) {$u_1$}; \node[below] at (0.55,-0.04) {$u_2$};
  \node[below] at (1,-0.04) {$u_3$};
  \draw[thick,blue!70!black] (0,0.4)--(0.075,0)--(0.15,1)--(0.225,0)--(0.3,1);
  \draw[thick,blue!70!black] (0.3,1)--(0.3625,0)--(0.425,1)--(0.4875,0)--(0.55,0.4);
  \draw[thick,blue!70!black] (0.55,0.4)--(0.614,0)--(0.678,1)--(0.742,0)--(0.806,1)--(0.871,0)--(0.935,1)--(1,0);
  \foreach \x/\y in {0/0.4,0.3/1,0.55/0.4,1/0}{\fill[red!70!black] (\x,\y) circle (0.008);}
\end{tikzpicture}
\caption{A $\lambda$-good map $h\colon A\to B$ (Lemma~\ref{lem:good}) for the grid $G_A=\{u,u_1,u_2,u_3\}$ and
$G_B=\{\gamma,v(u),\delta\}$. The prescribed values at the grid points (red) are kept, every lap joins two points of
$G_B$ with slope at least $\lambda$, and every cell is mapped onto $B$. (In the first cell the repeated value
$\delta$ at the end of the sequence $v(u),\gamma,\delta,\gamma,\delta,v(u_1)$ has been deleted, as in the proof.)}
\label{fig:good}
\end{figure}

\begin{proposition}[Exact compositions]\label{prop:exact}
Let $r\ge1$, let $A_0,A_1,\dots,A_r=A_0$ be nondegenerate compact intervals with grids $G_0,G_1,\dots,G_r=G_0$,
and let $h_k\colon A_{k-1}\to A_k$ be $\lambda$-good with respect to $G_{k-1},G_k$, for $k=1,\dots,r$, where
$\lambda^r>2$. Then $H=h_r\circ\dots\circ h_1\colon A_0\to A_0$ is topologically exact.
\end{proposition}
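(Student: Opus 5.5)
The proof will be a growth argument: any nondegenerate interval is expanded by $H$ until it contains a full cell of $G_0$, and condition (G3) then spreads it onto everything.

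\textbf{Step 1 (the composition is $\lambda^r$-expanding on small pieces).} Set $B_0=\operatorname{Br}(h_1)$ and, inductively, let $B_k\subset A_0$ be the set of points $t$ for which some partial composite $h_j\circ\dots\circ h_1(t)$ with $j<k$ lies in $\operatorname{Br}(h_{j+1})$. Then $\operatorname{Br}(H)\subset B_{r}$ is finite, and $H$ is piecewise linear. On each lap of $H$ the derivative is a product of $r$ slopes, so $|H'|\ge\lambda^r>2$.

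\textbf{Step 2 (an interval with no interior grid point containing an interior breakpoint).} Let $U\subset A_0$ be a nondegenerate closed interval. Suppose $H(U)$ contains no point of $G_0$ in its interior. We claim $U$ contains at most one point of $\operatorname{Br}(H)$ in its interior.

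Indeed, $H$ sends breakpoints into $G_0$. More precisely, by (G1), $h_k(\operatorname{Br}(h_k))\subset G_k$ and $G_k\subset\operatorname{Br}(h_{k+1})$. So the image under $h_k\circ\dots\circ h_1$ of a point of $B_k$ lies in $G_k$, and then stays in the grids $G_{k+1},\dots,G_r=G_0$ under the remaining maps.

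Hence every breakpoint $b$ of $H$ satisfies $H(b)\in G_0$. If $b\in\Int U$, then $H(b)\in G_0\cap H(U)$. This can only happen at an endpoint of the interval $H(U)$, which means $b$ is a turning point where $H(U)$ attains an extremum.

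Take two consecutive breakpoints $b<b'$ in $\Int U$. Then $H$ is affine and nonconstant on $[b,b']$. So $H(b)\ne H(b')$, and both values are endpoints of $H(U)$. Therefore $H([b,b'])=H(U)$, and the affine piece between them already covers $H(U)$.

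\textbf{Step 3 (length growth and the main obstacle).} We show that either $H(U)$ contains a point of $G_0$ in its interior, or $|H(U)|\ge \tfrac{\lambda^r}{2}|U|$.

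By Step 2 there is at most one interior breakpoint. So $U$ splits into at most two pieces on which $H$ is affine with slope $>2$, and one of these pieces has length at least $|U|/2$. If there are two interior breakpoints, Step 2 gives $H(U)=H([b,b'])$. Iterating the same reasoning on the pieces $[\min U,b]$ and $[b',\max U]$ gives the bound again.

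Set $c=\lambda^r/2>1$. Apply the estimate to $U$, then to $H(U)$, and so on. Lengths grow by the factor $c$ until some $H^n(U)$ contains a point of $G_0$ in its interior. Since lengths are bounded by $|A_0|$, this must happen.

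\textbf{Step 4 (reaching the whole interval).} It remains to push an interval meeting $G_0$ to a full cell. I expect this to be the delicate part, where the bookkeeping is needed.

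Consider $W=H^n(U)$ with $g\in G_0\cap\Int W$. Then $W\cap C$ is nondegenerate for a cell $C$ adjacent to $g$. Apply the growth argument to $W\cap C$. Either it expands until it covers all of $C$, or it acquires an interior grid point of a further cell.

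The cleaner route is to apply Steps 2–3 only to intervals inside a single cell $C$ of $G_0$, and to the restriction of $h_1$ there. By (G1) and the definition of laps, a lap of $h_1$ inside $C$ joins two points of $G_1$. So $h_1(V)$, for $V$ a lap of $h_1$, is an interval between grid points of $G_1$. Inductively, the image under $H$ of a lap of $H$ is an interval with endpoints in $G_0$.

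Consequently, once the growing subinterval contains a full lap of $H$, its image contains a nondegenerate interval with both endpoints in $G_0$. Such an interval contains a full cell $C'$ of $G_0$. Then (G3), applied successively through $h_1,\dots,h_r$, gives $H(C')=A_0$.

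Combining: for every open $U$ there is $N$ with $H^N(U)=A_0$, so $H$ is exact.
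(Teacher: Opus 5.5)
Your ingredients are the right ones: breakpoints of $H$ are sent into $G_0$, $|H'|\ge\lambda^r$, the image of a full lap of $H$ is an interval with distinct endpoints in $G_0$ and so contains a cell, and (G3) then gives $H(C')=A_0$. The gap is that the iteration uses the wrong stopping condition. As a result, the proof never shows that some iterate $H^n(U)$ contains a full lap of $H$, which is exactly what your final paragraph needs.

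In Step~3 the growth stops as soon as $H^n(U)$ has a point of $G_0$ in its interior. Such an interval can be a tiny neighbourhood of a grid point containing at most one breakpoint of $H$, so your last paragraph does not apply to it. Restarting the growth on $W\cap C$ is circular: it only produces another interval with an interior grid point, and no quantity improves. The ``cleaner route'' simply assumes that ``the growing subinterval contains a full lap of $H$''.

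The claim of Step~2 and the dichotomy of Step~3 are also false as stated. Conditions (G1)--(G3) allow $U$ to be the union of three consecutive laps of $H$, all mapped onto the same cell $C$ of $G_0$, with slopes below $\frac32\lambda^r$. Then $H(U)=C$ has no grid point in its interior, $U$ has two interior breakpoints, and $|H(U)|=|C|<\frac{\lambda^r}{2}|U|$. So neither ``at most one interior breakpoint'' nor the length bound holds, and ``iterating on the pieces $[\min U,b]$ and $[b',\max U]$'' proves nothing.

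The repair, which is the paper's argument, is to impose no hypothesis on $H(U)$ and to stop when $U$ contains a full lap. Suppose an interval $U$ contains no interval $[e,e']$ between consecutive breakpoints of $H$. Being an interval, it then has at most one breakpoint in its interior, so it splits into two pieces on which $H$ is affine, and $|H(U)|\ge\frac{\lambda^r}{2}|U|$ with no further case analysis. Since lengths are bounded by $|A_0|$, some $H^n(U)$ contains such an $[e,e']$. Then $H([e,e'])$ contains a cell $C'$ of $G_0$, and $H^{n+2}(U)\supset H(C')=A_0$. Incidentally, your own two-breakpoint case in Step~2 already finishes the proof: there $H(U)$ is an interval between two distinct points of $G_0$, hence contains a cell.
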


\begin{proof}
Put $H_k=h_k\circ\dots\circ h_1$ ($H_0=\mathrm{id}$). By (G1), $h_k(G_{k-1})\subset G_k$ for every $k$, hence
\begin{equation}\label{eq:gridpres}
h_r\circ\dots\circ h_{k+1}(G_k)\subset G_r=G_0\qquad(0\le k\le r).
\end{equation}

\emph{Step 1: the breakpoints of $H$ are mapped into $G_0$.} Let $E\subset A_0$ be the set of points $e$ such that
$H_{k-1}(e)\in\operatorname{Br}(h_k)$ for some $k$. Off $E$ every $h_k$ is affine near $H_{k-1}(e)$, so $H$ is affine and
nonconstant (its slope is a product of nonzero slopes) on the closure of each component of $A_0\setminus E$; moreover
$E$ is finite and contains the endpoints of $A_0$. If $e\in E$ and $H_{k-1}(e)\in\operatorname{Br}(h_k)$, then
$H_k(e)\in G_k$ by (G1) and $H(e)\in G_0$ by \eqref{eq:gridpres}. On each component of $A_0\setminus E$ we have
$|H'|\ge\lambda^r$ by (G2).

\emph{Step 2: a full piece covers a cell.} Let $e<e'$ be consecutive points of $E$. Then $H$ is strictly monotone on
$[e,e']$ and $H(e)\neq H(e')$ are points of $G_0$, so $H([e,e'])$ contains a cell $C$ of $G_0$. By (G3) and
surjectivity, $h_1(C)=A_1$, $h_2(A_1)=A_2,\dots$, so $H(C)=A_0$. Hence $H^2([e,e'])=A_0$.

\emph{Step 3: expansion.} Let $U\subset A_0$ be a nondegenerate interval which contains no interval $[e,e']$ as in
Step~2. Then $U$ contains at most one point of $E$ in its interior, so $U=U_1\cup U_2$ with $H$ affine on each $U_j$, and
\[
|H(U)|\ \ge\ \max_j|H(U_j)|\ \ge\ \lambda^r\max_j|U_j|\ \ge\ \tfrac{\lambda^r}{2}\,|U|,
\]
where $|\cdot|$ denotes length. Since $\lambda^r/2>1$ and the lengths of the intervals $H^n(U)$ are bounded by $|A_0|$,
some $H^n(U)$ contains an interval $[e,e']$ as in Step~2, and then $H^{n+2}(U)=A_0$. Every nonempty open subset of
$A_0$ contains a nondegenerate interval, so $H$ is exact.
\end{proof}

\begin{remark}\label{rem:rotation}
The hypotheses of Proposition~\ref{prop:exact} are invariant under cyclic rotation of the chain $(h_1,\dots,h_r)$.
Hence all the compositions $h_k\circ\dots\circ h_1\circ h_r\circ\dots\circ h_{k+1}\colon A_k\to A_k$ are exact as well.
\end{remark}

\section{Realization of lattices}\label{sec:lattices}

\begin{figure}[t]
\centering
\begin{tikzpicture}[scale=5.0]
  \draw[gray!60] (0,0) rectangle (1,1);
  \node[below left] at (0,0) {$0$}; \node[below] at (1,0) {$1$}; \node[left] at (0,1) {$1$};
  \def\Ja{0.12} \def\Jb{0.88} \def\Ka{0.18} \def\Kb{0.95}
  \draw[line width=2pt,orange!80!black] (-0.03,\Ja)--(-0.03,\Jb); \node[orange!70!black,left] at (-0.04,0.5) {$J$};
  \draw[line width=2pt,orange!80!black] (\Ka,-0.03)--(\Kb,-0.03); \node[orange!70!black,below] at (0.56,-0.04) {$K$};
  \foreach \p/\l in {0.27/p_0,0.52/p_1,0.86/p_2}{
    \draw[line width=2.2pt,blue!70] (\p,\Ja)--(\p,\Jb); \node[blue!70!black,above] at (\p,\Jb) {$\l$};}
  \foreach \q/\l in {0.22/q_0,0.47/q_1,0.71/q_2}{
    \draw[line width=2.2pt,teal] (\Ka,\q)--(\Kb,\q); \node[teal!70!black,right] at (\Kb,\q) {$\l$};}
  \foreach \p in {0.27,0.52,0.86}{\foreach \q in {0.22,0.47,0.71}{\fill[red!70!black] (\p,\q) circle (0.011);}}
\end{tikzpicture}
\caption{A lattice of order $3$, $(P\times J)\cup(K\times Q)$ with $P\subset K$ and $Q\subset J$. By
Theorem~\ref{thm:C}(1) it is the $\omega$-limit set of $(p_0,y)$ for a Cournot map with $f_1(p_i)=q_i$ and
$f_2(q_i)=p_{i+1}$, where $y$ has a dense $\chi^3$-orbit in $J$.}
\label{fig:lattice3}
\end{figure}

\begin{proof}[Proof of Theorem~\ref{thm:C}(1), necessity]
Let $L=\omega_F(z)$ be connected with more than one point, $z=(x,y)$ and $\omega_\varphi(x)=P$ finite, of cardinality
$m$. Then $\pi_2(L)$ is connected and, by Theorem~\ref{thm:B}, we are in case (I):
$L=(P\times J)\cup(K\times Q)$ with $J=\omega_\chi(y)$ an interval, $K=f_2(J)$, $Q\subset J$ and $P\subset K$.

The points $q_i$ are pairwise distinct: $f_2(q_i)=p_{i+1}$ by Lemma~\ref{lem:relations}(a), and the $p_i$ are
distinct. Hence $\#Q=\#P=m$.

If $J$ were degenerate, $J=\{c\}$, then $Q\subset J$ would force $m=1$ and $K=\{f_2(c)\}$, so $L$ would be a single
point. Hence $J$ is nondegenerate. Finally $K$ is nondegenerate, since $f_1(K)=f_1(N_0)=M_1=J$ by
Lemma~\ref{lem:relations}(c). So $L$ is a lattice.
\end{proof}

\begin{proof}[Proof of Theorem~\ref{thm:C}(1), sufficiency]
Let $\Lat=(P\times J)\cup(K\times Q)$ be a lattice of order $m$ and enumerate $P=\{p_0,\dots,p_{m-1}\}$ and
$Q=\{q_0,\dots,q_{m-1}\}$ arbitrarily. Let $G_K$ be the grid of $K$ formed by $P$ and the endpoints of $K$, and $G_J$
the grid of $J$ formed by $Q$ and the endpoints of $J$. Define $v_1\colon G_K\to G_J$ and $v_2\colon G_J\to G_K$ by
\[
v_1(p_i)=q_i,\qquad v_2(q_i)=p_{i+1}\quad(\text{indices modulo } m),
\]
and send the remaining grid points (endpoints of $K$ or $J$ not in $P$ or $Q$) to an endpoint of $J$ or $K$
respectively. These maps are well defined because the $p_i$ are distinct and the $q_i$ are distinct. By
Lemma~\ref{lem:good} with $\lambda=2$ there are $2$-good maps $g_1\colon K\to J$ and $g_2\colon J\to K$ with
$g_1|_{G_K}=v_1$ and $g_2|_{G_J}=v_2$. Extend them to continuous maps $f_1,f_2\colon\I\to\I$ which are constant on each
component of $\I\setminus K$ and of $\I\setminus J$, respectively.

By (G3), $f_1(K)=J$ and $f_2(J)=K$. By Proposition~\ref{prop:exact} applied to the chain $K\xrightarrow{g_1}J
\xrightarrow{g_2}K$ ($\lambda^2=4>2$) and Remark~\ref{rem:rotation}, the maps $\varphi|_K=g_2\circ g_1$ and
$\chi|_J=g_1\circ g_2$ are exact. Moreover
\[
\varphi(p_i)=f_2(f_1(p_i))=f_2(q_i)=p_{i+1},
\]
so $P$ is a periodic orbit of $\varphi$ of period $m$, and $\chi(q_i)=f_1(p_{i+1})=q_{i+1}$.

The map $\chi^m|_J$ is exact, hence transitive, so there is $y\in J$ with $\omega_{\chi^m}(y)=J$. Let $x=p_0$ and
$z=(x,y)$. Then $\omega_\varphi(x)=P$, with the labelling of Section~\ref{sec:prelim}, and
\[
M_i=\omega_{\chi^m}(\chi^i(y))=\chi^i\bigl(\omega_{\chi^m}(y)\bigr)=\chi^i(J)=J,\qquad N_i=f_2(M_i)=K .
\]
By \eqref{eq:comb}, $\omega_F(z)=(P\times J)\cup(K\times Q)=\Lat$, which is connected.
\end{proof}

\begin{proof}[Proof of Corollary~\ref{cor:D}]
By Theorem~\ref{thm:C}(1) and Remark~\ref{rem:swap}, $\omega_F(z)$ is a lattice $(P\times J)\cup(K\times Q)$ of some order
$m$. A vertical line $\{t\}\times\I$ meets $K\times Q$ in at most $m$ points, so the nondegenerate vertical segments of
the lattice lie in $P\times J$; and since $Q\subset J$, the line $\{p\}\times\I$, $p\in P$, meets the lattice exactly in
$\{p\}\times J$. Hence the maximal vertical segments are the $m$ segments $\{p\}\times J$, and likewise the maximal
horizontal segments are the $m$ segments $K\times\{q\}$. Every vertical one meets every horizontal one. The letter H has two
maximal vertical segments and one maximal horizontal segment; the letters E and F have one vertical segment and three,
respectively two, horizontal ones, and in the letter F the horizontal segments do not all have the same projection
either.
\end{proof}

\begin{example}\label{ex:LM}
The lattices $(\I\times P')\cup(P'\times\I)$ of \cite[Example~1]{LM}, obtained with $f_1$ the tent map and $f_2=\mathrm{id}$,
are the images under $\sigma$ of lattices with $J=K=\I$ and $P=Q=P'$. Theorem~\ref{thm:C} shows that the much
larger family of all lattices, with arbitrary intervals $J,K$ and arbitrary finite sets $P\subset K$, $Q\subset J$ of
equal cardinality, is realized.
\end{example}

\begin{remark}\label{rem:unimodal}
The reaction functions produced by the proof have many turning points. In the economic interpretation one usually
requires unimodal reaction functions, for instance logistic ones \cite{Pu}. Whether every lattice is realized by
a Cournot map with unimodal $f_1,f_2$ is an open question (Problem~\ref{prob:unimodal}).
\end{remark}

\section{Realization of alternating combs}\label{sec:alternating}

\begin{lemma}\label{lem:disjoint}
The $2m$ segments of an alternating comb are pairwise disjoint.
\end{lemma}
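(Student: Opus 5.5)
The plan is to check the three kinds of pairs separately: two vertical segments, two horizontal segments, and one vertical with one horizontal. Throughout we use the parity convention of Definition~\ref{def:alternating}, with indices of $J,K$ taken modulo $2$.

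\emph{Two vertical segments.} The segments $\{p_i\}\times J_i$ and $\{p_{i'}\}\times J_{i'}$ with $i\neq i'$ lie on the distinct vertical lines $x=p_i$ and $x=p_{i'}$, because the $p_i$ are pairwise distinct. Hence they are disjoint.

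\emph{Two horizontal segments.} In the same way, $K_i\times\{q_i\}$ and $K_{i'}\times\{q_{i'}\}$ with $i\neq i'$ lie on distinct horizontal lines, since the $q_i$ are pairwise distinct.

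\emph{A vertical and a horizontal segment.} This is the only case needing work. The segments $\{p_i\}\times J_i$ and $K_j\times\{q_j\}$ can only meet at the point $(p_i,q_j)$. So they meet if and only if
\[
q_j\in J_i \quad\text{and}\quad p_i\in K_j .
\]
We test each condition using the definition. We have $q_j\in J_{j+s}\setminus\{c\}$. Since $J_0\cap J_1=\{c\}$, the point $q_j$ lies in exactly one of $J_0,J_1$, namely $J_{j+s}$. So $q_j\in J_i$ forces $i\equiv j+s\pmod 2$.

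Likewise $p_i\in K_{i+s+1}\setminus\{d\}$, and $K_0\cap K_1=\{d\}$, so $p_i$ lies only in $K_{i+s+1}$. Thus $p_i\in K_j$ forces $j\equiv i+s+1\pmod 2$.

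Adding the two congruences gives
\[
i+j\equiv i+j+2s+1\equiv i+j+1 \pmod 2,
\]
which is impossible. Hence no vertical segment meets a horizontal one, and all $2m$ segments are pairwise disjoint.

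The only delicate point is the parity bookkeeping in the mixed case. It hinges on excluding $c$ and $d$ from the admissible positions, so that membership in $J_0$ or $J_1$ (respectively $K_0$ or $K_1$) determines the parity uniquely.
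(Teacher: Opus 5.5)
Your proposal is correct and follows the paper's proof essentially verbatim: distinct $p_i$ and distinct $q_i$ separate segments of the same orientation. For a vertical and a horizontal segment, $q_j\notin\{c\}$ and $p_i\notin\{d\}$ force $i\equiv j+s$ and $j\equiv i+s+1 \pmod 2$, and adding these gives the contradiction $0\equiv 1\pmod 2$.
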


\begin{proof}
Two vertical segments lie on different vertical lines since the $p_i$ are distinct, and similarly for horizontal
ones. Suppose that $\{p_i\}\times J_i$ meets $K_j\times\{q_j\}$. Then $q_j\in J_i$ and $p_i\in K_j$. Since
$q_j\in J_{j+s}\setminus\{c\}$ and $J_0\cap J_1=\{c\}$, we get $i\equiv j+s\pmod2$; since $p_i\in K_{i+s+1}\setminus\{d\}$,
we get $j\equiv i+s+1\pmod2$. Adding, $0\equiv 2s+1\equiv1\pmod 2$, a contradiction.
\end{proof}

\begin{figure}[t]
\centering
\begin{tikzpicture}[scale=5.0]
  \draw[gray!60] (0,0) rectangle (1,1);
  \node[below left] at (0,0) {$0$}; \node[below] at (1,0) {$1$}; \node[left] at (0,1) {$1$};
  \draw[gray!40,dashed] (0.5,0)--(0.5,1); \draw[gray!40,dashed] (0,0.5)--(1,0.5);
  \node[left] at (0,0.5) {$c$}; \node[below] at (0.5,0) {$d$};
  \node[left,orange!70!black] at (0,0.25) {$J_0$}; \node[left,orange!70!black] at (0,0.75) {$J_1$};
  \node[below,orange!70!black] at (0.25,0) {$K_0$}; \node[below,orange!70!black] at (0.75,0) {$K_1$};
  \foreach \p/\a/\b/\l in {0.6/0/0.5/p_0,0.2/0.5/1/p_1,0.85/0/0.5/p_2,0.4/0.5/1/p_3}{
    \draw[line width=2.2pt,blue!70] (\p,\a)--(\p,\b);}
  \node[blue!70!black,above] at (0.6,0.5) {\small$p_0$}; \node[blue!70!black,above] at (0.2,1) {\small$p_1$};
  \node[blue!70!black,above] at (0.85,0.5) {\small$p_2$}; \node[blue!70!black,above] at (0.4,1) {\small$p_3$};
  \foreach \q/\a/\b in {0.15/0/0.5,0.7/0.5/1,0.35/0/0.5,0.9/0.5/1}{
    \draw[line width=2.2pt,teal] (\a,\q)--(\b,\q);}
  \node[teal!70!black,right] at (1,0.7) {\small$q_1$}; \node[teal!70!black,right] at (1,0.9) {\small$q_3$};
  \node[teal!70!black] at (0.27,0.11) {\small$q_0$}; \node[teal!70!black] at (0.27,0.39) {\small$q_2$};
\end{tikzpicture}
\caption{An alternating comb of order $4$ with $s=0$: $J_0=K_0=[0,\frac12]$, $J_1=K_1=[\frac12,1]$, vertical segments
$\{p_i\}\times J_i$ and horizontal segments $K_i\times\{q_i\}$, with $q_i\in J_i$ and $p_i\in K_{i+1}$. The eight
segments are pairwise disjoint, while both projections are $[0,1]$. By Theorem~\ref{thm:C}(2) this set is an
$\omega$-limit set of a Cournot map.}
\label{fig:altcomb}
\end{figure}

\begin{proof}[Proof of Theorem~\ref{thm:C}(2), necessity]
Let $L=\omega_F(z)$ be as in Theorem~\ref{thm:C}(2). By Theorem~\ref{thm:B} and Proposition~\ref{prop:shapes} we are in case (II): $m$ is even, $J=\omega_\chi(y)$
is a nondegenerate interval, $\chi|_J$ is transitive but not mixing, $J=J_0\cup J_1$ with $J_0\cap J_1=\{c\}$,
$\chi(J_0)=J_1$, $\chi(J_1)=J_0$, and $M_i=J_{(i+k)\bmod2}$. Exchanging the names of $J_0$ and $J_1$ if necessary we
may assume $k=0$, so $M_i=J_i$ (indices of $J$ modulo $2$). Put $K_a=f_2(J_a)$ and $K=f_2(J)=K_0\cup K_1$, so that
$N_i=K_i$ and, by \eqref{eq:comb},
\[
L=\bigcup_i\bigl(\{p_i\}\times J_i\bigr)\cup\bigcup_i\bigl(K_i\times\{q_i\}\bigr).
\]

\emph{The intervals $K_0,K_1$.} They are compact intervals, nondegenerate because $f_1(K_a)=f_1(N_a)=M_{a+1}=J_{a+1}$.
By Proposition~\ref{prop:BLC}(c), $K=f_2(\omega_\chi(y))=\omega_\varphi(f_2(y))$, so $K$ is an
interval which is an $\omega$-limit set of $\varphi$ with nonempty interior; hence some point of the $\varphi$-orbit of
$f_2(y)$ lies in $K$ and has $K$ as its $\omega$-limit set, and $\varphi|_K$ is transitive. If $\varphi|_K$ were mixing,
$\varphi^m|_K$ would be transitive and, by Lemma~\ref{lem:noinvariant} applied to the closed $\varphi^m$-invariant sets
$N_0=K_0$ and $N_1=K_1$, we would get $K_0=K_1=K$ and then $J_1=f_1(K_0)=f_1(K_1)=J_0$, which is absurd. So $\varphi|_K$ is
not mixing, and Theorem~\ref{thm:BM} gives a splitting $K=K_0'\cup K_1'$ with $K_0'\cap K_1'=\{d\}$ exchanged by $\varphi$.
Exactly as in the proof of Proposition~\ref{prop:shapes}, the closed $\varphi^{2m}$-invariant set $N_0$ with nonempty interior
is $K$, $K_0'$ or $K_1'$; it is not $K$, since then $N_1=\varphi(N_0)=K$ as well and we are back in the previous
contradiction. Hence $\{K_0,K_1\}=\{K_0',K_1'\}$ and $K_0\cap K_1=\{d\}$, with $\varphi(d)=d$.

\emph{The points $q_i$.} By Proposition~\ref{prop:shapes} and Lemma~\ref{lem:relations}, $Q\subset J$ is a periodic orbit of $\chi$, and its points are
distinct because $f_2(q_i)=p_{i+1}$; so its period is $m\ge2$, and $c\notin Q$ since $\chi(c)=c$. As $\chi(J_a)=J_{a+1}$
and $\chi(q_i)=q_{i+1}\neq c$, the points $q_i$ alternate between $J_0\setminus\{c\}$ and $J_1\setminus\{c\}$: there is
$s\in\{0,1\}$ with $q_i\in J_{i+s}\setminus\{c\}$ for all $i$ (here we use that $m$ is even).

\emph{The points $p_i$.} We have $p_{i+1}=f_2(q_i)\in f_2(J_{i+s})=K_{i+s}=K_{(i+1)+s+1}$, and $p_{i+1}\neq d$ because
$P$ has period $m\ge2$ and $\varphi(d)=d$. Thus $p_i\in K_{i+s+1}\setminus\{d\}$ for all $i$, and $L$ is an alternating
comb.
\end{proof}

\begin{figure}[t]
\centering
\begin{tikzpicture}[>=Stealth,node distance=2.3cm]
  \node (K0) {$K_0$};
  \node[right=of K0] (J1) {$J_1$};
  \node[right=of J1] (K1) {$K_1$};
  \node[right=of K1] (J0) {$J_0$};
  \draw[->] (K0)--node[above]{$f_1$} (J1);
  \draw[->] (J1)--node[above]{$f_2$} (K1);
  \draw[->] (K1)--node[above]{$f_1$} (J0);
  \draw[->] (J0) to[bend left=22] node[below]{$f_2$} (K0);
  \node[below=0.15cm of K0,font=\small] {$d\mapsto c$};
  \node[below=0.15cm of J1,font=\small] {$c\mapsto d$};
  \node[below=0.15cm of K1,font=\small] {$d\mapsto c$};
  \node[below=0.15cm of J0,font=\small] {$c\mapsto d$};
\end{tikzpicture}
\caption{The chain of $2$-good maps used to realize an alternating comb. Each arrow is a piecewise linear surjection
with prescribed values on the grid ($p_i\mapsto q_i$ for $f_1$, $q_i\mapsto p_{i+1}$ for $f_2$, and the common endpoints
$c,d$ sent to each other). Going once around the cycle gives $\varphi^2|_{K_0}$, and starting at another vertex gives
$\chi^2|_{J_1}$, $\varphi^2|_{K_1}$, $\chi^2|_{J_0}$; all four are exact by Proposition~\ref{prop:exact}.}
\label{fig:chain}
\end{figure}

\begin{proof}[Proof of Theorem~\ref{thm:C}(2), sufficiency]
Let $\Gamma$ be an alternating comb, with the notation of Definition~\ref{def:alternating}. For
$a\in\{0,1\}$ let $G_{K_a}$ be the grid of $K_a$ formed by its endpoints (among them $d$) and the points $p_i$ lying in
$K_a$, and $G_{J_a}$ the grid of $J_a$ formed by its endpoints (among them $c$) and the points $q_i$ lying in $J_a$.

We define $v_1^a\colon G_{K_a}\to G_{J_{a+1}}$ and $v_2^a\colon G_{J_a}\to G_{K_a}$ by
\[
v_1^a(p_i)=q_i,\quad v_1^a(d)=c,\qquad v_2^a(q_i)=p_{i+1},\quad v_2^a(c)=d,
\]
and by sending the other endpoint of $K_a$ (resp.\ $J_a$), if it is not one of the $p_i$ (resp.\ $q_i$), to $c$
(resp.\ $d$). These assignments are consistent with
the targets: if $p_i\in K_a$, then $a\equiv i+s+1$ and $q_i\in J_{i+s}=J_{a+1}$; if $q_i\in J_a$, then $a\equiv i+s$ and
$p_{i+1}\in K_{i+s}=K_a$. They are well defined because the $p_i$ are distinct, the $q_i$ are distinct, $d\notin P$ and
$c\notin Q$.

By Lemma~\ref{lem:good} there are $2$-good maps $g_1^a\colon K_a\to J_{a+1}$ and $g_2^a\colon J_a\to K_a$ extending
$v_1^a$ and $v_2^a$. Since $g_1^0(d)=g_1^1(d)=c$, the maps $g_1^0,g_1^1$ define a continuous map $g_1\colon K\to J$;
likewise $g_2^0,g_2^1$ define $g_2\colon J\to K$. Extend $g_1,g_2$ to $f_1,f_2\colon\I\to\I$, constant on the
components of $\I\setminus K$ and $\I\setminus J$.

By (G3), $f_1(K_a)=J_{a+1}$ and $f_2(J_a)=K_a$, so $\chi(J_a)=J_{a+1}$ and $\varphi(K_a)=K_{a+1}$. The chain of
Figure~\ref{fig:chain},
\[
K_0\xrightarrow{\ g_1^0\ }J_1\xrightarrow{\ g_2^1\ }K_1\xrightarrow{\ g_1^1\ }J_0\xrightarrow{\ g_2^0\ }K_0,
\]
satisfies the hypotheses of Proposition~\ref{prop:exact} with $r=4$ and $\lambda=2$; by Remark~\ref{rem:rotation},
$\chi^2|_{J_0}=g_1^1\circ g_2^1\circ g_1^0\circ g_2^0$ is exact. As $m$ is even, $\chi^m|_{J_0}=(\chi^2|_{J_0})^{m/2}$ is exact,
so there is $y\in J_0$ with $\omega_{\chi^m}(y)=J_0$. Put $x=p_0$ and $z=(x,y)$.

As in the lattice case, $\varphi(p_i)=f_2(q_i)=p_{i+1}$, so $\omega_\varphi(x)=P$ with the correct labelling, and
$f_1(p_i)=q_i$. Furthermore
\[
M_i=\chi^i\bigl(\omega_{\chi^m}(y)\bigr)=\chi^i(J_0)=J_i,\qquad N_i=f_2(J_i)=K_i .
\]
By \eqref{eq:comb}, $\omega_F(z)=\Gamma$. Its vertical projection $J\cup Q=J$ is connected, and $\Gamma$ is not connected
by Lemma~\ref{lem:disjoint}.
\end{proof}

\begin{example}\label{ex:LM36}
For $m=2$, $s=0$, $J_0=K_0=[0,\frac12]$, $J_1=K_1=[\frac12,1]$ the alternating combs are the sets
\[
\bigl(\{p_0\}\times[0,\tfrac12]\bigr)\cup\bigl(\{p_1\}\times[\tfrac12,1]\bigr)\cup\bigl([0,\tfrac12]\times\{q_0\}\bigr)
\cup\bigl([\tfrac12,1]\times\{q_1\}\bigr)
\]
with $p_0\in(\frac12,1]$, $p_1\in[0,\frac12)$, $q_0\in[0,\frac12)$, $q_1\in(\frac12,1]$. Up to the swap $\sigma$, the
disconnected $\omega$-limit set of \cite[Fig.~36]{LM} is of this type. Theorem~\ref{thm:B} shows that all
choices of the four points occur.
\end{example}

\begin{table}[t]
\centering
\renewcommand{\arraystretch}{1.25}
\begin{tabular}{@{}p{3.3cm}p{5.6cm}p{2.6cm}@{}}
\toprule
Shape of $\omega_F(z)$ & Data & Realized by \\
\midrule
point & --- & constant maps \\
lattice of order $m\ge1$ & $J,K$; $P\subset K$, $Q\subset J$, $\#P=\#Q=m$ & Theorem~\ref{thm:C}(1) \\
alternating comb, $m$ even & $J_0\cup J_1$, $K_0\cup K_1$, $s$; $p_i,q_i$ as in Def.~\ref{def:alternating} & Theorem~\ref{thm:C}(2)\\
\bottomrule
\end{tabular}
\caption{All $\omega$-limit sets $\omega_F(z)$ of Cournot maps at points $z=(x,y)$ with $\omega_{f_2\circ f_1}(x)$ finite
and $\pi_2(\omega_F(z))$ connected (Theorems~\ref{thm:B} and~\ref{thm:C}).}
\label{tab:summary}
\end{table}

\section{Almost every point}\label{sec:ae}

\begin{definition}\label{def:nonsingular}
A continuous map $f\colon\I\to\I$ is \emph{nonsingular} if $\Leb(f(N))=0$ and $\Leb(f^{-1}(N))=0$ for every Lebesgue null
set $N\subset\I$.
\end{definition}

$C^1$ maps with finitely many critical points and piecewise linear maps without intervals of constancy are
nonsingular. Compositions of nonsingular maps are nonsingular. If $f$ is nonsingular and $\mu$ is a probability
absolutely continuous with respect to $\Leb$, then so is $f_*\mu$.

\begin{definition}\label{def:regular}
An interval map $\varphi$ is \emph{metrically regular} if there are finitely many closed sets $A_1,\dots,A_k\subset\I$
with $\varphi(A_j)=A_j$ such that $\omega_\varphi(x)\in\{A_1,\dots,A_k\}$ for Lebesgue almost every $x\in\I$, and each $A_j$
is of one of the following types:
\begin{enumerate}
\item[\upshape(P)] a periodic orbit;
\item[\upshape(S)] a \emph{solenoid}: a Cantor set such that for every $\eps>0$ there are a nondegenerate compact interval
$W$ and $p\ge1$ with $\varphi^p(W)\subset W$, $A_j\subset\bigcup_{i<p}\varphi^i(W)$ and $|\varphi^i(W)|<\eps$ for $0\le i<p$;
\item[\upshape(I)] a cycle of intervals carrying a $\varphi$-invariant probability $\mu_j$, equivalent to Lebesgue measure on
$A_j$, such that for some $\ell\ge1$
\[
\mu_j=\tfrac1\ell\bigl(\mu_{j,1}+\dots+\mu_{j,\ell}\bigr),
\]
where each $\mu_{j,t}$ is a $\varphi^\ell$-invariant probability, exact for $\varphi^\ell$, whose support has nonempty
interior.
\end{enumerate}
\end{definition}

\begin{remark}\label{rem:regular}
(1) For $C^3$ interval maps with non-flat critical points, Lebesgue almost every point has as
$\omega$-limit set one of finitely many attractors, which are periodic orbits, cycles of intervals or Cantor sets
\cite{BL91,BPP,vSV}; the Cantor attractors are solenoids or \emph{wild} attractors \cite{BKNS}. So
Definition~\ref{def:regular} excludes wild attractors and cycles of intervals without an absolutely continuous
invariant probability of the required kind.

(2) The exactness requirement in (I) holds with $\ell=1$ whenever $\varphi|_{A_j}$ is exact with respect to an invariant
probability equivalent to Lebesgue measure, as in the examples below. More generally, Ledrappier \cite{Le81} proved
that for piecewise $C^2$ maps with non-flat critical points an ergodic absolutely continuous invariant probability
with positive entropy, all of whose powers are ergodic, is isomorphic to a Bernoulli shift.

(3) Hyperbolic maps, for which every critical point lies in the basin of an attracting periodic orbit, are metrically
regular with all attractors of type (P), and they are dense in the $C^r$ topology \cite{KSvS}. The map
$\varphi=g\circ g$, $g(t)=4t(1-t)$, is metrically regular with a single attractor $[0,1]$ of type (I): $g$ is conjugate
to the tent map by $t=\sin^2(\pi s/2)$, so the measure with density $1/(\pi\sqrt{t(1-t)})$ is $g$-invariant, exact,
and equivalent to Lebesgue measure.
\end{remark}

From now on $F(x,y)=(f_2(y),f_1(x))$ has nonsingular reaction functions and $\varphi=f_2\circ f_1$ is metrically
regular, with attractors $A_1,\dots,A_k$. Let $X_0$ be the full-measure set of $x$ with $\omega_\varphi(x)\in\{A_j\}$.

\begin{lemma}\label{lem:typical}
\begin{enumerate}[\upshape(a)]
\item $\omega_\chi(y)=f_1\bigl(\omega_\varphi(f_2(y))\bigr)$ for every $y\in\I$.
\item The set $Z_0=\{(x,y):x\in X_0,\ f_2(y)\in X_0\}$ has full Lebesgue measure in $\I^2$.
\item If $A_j$ is of type \textup{(I)}, then $B_j=f_1(A_j)$ is a cycle of intervals of $\chi$, and $\nu_j=(f_1)_*\mu_j$ is a
$\chi$-invariant probability equivalent to Lebesgue measure on $B_j$, which is the average of the $\chi^\ell$-invariant
probabilities $\nu_{j,t}=(f_1)_*\mu_{j,t}$; each $\nu_{j,t}$ is exact for $\chi^\ell$ and its support has nonempty interior.
\end{enumerate}
\end{lemma}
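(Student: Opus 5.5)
The plan is to treat the three parts in order, since each is a short consequence of the semiconjugacies \eqref{eq:semiconj}, Proposition~\ref{prop:BLC}(c) and nonsingularity.

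\emph{(a).} Proposition~\ref{prop:BLC}(c) with $u=f_2(y)$ gives $f_1(\omega_\varphi(f_2(y)))=\omega_\chi(f_1(f_2(y)))=\omega_\chi(\chi(y))$. Since $\omega$-limit sets are unchanged by moving one step along the orbit, this equals $\omega_\chi(y)$.

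\emph{(b).} The complement of $Z_0$ is contained in $\bigl((\I\setminus X_0)\times\I\bigr)\cup\bigl(\I\times f_2^{-1}(\I\setminus X_0)\bigr)$. The first set is null because $X_0$ has full measure. The second is null because $f_2$ is nonsingular, so $\Leb(f_2^{-1}(\I\setminus X_0))=0$. Fubini then gives the claim.

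\emph{(c).} This is the only part requiring some care. First, $\chi\circ f_1=f_1\circ\varphi$ and $\varphi(A_j)=A_j$ give $\chi(B_j)=B_j$. If $A_j=K_0\cup\dots\cup K_{r-1}$ is a cycle of intervals, then the $f_1(K_i)$ are compact intervals with $\chi(f_1(K_i))=f_1(K_{i+1})$. They are nondegenerate, since $f_1$ nonsingular forbids intervals of constancy (otherwise $f_1^{-1}$ of a point would have positive measure).

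The main obstacle is disjointness of the $f_1(K_i)$. I would use $f_2(f_1(K_i))=\varphi(K_i)=K_{i+1}$: if $f_1(K_i)$ and $f_1(K_{i'})$ overlap, their union $U$ is an interval with $f_2(U)=K_{i+1}\cup K_{i'+1}$, which is connected. Hence $i=i'$, unless the overlap is a single point, i.e.\ the intervals only touch. Touching intervals can simply be merged; after merging, $B_j$ is still a finite union of disjoint nondegenerate intervals permuted cyclically by $\chi$, so it is a cycle of intervals (possibly of smaller period). I expect this step to be the one needing the most care, and merging is the fallback.

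Next, $\nu_j=(f_1)_*\mu_j$ is $\chi$-invariant because $\chi_*(f_1)_*\mu_j=(f_1)_*\varphi_*\mu_j=(f_1)_*\mu_j$. Likewise each $\nu_{j,t}$ is $\chi^\ell$-invariant, and $\nu_j$ is their average. Absolute continuity of $\nu_j$ follows from nonsingularity of $f_1$: if $\Leb(N)=0$ then $\mu_j(f_1^{-1}N)=0$. Conversely, if $E\subset B_j$ has positive Lebesgue measure, then $f_1^{-1}(E)\cap A_j$ has positive Lebesgue measure. Indeed, $E\subset f_1(A_j)$ and $\Leb(f_1(N'))=0$ for null $N'$, so $E=f_1(f_1^{-1}(E)\cap A_j)$ cannot come from a null set. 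Since $\mu_j\sim\Leb$ on $A_j$, this gives $\nu_j(E)>0$, so $\nu_j$ is equivalent to Lebesgue measure on $B_j$.

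Exactness passes to factors. If $E$ lies in the tail $\sigma$-algebra $\bigcap_n\chi^{-n\ell}\mathcal B$, then $f_1^{-1}E$ lies in the tail $\sigma$-algebra of $\varphi^\ell$, because $f_1^{-1}\chi^{-n\ell}=\varphi^{-n\ell}f_1^{-1}$. Hence $\nu_{j,t}(E)=\mu_{j,t}(f_1^{-1}E)\in\{0,1\}$. Finally, $\operatorname{supp}\nu_{j,t}\supset f_1(\operatorname{supp}\mu_{j,t})$, which contains $f_1$ of a nondegenerate interval and is therefore a nondegenerate interval, since $f_1$ has no intervals of constancy.
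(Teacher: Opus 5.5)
Your proof is correct. Parts (a), (b) and the measure-theoretic half of (c) follow the paper's argument. You spell out two points the paper only asserts: why exactness passes to the factor $f_1$ (via the tail $\sigma$-algebra), and why $\operatorname{supp}\nu_{j,t}\supset f_1(\operatorname{supp}\mu_{j,t})$.

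You genuinely differ in showing that $B_j$ is a cycle of intervals. The paper takes a point $x$ with $\omega_\varphi(x)=A_j$ and gets $B_j=\omega_\chi(f_1(x))$ from Proposition~\ref{prop:BLC}(c). This set has nonempty interior because $f_1$ has no intervals of constancy. The paper then invokes the theorem of \cite{ABCP,Sh} that an $\omega$-limit set with nonempty interior is a transitive cycle of intervals. You instead work directly with the components $K_i$ of $A_j$ and use $f_2\circ f_1=\varphi$ to separate their images.

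The paper's route gives transitivity of $\chi|_{B_j}$ for free. However, it relies on a classification theorem and presupposes a point whose $\omega$-limit set is exactly $A_j$. Your route is elementary and needs only that $A_j$ is a cycle of intervals. It also shows that $B_j$ has the same number of components as $A_j$.

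One correction: your hedge about touching intervals is unnecessary. Suppose $f_1(K_i)\cap f_1(K_{i'})$ contains even a single point. Then the union $U$ is still an interval, so $f_2(U)=K_{i+1}\cup K_{i'+1}$ is connected. For $i\neq i'$ this is impossible, since $K_{i+1}$ and $K_{i'+1}$ are disjoint nonempty closed sets. So the images are pairwise disjoint outright. You can drop the merging fallback, which as stated would in any case still require checking that $\chi$ permutes the merged blocks cyclically.
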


\begin{proof}
(a) $\chi^{n+1}(y)=f_1(\varphi^n(f_2(y)))$, and $f_1$ is continuous. (b) $\I\setminus X_0$ is null and $f_2$ is
nonsingular, so $\{y:f_2(y)\notin X_0\}$ is null; apply Fubini.

(c) By \eqref{eq:semiconj}, $f_1$ is a factor map from $(\varphi,\mu_j)$ to $(\chi,\nu_j)$ and from $(\varphi^\ell,\mu_{j,t})$ to
$(\chi^\ell,\nu_{j,t})$; factors of invariant, respectively exact, systems are invariant, respectively exact. If $\omega_\varphi(x)=A_j$, then
$B_j=\omega_\chi(f_1(x))$ by Proposition~\ref{prop:BLC}(c); as $f_1$ is nonsingular, it is not constant on any interval,
so $B_j$ and $\operatorname{supp}\nu_{j,t}=f_1(\operatorname{supp}\mu_{j,t})$ have nonempty interior, and $B_j$ is a
cycle of intervals of $\chi$. Since $f_1$ is nonsingular, $\nu_j\ll\Leb$. Conversely, if
$E\subset B_j$ has positive Lebesgue measure, then $f_1^{-1}(E)\cap A_j$ is not null, for otherwise its image, which
contains $E$, would be null; as $\mu_j$ is equivalent to Lebesgue measure on $A_j$, $\nu_j(E)=\mu_j(f_1^{-1}(E))>0$.
\end{proof}

\begin{lemma}[Solenoids are separated]\label{lem:solenoid}
Let $A$ be a solenoid of $\varphi$.
\begin{enumerate}[\upshape(a)]
\item If $K$ is a transitive cycle of intervals of $\varphi$, then $A\cap\Int K=\emptyset$ and $A\cup K$ is not connected.
\item If $N\subset\I$ is closed and nowhere dense, then $A\cup N$ is not connected.
\end{enumerate}
\end{lemma}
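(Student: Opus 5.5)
The plan is to use only the defining property of a solenoid: for every $\eps>0$ there are a nondegenerate compact interval $W$ and $p\ge1$ with $\varphi^p(W)\subset W$, $A\subset\bigcup_{i<p}\varphi^i(W)$ and $|\varphi^i(W)|<\eps$ for $0\le i<p$. Write $W_i=\varphi^i(W)$. Since $A$ is a Cantor set, it is nowhere dense and has no isolated points.

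For (a), the first step is to show $A\cap\Int K=\emptyset$. Suppose $a\in A\cap\Int K$. Choose $\eps$ smaller than the distance from $a$ to the boundary of the component of $K$ containing $a$. Some $W_i$ contains $a$; it is an interval of length $<\eps$, so $W_i\subset\Int K$. Moreover $W_i$ is nondegenerate: $W$ is nondegenerate, and we may assume $\varphi$ is not constant on $W$, otherwise $A$ would meet $W_i$ in a single point, contradicting that $A$ has no isolated points. The union $U=\bigcup_{i<p}W_i$ is closed and forward invariant, because $\varphi(W_{p-1})=\varphi^p(W)\subset W$. Now $\varphi|_K$ is transitive, and $U\cap K$ is a closed $\varphi$-invariant subset of $K$ with nonempty interior, since it contains the nondegenerate interval $W_i$. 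By the analogue of Lemma~\ref{lem:noinvariant} for cycles of intervals (same proof), $U\cap K=K$, so $K\subset U$. But $|U|\le p\eps$, and this alone is not yet a contradiction. The fix is that, as $\eps\to0$, the covering intervals can be chosen nested with period $p\to\infty$, so that the total length $\sum_{i<p}|W_i|\to0$. Alternatively: choose a point $k\in\Int K$ outside the closed nowhere dense set $A$, and a nondegenerate interval $V\subset\Int K$ around $k$ disjoint from $A$. Since each $W_i$ meets $A$ (strong invariance, $A$ being an $\omega$-limit set forces each periodic interval to contain points of $A$), a $W_i$ of length $<\eps$ containing a point of $V$ with $\eps$ small lies near $A$, so $V\not\subset U$ once $\eps<\operatorname{dist}(k,A)$. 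This contradicts $K\subset U$. So $A\cap\Int K=\emptyset$.

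Disconnectedness of $A\cup K$ then follows. $K$ has finitely many components and $A$ only meets $K$ in the finitely many endpoints of those components. If $A\cup K$ were connected, it would be an interval containing $A$. An interval containing a Cantor set $A$ and the set $K$, with $A\cap\Int K=\emptyset$, must contain the gaps of $A$; those gaps would have to be covered by $K$, forcing points of $A$ (accumulation points from both sides of a gap) into $\Int K$ or would require $A$ to be a finite set of endpoints. In either case we reach a contradiction.

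For (b), if $A\cup N$ were connected, it would be a closed interval. But the union of two closed nowhere dense sets is nowhere dense, and a nowhere dense interval is a point, whereas $A$ is infinite. This part is immediate.

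The main obstacle is making the step in (a) where $A\cap\Int K=\emptyset$ rigorous, specifically producing $W_i$ lying inside $\Int K$ together with a point of $\Int K\setminus U$. I would first try the argument via the point $k\in\Int K\setminus A$ and $\eps<\operatorname{dist}(k,A)$. This needs each $W_i$ to meet $A$; if that fails, I would replace $W$ by the smaller interval obtained from the nested periodic intervals of the solenoid, which do meet $A$.
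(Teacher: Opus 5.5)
Your proof is correct in outline, but at the key step $A\cap\Int K=\emptyset$ it takes a different route from the paper.

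\textbf{Comparison with the paper.} The paper localizes to one of the intervals $L\subset K$ on which $g=\varphi^{2r}$ is mixing (Theorem~\ref{thm:BM}). It then uses the growth property of mixing interval maps: the short interval $W'\ni a$ satisfies $g^{np}(W')\subset W'$, yet $g^{np}(W')$ must eventually contain an interval of length $|L|/3>\eps$.

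You use only transitivity of $\varphi|_K$. The forward-invariant union $U=\bigcup_{i<p}W_i$ meets $K$ in a closed invariant set with nonempty interior, so $K\subset U$ by the argument of Lemma~\ref{lem:noinvariant}. Since every $W_i$ meets $A$ and is shorter than $\eps$, $U$ lies in the $\eps$-neighborhood of $A$. That neighborhood misses a fixed point $k\in\Int K\setminus A$ once $\eps<\operatorname{dist}(k,A)$, a contradiction.

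Your route avoids the Barge--Martin splitting and the mixing growth lemma altogether, and it does not use the interval structure of $K$. The paper's version stays local to $L$ and never needs the global set $U$.

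\textbf{Points to tighten.}

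First, that every $W_i$ meets $A$ is not something to hedge about. Since $A\neq\emptyset$, some $W_j$ meets $A$. Then $\varphi^{p-j}(A\cap W_j)\subset A\cap\varphi^p(W)\subset A\cap W$, and $\varphi^i(A\cap W)\subset A\cap W_i$ for every $i$. So drop the ``nested intervals with $p\to\infty$'' alternative, which (S) does not supply.

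Second, your reason for the nondegeneracy of the $W_i$ containing $a$ is not valid. If $W_i=\{a\}$, other $W_j$ may still contain points of $A$ accumulating at $a$, so $a$ need not be isolated in $A$. Either use nonsingularity of $\varphi$, as the paper does, or argue as follows. A degenerate $W_i$ makes $\varphi^p(W)=\varphi^{p-i}(W_i)$ a point, hence every $\varphi^p(W_j)=\varphi^j(\varphi^p(W))$ a point. Then $A=\varphi^p(A)$ would be finite.

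Third, your disconnection argument is only a sketch. A clean version: suppose $A\cup K$ were an interval. Each bounded gap $(g_1,g_2)$ of $A$ is then contained in $A\cup K$ and disjoint from $A$, so it lies in a single component $C$ of $K$. By the first part, $g_1,g_2\in A\cap C\subset\{\min C,\max C\}$, so $C=[g_1,g_2]$. Distinct gaps therefore give distinct components of $K$, so $A$ would have only finitely many gaps, whereas a Cantor set has infinitely many. (The paper instead takes $a\in A\setminus K$ and finds a point $t\notin A\cup K$ between two points of $A$ in the component of $\I\setminus K$ containing $a$.)
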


\begin{proof}
(b) $A\cup N$ is closed, nowhere dense and infinite, hence not an interval, hence not connected.

(a) Suppose that $A\cap\Int K\neq\emptyset$. This set is relatively open in the Cantor set $A$, hence infinite, so it
contains a point $a$ in the interior of one of the finitely many intervals $L\subset K$ given in
Section~\ref{sec:prelim}, on which $g=\varphi^{2r}$ is mixing, $r$ being the number of components of $K$. Choose
$0<\eps<\min\{\operatorname{dist}(a,\partial L),|L|/3\}$, and let $W,p$ be as in (S). Some $W'=\varphi^i(W)$ contains $a$;
it is a nondegenerate interval (since $\varphi$ is nonsingular) of length $<\eps$, so $W'\subset\Int L$, and
$\varphi^p(W')\subset W'$. Hence $g^{np}(W')\subset W'$ for all $n\ge1$. On the other hand, by the property of mixing
interval maps recalled in Section~\ref{sec:prelim}, $g^{np}(W')$ contains an interval of length at least $|L|/3>\eps$
for large $n$, a contradiction. So $A\cap\Int K=\emptyset$.

As $A$ is infinite and $\partial K$ finite, there is $a\in A\setminus K$; let $U$ be the component of $\I\setminus K$
containing $a$. Since $A\cap U$ is a nonempty relatively open subset of $A$, it contains two points $a_1<a_2$, and since
$A$ is nowhere dense there is $t\in(a_1,a_2)\setminus A$. Then $t\notin A\cup K$ and $A\cup K$ has points on both sides
of $t$, so it is not connected.
\end{proof}

\begin{lemma}[Products of interval attractors]\label{lem:product}
Let $A_j$ and $A_{j'}$ be of type \textup{(I)}. For Lebesgue almost every $(x,y)$ with $\omega_\varphi(x)=A_j$ and
$\omega_\varphi(f_2(y))=A_{j'}$, the set $\omega_{F^2}(x,y)$ has nonempty interior.
\end{lemma}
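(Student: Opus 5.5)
The plan is to realize a large part of $\omega_{F^2}(x,y)$ as the closure of a typical orbit of an ergodic product system. Then pull the typicality back to Lebesgue almost every point using nonsingularity.

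\emph{Step 1: the measures.} By Lemma~\ref{lem:typical}(c), $B_{j'}=f_1(A_{j'})$ carries the $\chi$-invariant probability $\nu_{j'}$, which is equivalent to Lebesgue measure on $B_{j'}$. It is the average of the $\chi^{\ell'}$-exact probabilities $\nu_{j',t'}$, whose supports have nonempty interior. Let $L$ be a common multiple of $\ell$ and $\ell'$. Every power of an exact endomorphism is exact, since it has the same tail $\sigma$-algebra. Hence $\mu_{j,t}$ is exact for $\varphi^L$ and $\nu_{j',t'}$ is exact for $\chi^L$. A product of exact (indeed of weakly mixing) systems is ergodic, so $\mu_{j,t}\times\nu_{j',t'}$ is ergodic for $g=\varphi^L\times\chi^L=F^{2L}$. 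By Birkhoff's theorem applied to a countable base of open sets, the following holds for $\mu_{j,t}\times\nu_{j',t'}$-almost every $w$:
\[
\omega_g(w)\supset S_{t,t'}:=\operatorname{supp}\mu_{j,t}\times\operatorname{supp}\nu_{j',t'} .
\]
Let $G_{t,t'}$ be this set of generic points. Since $\omega_{F^{2L}}(w)\subset\omega_{F^2}(w)$ and $S_{t,t'}$ has nonempty interior, every $w\in G_{t,t'}$ has $\Int\omega_{F^2}(w)\neq\emptyset$.

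\emph{Step 2: the generic points fill $A_j\times B_{j'}$ up to a null set.} The union of the closed sets $\operatorname{supp}\mu_{j,t}$ carries full $\mu_j$-measure, and $\mu_j$ is equivalent to $\Leb$ on $A_j$. Hence a set $E\subset A_j$ of positive Lebesgue measure has positive $\mu_{j,t}$-measure for some $t$, and similarly for $B_{j'}$. It follows that a Lebesgue-positive subset of $A_j\times B_{j'}$ has positive $\mu_{j,t}\times\nu_{j',t'}$-measure for some pair $(t,t')$. Therefore $(A_j\times B_{j'})\setminus\bigcup_{t,t'}G_{t,t'}$ is Lebesgue null.

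\emph{Step 3: transport to all typical points.} Let $\omega_\varphi(x)=A_j$. This is an $\omega$-limit set with nonempty interior, hence a cycle of intervals, so some iterate $\varphi^{n}(x)$ lies in $A_j$ (as already used in Section~\ref{sec:alternating}). Likewise $\omega_\varphi(f_2(y))=A_{j'}$ gives $\varphi^{n}(f_2(y))\in A_{j'}$ for large $n$, and then $\chi^{n+1}(y)=f_1(\varphi^n(f_2(y)))\in B_{j'}$. So for some $n$ we have $F^{2n}(x,y)\in A_j\times B_{j'}$. The exceptional set is then contained in
\[
\bigcup_{n\ge0}(\varphi^n\times\chi^n)^{-1}(\mathcal N),
\]
where $\mathcal N$ is the null set of Step 2. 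Each map $\varphi^n\times\chi^n$ is a product of nonsingular maps, hence nonsingular on $\I^2$ (use Fubini). So this union is null. For every other point, $F^{2n}(x,y)\in G_{t,t'}$ for some $n,t,t'$, and $\omega_{F^2}(x,y)=\omega_{F^2}(F^{2n}(x,y))$ has nonempty interior.

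The main obstacle is Step 1, specifically the ergodicity of the product measure for the common power $F^{2L}$. I would handle it by quoting that exactness passes to powers and that exact implies weak mixing, so the product of weakly mixing systems is ergodic. A secondary point to check carefully is that nonsingularity of $\varphi^n\times\chi^n$ on the square follows from nonsingularity of each factor via Fubini.
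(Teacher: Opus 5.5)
Your proposal is correct and follows essentially the same route as the paper. In both, nonsingularity of $F^{2n}=\varphi^n\times\chi^n$ and eventual entry into $A_j\times B_{j'}$ reduce the claim to $(\mu_j\times\nu_{j'})$-almost every point; one then passes to a common power on which each $\mu_{j,t}\times\nu_{j',t'}$ is ergodic, and the ergodic theorem gives $\omega_{F^2}\supset\operatorname{supp}\mu_{j,t}\times\operatorname{supp}\nu_{j',t'}$. The one step worth tightening is in Step~2, where you pass from one-dimensional positive sets to an arbitrary Lebesgue-positive subset of $A_j\times B_{j'}$: state it as the equivalence $\mu_j\times\nu_{j'}\sim\Leb$ on $A_j\times B_{j'}$ (by Fubini), as the paper does.
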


\begin{proof}
Write $A=A_j$, $B=B_{j'}=f_1(A_{j'})$, $\mu=\mu_j$, $\nu=\nu_{j'}$, and let $\Phi=F^2=\varphi\times\chi$. By
Lemma~\ref{lem:typical}(a), $\omega_\chi(y)=B$. Since $\omega_\varphi(x)=A$ has nonempty interior, the orbit of $x$
enters the forward invariant set $A$, and similarly the orbit of $y$ enters $B$; so $\Phi^n(x,y)\in A\times B$ for all
large $n$. The map $\Phi^n$ is nonsingular on $\I^2$ for each $n$, and $\omega_\Phi(x,y)=\omega_\Phi(\Phi^n(x,y))$. Hence
it suffices to prove that $\omega_\Phi(u,v)$ has nonempty interior for Lebesgue almost every $(u,v)\in A\times B$, and,
since $\mu\times\nu$ is equivalent to Lebesgue measure on $A\times B$ (Lemma~\ref{lem:typical}(c)), for
$(\mu\times\nu)$-almost every $(u,v)$.

Let $\ell,\ell'$ be the integers of Definition~\ref{def:regular}(I) and Lemma~\ref{lem:typical}(c) for $\mu$ and $\nu$,
and $n=\ell\ell'$. Each $\mu_t$ is $\varphi^n$-invariant and exact for $\varphi^n$ (a power of an exact
endomorphism is exact), hence weakly mixing; each $\nu_{t'}$ is $\chi^n$-invariant and exact, hence ergodic. The
product of a weakly mixing and an ergodic measure-preserving system is ergodic \cite[\S1.7]{Wa}, so
$\mu_t\times\nu_{t'}$ is ergodic for $\Phi^n$. By the ergodic theorem, for $(\mu_t\times\nu_{t'})$-almost every
$(u,v)$ the $\Phi^n$-orbit of $(u,v)$ visits every open set of positive measure infinitely often, so
\[
\omega_\Phi(u,v)\supset\omega_{\Phi^n}(u,v)\supset\operatorname{supp}(\mu_t\times\nu_{t'})=
\operatorname{supp}\mu_t\times\operatorname{supp}\nu_{t'},
\]
which has nonempty interior. Since $\mu\times\nu$ is the average of the measures $\mu_t\times\nu_{t'}$, the claim holds
$(\mu\times\nu)$-almost everywhere.
\end{proof}

\begin{proof}[Proof of Theorem~\ref{thm:E}]
Let $Z\subset Z_0$ be the full-measure set obtained by removing from $Z_0$ the null exceptional sets of
Lemma~\ref{lem:product} for all pairs $(j,j')$. Let $z=(x,y)\in Z$, $A=\omega_\varphi(x)$ and $A'=\omega_\varphi(f_2(y))$,
so that $\omega_\chi(y)=f_1(A')$ and, by Proposition~\ref{prop:BLC}(b), $\pi_1(\omega_F(z))=A\cup A'$.

If $A$ is of type (P), then $\omega_\varphi(x)$ is finite; if $A'$ is of type (P), then $\omega_\chi(y)=f_1(A')$ is finite.
In both cases (a) holds, by Proposition~\ref{prop:comb} and Remark~\ref{rem:swap}. If $A$ or $A'$ is a solenoid, then
$A\cup A'$ is not connected by Lemma~\ref{lem:solenoid}: if the other one is of type (P) or (S) use part (b), and if it is
of type (I) use part (a), since an $\omega$-limit set with nonempty interior is a transitive cycle of intervals. So (b) holds. Finally, if $A$ and $A'$ are
both of type (I), then $\Int\omega_F(z)\supset\Int\omega_{F^2}(z)\neq\emptyset$ by Lemma~\ref{lem:product}, and (c) holds.

Now Conjecture~\ref{conj:LM} holds at $z$: in case (a) by Theorem~\ref{thm:A}; in case (b) both sides of the
equivalence fail, since (C1) fails and the projection of a connected set is connected; and in case (c) by
Theorem~\ref{thm:LM}. If $\omega_F(z)$ is connected, then in case (a) it is a point or a lattice by Theorem~\ref{thm:C}(1)
and Remark~\ref{rem:swap}, case (b) is impossible, and in case (c) it is a finite union of periodic rectangles by
\cite{BLC}.
\end{proof}

\section{Examples}\label{sec:examples}

\begin{example}[Lattices; {\cite[Example~1]{LM}}]\label{ex:lattice}
Let $T$ be the tent map, $f_1=T$ and $f_2=\mathrm{id}$, so that $F(x,y)=(y,T(x))$ and $\varphi=\chi=T$. Let $x_0$
have a dense orbit and $y_0$ be periodic of period $N$, with orbit $P'$. Then
$\omega_F(x_0,y_0)=(\I\times P')\cup(P'\times\I)$. Here $\omega_\chi(y_0)$ is finite, and after the swap $\sigma$ this
is a lattice with $J=K=\I$. Theorem~\ref{thm:C} shows that all lattices, with arbitrary intervals $J,K$ and arbitrary
finite sets $P\subset K$, $Q\subset J$ of the same cardinality, occur.
\end{example}

\begin{example}[An alternating comb; {\cite[Fig.~36]{LM}}]\label{ex:alternating}
Let $f$ be the transitive, not totally transitive, map of \cite[\S5]{LM} with $f([0,\frac12])=[\frac12,1]$ and
$f([\frac12,1])=[0,\frac12]$, let $F(x,y)=(y,f(x))$, let $\{y_1,y_2\}=\{\frac16,\frac56\}$ be a $2$-cycle and
$x_0$ a point with dense orbit. Then
\[
\omega_F(x_0,y_1)=\bigl([0,\tfrac12]\times\{y_1\}\bigr)\cup\bigl(\{y_1\}\times[\tfrac12,1]\bigr)\cup
\bigl([\tfrac12,1]\times\{y_2\}\bigr)\cup\bigl(\{y_2\}\times[0,\tfrac12]\bigr),
\]
which is not connected although both projections are $[0,1]$ \cite{LM}. After the swap $\sigma$ this is an
alternating comb of order $2$, and Proposition~\ref{prop:alternating} explains why (C2) must fail.
\end{example}

\begin{example}[A robust lattice]\label{ex:mixed}
In the previous examples the lattice or comb is the $\omega$-limit set of a Lebesgue null set of points. This is not
always the case. Let $f_1=\mathrm{id}$ and let $g\colon\I\to\I$ be the piecewise linear map with
\[
g(t)=\begin{cases}\tfrac14+\tfrac t2, & 0\le t\le\tfrac12,\\[2pt] 2t-\tfrac12, & \tfrac12\le t\le\tfrac34,\\[2pt]
\tfrac52-2t, & \tfrac34\le t\le1,\end{cases}
\]
and $f_2=g$, so that $F(x,y)=(g(y),x)$ and $\varphi=\chi=g$. On $[\frac12,1]$, $g$ is a tent map, exact and preserving
normalized Lebesgue measure, and every $t\in[0,\frac12)$ converges to the fixed point $\frac12$. Thus $\varphi$ is
metrically regular, with attractors $\{\frac12\}$ of type (P) and $[\frac12,1]$ of type (I). For every $x<\frac12$ and
Lebesgue almost every $y>\frac12$ we have $P=\{\frac12\}$, $J=[\frac12,1]$, $K=g(J)=J$, $Q=\{\frac12\}$, and
\[
\omega_F(x,y)=\bigl(\{\tfrac12\}\times[\tfrac12,1]\bigr)\cup\bigl([\tfrac12,1]\times\{\tfrac12\}\bigr),
\]
a lattice of order $1$ in the shape of the letter L, which is therefore the $\omega$-limit set of a set of points of
measure $\frac14$. For almost every $(x,y)$ with $x,y>\frac12$ the $\omega$-limit set is $[\frac12,1]^2$, and for
$x,y<\frac12$ it is the fixed point $(\frac12,\frac12)$. The mechanism, coexistence of a periodic and a chaotic attractor, is familiar in duopoly models \cite{ABG};
here the periodic attractor lies on the boundary of the chaotic one, which is what makes $Q\subset J$ possible.
\end{example}

\begin{table}[t]
\centering
\renewcommand{\arraystretch}{1.2}
\begin{tabular}{@{}p{4.2cm}p{3.6cm}p{3.6cm}@{}}
\toprule
& Triangular maps \cite{Gu26} & Cournot maps (this paper)\\
\midrule
Dynamics in the fibre direction & non-autonomous & autonomous ($\chi$)\\
Types of sections over an $m$-cycle & may differ (Cantor absorption) & all of the same type (Prop.~\ref{prop:rigidity})\\
Connected, with $\pi_2$ connected & --- & lattices, all realized (Thms.~\ref{thm:B}, \ref{thm:C})\\
\bottomrule
\end{tabular}
\caption{$\omega$-limit sets over periodic orbits: triangular versus Cournot maps.}
\label{tab:contrast}
\end{table}

\section{Open problems}\label{sec:problems}

\begin{problem}\label{prob:conj}
Prove or disprove Conjecture~\ref{conj:LM} at the points not covered by Theorems~\ref{thm:A} and~\ref{thm:E}, for
instance when $\omega_{f_2\circ f_1}(x)$ is a Cantor set contained in a transitive cycle of intervals of
$f_2\circ f_1$, or when $\omega_{F^2}(z)$ is the graph of a semiconjugacy as in the circles of \cite{LM}.
\end{problem}

\begin{problem}\label{prob:H}
Is the letter H, $(\{0,1\}\times\I)\cup(\I\times\{\frac12\})$, an $\omega$-limit set of some Cournot map? By
Corollary~\ref{cor:D}, at such a point both $\omega_{f_2\circ f_1}(x)$ and $\omega_{f_1\circ f_2}(y)$ would have
to be infinite, and by Theorem~\ref{thm:E} the set of such points is null when $f_2\circ f_1$ is metrically regular.
\end{problem}

\begin{problem}\label{prob:general}
Describe the $\omega$-limit sets $\omega_F(z)$ with $\omega_{f_2\circ f_1}(x)$ finite when $\pi_2(\omega_F(z))$ is not
connected. By Proposition~\ref{prop:rigidity} the vertical sections are of a common type and $\chi(M_i)=M_{i+1}$;
which families $(M_i,N_i)$ arise?
\end{problem}

\begin{problem}\label{prob:unimodal}
Is every lattice an $\omega$-limit set of a Cournot map with unimodal (or even logistic) reaction functions? Is
$f_2\circ f_1$ metrically regular for Lebesgue almost every pair of logistic reaction functions?
\end{problem}

\begin{problem}\label{prob:ndim}
Extend the results to $n$-dimensional Cournot maps
\[
F(x_1,\dots,x_n)=\bigl(f_n(x_n),f_1(x_1),\dots,f_{n-1}(x_{n-1})\bigr)
\]
\cite{GR,LS,LC}. Proposition~\ref{prop:exact} applies to chains of any length, so the construction part should carry
over; the main task is the description of the corresponding combs.
\end{problem}



\begin{thebibliography}{99}

\bibitem{ABG}
A.~Agliari, G.-I. Bischi and L.~Gardini,
\emph{Some methods for the global analysis of dynamic games represented by iterated noninvertible maps},
in: Oligopoly Dynamics. Models and Tools (T.~Puu and I.~Sushko, eds.), Springer, Berlin, 2002, 31--83.

\bibitem{ABCP}
S.~J. Agronsky, A.~M. Bruckner, J.~G. Ceder and T.~L. Pearson,
\emph{The structure of $\omega$-limit sets for continuous functions},
Real Anal. Exchange \textbf{15} (1989/90), 483--510.

\bibitem{BCL01}
F.~Balibrea, J.~S. C\'anovas and A.~Linero,
\emph{New results on topological dynamics of antitriangular maps},
Appl. Gen. Topol. \textbf{2} (2001), 51--61.

\bibitem{BLC}
F.~Balibrea, A.~Linero and J.~S. C\'anovas,
\emph{On $\omega$-limit sets of antitriangular maps},
Topology Appl. \textbf{137} (2004), 13--19.

\bibitem{BM}
M.~Barge and J.~Martin,
\emph{Chaos, periodicity, and snakelike continua},
Trans. Amer. Math. Soc. \textbf{289} (1985), 355--365.

\bibitem{BL91}
A.~M. Blokh and M.~Yu. Lyubich,
\emph{Measurable dynamics of S-unimodal maps of the interval},
Ann. Sci. \'Ecole Norm. Sup. (4) \textbf{24} (1991), 545--573.

\bibitem{BC}
L.~S. Block and W.~A. Coppel,
\emph{Dynamics in One Dimension},
Lecture Notes in Math. \textbf{1513}, Springer, Berlin, 1992.

\bibitem{BPP}
P.~Brand\~ao, J.~Palis and V.~Pinheiro,
\emph{On the finiteness of attractors for one-dimensional maps with discontinuities},
Adv. Math. \textbf{389} (2021), Paper No. 107891, 49 pp.

\bibitem{BKNS}
H.~Bruin, G.~Keller, T.~Nowicki and S.~van Strien,
\emph{Wild Cantor attractors exist},
Ann. of Math. (2) \textbf{143} (1996), 97--130.

\bibitem{Co}
A.~Cournot,
\emph{Recherches sur les principes math\'ematiques de la th\'eorie des richesses},
Hachette, Paris, 1838.

\bibitem{DM}
R.~A. Dana and L.~Montrucchio,
\emph{Dynamic complexity in duopoly games},
J. Econom. Theory \textbf{40} (1986), 40--56.

\bibitem{Gu26}
J.~L. Garc\'{\i}a Guirao,
\emph{$\omega$-limit sets of triangular maps over periodic orbits of the base: chain transitivity, rigidity and
Cantor absorption},
preprint, 2026.

\bibitem{GR}
J.~L. Garc\'{\i}a Guirao and R.~Garc\'{\i}a Rubio,
\emph{Detecting simple dynamics in Cournot-like models},
J. Comput. Appl. Math. \textbf{233} (2009), 1091--1095.

\bibitem{KS97}
S.~F. Kolyada and \v{L}.~Snoha,
\emph{Some aspects of topological transitivity---a survey},
Grazer Math. Ber. \textbf{334} (1997), 3--35.

\bibitem{KSvS}
O.~Kozlovski, W.~Shen and S.~van Strien,
\emph{Density of hyperbolicity in dimension one},
Ann. of Math. (2) \textbf{166} (2007), 145--182.

\bibitem{Le81}
F.~Ledrappier,
\emph{Some properties of absolutely continuous invariant measures on an interval},
Ergodic Theory Dynam. Systems \textbf{1} (1981), 77--93.

\bibitem{LM}
A.~Linero Bas and M.~Mu\~noz Guillermo,
\emph{A full description of $\omega$-limit sets of Cournot maps having non-empty interior and some economic
applications},
Mathematics \textbf{9} (2021), no.~4, 452.

\bibitem{LS}
A.~Linero Bas and G.~Soler L\'opez,
\emph{A note on the dynamics of cyclically permuted direct product maps},
Topology Appl. \textbf{203} (2016), 147--158.

\bibitem{LC}
Z.~Luo and Y.~Chen,
\emph{On $\omega$-limit sets of antitriangular maps},
Nat. Sci. J. Xiangtan Univ. \textbf{32} (2010), no.~4, 6--9.

\bibitem{Pu}
T.~Puu,
\emph{Nonlinear Economic Dynamics}, 4th ed.,
Springer, Berlin, 1997.

\bibitem{Ru}
S.~Ruette,
\emph{Chaos on the Interval},
University Lecture Series \textbf{67}, Amer. Math. Soc., Providence, RI, 2017.

\bibitem{Sh}
A.~N. Sharkovsky,
\emph{Continuous mapping on the set of limit points of an iteration sequence},
Ukrain. Mat. Zh. \textbf{18} (1966), 127--130 (in Russian).

\bibitem{vSV}
S.~van Strien and E.~Vargas,
\emph{Real bounds, ergodicity and negative Schwarzian for multimodal maps},
J. Amer. Math. Soc. \textbf{17} (2004), 749--782; Erratum, ibid. \textbf{20} (2007), 267--268.

\bibitem{Wa}
P.~Walters,
\emph{An Introduction to Ergodic Theory},
Graduate Texts in Math. \textbf{79}, Springer, New York, 1982.

\end{thebibliography}
\end{document}